\newtheorem{thm}{Theorem}[section]
\newtheorem{lem}[thm]{Lemma}
\theoremstyle{definition}
\newtheorem{defn}[thm]{Definition}
\theoremstyle{remark}
\newtheorem{rem}[thm]{Remark}
\numberwithin{equation}{section}
\newtheorem{example}[thm]{Example}
\renewcommand{\epsilon}{\varepsilon}
\newcommand{\fg}{\mathfrak{g}}
\newcommand{\bbR}{\mathbb R}
\newcommand{\bbN}{\mathbb N}
\newcommand{\bbZ}{\mathbb Z}
\newcommand{\N}{{\mathbb N}}
\newcommand{\Z}{{\mathbb Z}}
\newcommand{\R}{{\mathbb R}}
\newcommand{\C}{{\mathbb C}}
\newcommand{\norm}{{\|}}
\newcommand{\QED}{\hfill $\square$\vspace{2mm}}
\begin{document}

\title[Rigidity for Hamiltonian actions]
{Rigidity of Hamiltonian actions on Poisson manifolds}

\author{Eva Miranda}
\address{Departament de Matem\`{a}tica Aplicada I, Universitat Polit\`{e}cnica de Catalunya \\ EPSEB, Edifici P, Avinguda del Doctor Mara\~{n}\'{o}n, 42-44,
, Barcelona, Spain}
\email{eva.miranda@upc.edu}
\thanks{ Partially supported by the DGICYT/FEDER project MTM2009-07594: Estructuras Geometricas: Deformaciones,
Singularidades y Geometria Integral.}
\author{Philippe Monnier}
\address{Institut de Math\'{e}matiques de Toulouse, UMR 5219 CNRS, Universit\'{e}
Toulouse III, France }
\email{philippe.monnier@math.univ-toulouse.fr}
\author{Nguyen Tien Zung}
\address{Institut de Math\'{e}matiques de Toulouse, UMR 5219 CNRS, Universit\'{e}
Toulouse III, France}
\email{tienzung.nguyen@math.univ-toulouse.fr}

\dedicatory{Dedicated to the memory of Hans Duistermaat}

\keywords{momentum map, Poisson manifold, group action, Nash-Moser
theorem, Mostow-Palais theorem}

\subjclass{53D17}
\date{\today}%

\begin{abstract}

This paper is about the rigidity of compact group actions in the
Poisson context. The main result is that Hamiltonian actions of
compact semisimple type are rigid. We prove it via a Nash-Moser
normal form theorem for closed subgroups of SCI-type. This
Nash-Moser normal form has other applications to stability results
that we will explore in a future paper. We also review some
classical rigidity results for differentiable actions of compact Lie
groups and export it to the case of symplectic actions of compact
Lie groups on symplectic manifolds.

\end{abstract}
\maketitle \tableofcontents
\section{Introduction}
In this paper we prove rigidity results for Hamiltonian actions of
compact Lie groups on Poisson manifolds.

One of the main concerns in the study of the  geometry of group
actions on a differentiable manifold it to understand their
structural stability properties. When the group is compact, the
classical technique of averaging allows to prove a \lq\lq stability"
property in a neighbourhood of a fixed point: Any group action is
equivalent under conjugation to the linearized action. This result
is due to Bochner \cite{bo}. As a consequence, any two \lq\lq close"
actions of a compact Lie group are equivalent. This phenomenon is
known as local rigidity for compact group actions.

The question of global rigidity of compact group actions is a harder
matter. Differentiable actions of compact Lie groups on compact
manifolds are known to be rigid thanks to the work of Palais
\cite{Palais}. This result uses a Mostow-Palais embedding theorem.

When the manifold is endowed with additional geometrical structures
one is interested in obtaining rigidity results for the group
actions preserving those geometrical structures. This means that we
also require that there exists a  diffeomorphism that conjugates the
two actions  and that preserves the additional geometrical
structure.

In the case the manifold is symplectic, the equivariant version of
Darboux theorem \cite{weinstein} can be seen as a local rigidity for
symplectic compact group ations. The main ingredients in proving
this rigidity are Moser's path method and averaging. These
techniques allow to prove global ridigity for compact symplectic
group actions on compact symplectic manifolds. For the sake of
completeness, we include a proof of this fact in the first section
of this paper.

When the manifold is Poisson, the equivariant version of Weinstein's
splitting theorem would entail local rigidity for compact  group
actions which preserve the Poisson structure. This equivariant
version was obtained by Miranda and Zung in \cite{mirandazung2006}
under a mild additional  condition of homological type on the
Poisson structure which was called tameness of the Poisson
structure. Roughly speaking, this tameness condition allows the path
method to work in the Poisson context. There are other instances in
the literature were the path method has been used in Poisson
geometry (see for instance \cite{alek}, \cite{alek2},
\cite{ginzburgweinstein}) in all these examples the tameness
condition is implicitly assumed.

In the global  case of compact group actions on compact Poisson
manifolds, it was Viktor Ginzburg who proved rigidity by
deformations in \cite{ginzburg}. However, Ginzburg's result does not
imply rigidity because, a priori, Poisson manifolds do not form a
tame Fr\'{e}chet space (\cite{ginzburg} and \cite{karshon}) and hence we
cannot use the  \lq\lq infinitesimal rigidity implies rigidity"
argument.

 In any event, it is still interesting to explore
whether there is an \lq\lq infinitesimal stability" result given by
the vanishing of a cohomology group attached to the geometric
problem. Any rigidity problem can be viewed as a problem about
openness of orbits in an appropriate setting. For example, a group
action can be viewed as a morphism from the Lie group to the group
of diffeomorphisms of the manifold. Then two close actions are
equivalent  if they are conjugated by a diffeomorphism and,
therefore, if they are on the same orbit of the group of
diffeomorphisms acting on the set of actions. In the case the
actions preserve an additional structure, we can require that this
diffeomorphism and the actions in question also respect this given
structure. This associated first cohomology group has coefficients
in an infinite-dimensional space (usually the set of vector fields
respecting the given additional structure) and it morally stands for
the quotient of the \lq\lq tangent space" to the variety of actions
and the \lq\lq tangent space" to the orbit. If the \lq\lq set of
actions" has good properties (either manifolds or tame Fr\'{e}chet
spaces), we can deduce stability from infinitesimal stability via an
inverse function theorem (Nash-Moser inverse function in the case of
tame Fr\'{e}chet spaces). An inverse function theorem of Nash-Moser type
was laid down by Richard Hamilton in his foundational paper
\cite{Hamilton}. Many examples and useful criteria to determine
whether a set is a tame Fr\'{e}chet manifold are given in
\cite{Hamilton}. However it is difficult to apply these criteria in
order to prove  that a certain given set is tame Fr\'{e}chet. For
instance, in the Poisson case, as observed by Ginzburg in
\cite{ginzburg}, it is difficult to establish whether the set of
Poisson vector fields constitute a Fr\'{e}chet tame space. If this were
the case, we could apply Nash-Moser inverse theorem straightaway to
conclude structural stability or rigidity from infinitesimal
stability. When the criteria given by Hamilton are hard or
impossible to apply, we may still be able to apply Newton's
iteration method used by Hamilton in \cite{Hamilton} if the sets
considered still satisfy some appropriate properties (SCI spaces and
SCI actions). This infinitesimal stability result then leads to a
stability result even if the \lq\lq tameness" condition is hard to
explore for the set of vector fields preserving the given structure.

We follow this philosophy to prove a rigidity result for Hamiltonian
compact group actions on Poisson manifolds and the proof is based on
the Nash-Moser method and cohomological considerations.

When $M$ is a Poisson manifold, a Hamiltonian action of a Lie group
$G$ is given by a momentum map $\mu:M\longrightarrow \frak g^*$
where $\frak g$ is the Lie algebra of $G$ and $\mu$ is a Poisson map
with respect to the standard Poisson structure on $\frak g^*$. When
$G$ is semisimple and compact, we call those actions, Hamiltonian
actions of compact semisimple type. In order to prove the rigidity
result for Hamiltonian actions, we first prove an infinitesimal
stability result which lies on the vanishing of the first cohomology
group of the Chevalley-Eilenberg complex associated to the
representation of $\frak g$ on the set of smooth functions given by
the momentum map. Our proof of infinitesimal stability is based on
the techniques used by Conn \cite{conn0} and \cite{conn} to prove
linearization of Poisson structures whose linear part is semisimple
of compact type. We can then prove rigidity using an iteration
process similar to that used by Conn in \cite{conn};in turn, this
iterative process is inspired by Newton's fast convergence method
used by Hamilton  to prove Moser's-Nash theorem \cite{Hamilton}.
Proving convergence of this iteration requires to have a close look
at many estimates and carefully check out all the steps. Instead, we
propose here a proof that smartly hides this iteration process via a
Nash-Moser normal form theorem. This Normal form theorem is a
refinement of a previous normal form theorem established by the two
last authors of this paper in \cite{monnierzung2004}. The condition
on the Lie algebra to be semisimple of compact type is essential for
the proof to work. Examples of non-linearizability (and in
particular non-rigidity) for semisimple actions of non-compact type
were already given by Guillemin and Sternberg
\cite{guilleminsternberg}. Recently the Hamiltonian case has been
considered by the first author of this paper in a short note
\cite{mirandaanalytic}.

The Nash-Moser normal form theorem used in the proof of the results
of this paper seems to have many applications to structural
stability problems concerning other geometrical structures
(foliations, group actions, etc...). We plan to explore these
applications in a future work.

{\bf{Organization of this paper}} In Section 2 we recall some basic
facts about rigidity of group actions and we give a proof for
rigidity of compact symplectic group actions on a compact symplectic
manifold which uses the path method. In the section 3 we review some
known facts about rigidity in the Poisson case: infinitesimal
rigidity, rigidity by deformations and local rigidity for tame
Poisson structures. In section 4 we state the main results of this
paper: local and global rigidity for Hamiltonian actions of
semisimple actions of compact type. The semilocal  result also holds
when we replace a fixed point for the action by an invariant compact
neighbourhood and we replace the norms by distance to the invariant
manifold. In section 5 we prove an infinitesimal rigidity result for
Poisson actions. In order to prove this result we prove the
vanishing of a first Chevalley-Eilenberg cohomology group associated
to a Hamiltonian action. In section 6 we give the proofs for local,
semilocal and global rigidity for Hamiltonian actions of compact
semisimple type. The proof uses a  Nash-Moser normal form theorem
(Theorem \ref{thm:Nash-Moser}) for
SCI-spaces and SCI-actions that we state and explain here. In the first
appendix we prove the Nash-Moser normal form theorem for SCI spaces
. In the second appendix we prove some
technical lemmas needed in the proof of the main rigidity theorems
in this paper (essentially to verify that our spaces fulfill the
technical assumptions established in the Nash-Moser type theorem
\ref{thm:Nash-Moser}).

{\bf Acknowledgements:} We are thankful to the referee for useful
insights and comments. We also thank the financial support by
Universit\'{e} de Toulouse III and Centre de Recerca Matem\`{a}tica
during the Special Programme \lq\lq Geometric Flows. Equivariant
problems in Symplectic Geometry". The first author thanks Romero
Solha for detecting some misprints in a previous version of this
paper.

\section{The symplectic case}

Let $G$ be a  Lie group and let $\rho:G\times M\longrightarrow M$ be
a smooth action on a smooth manifold $M$. For each $g\in G$, we
denote by $\rho(g)$  the diffeomorphism defined by
$\rho(g)(x):=\rho(g,x), x\in M$.

\begin{defn} Given two group actions, $\rho_0$ and $\rho_1$, we say that
they are $C^k$-equivalent if there  exists of a $C^k$-diffeomorphism
conjugating the two actions, i.e,
$\rho_0(g)\circ\phi=\phi\circ\rho_1(g)$.
\end{defn}

In the case when we are given a local smooth action, $\rho$, and a
fixed point $p$ for the action we can define the linearized action,
$\rho^{(1)}$ in a neighbourhood of $p$  by the formula
$\rho^{(1)}(g,x)=d_p(\rho(g))(x)$ for $g\in G$ and $x\in M$.

For compact Lie groups  we have the following two equivalence
results in the local and global settings:

\begin{thm}[Bochner \cite{bo}]\label{thm:bo} A local smooth action with a fixed point
is locally equivalent to the linearized action.
\end{thm}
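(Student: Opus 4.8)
The plan is to prove Bochner's theorem by the classical averaging trick. Fix a chart centred at the fixed point $p$ identifying a neighbourhood of $p$ in $M$ with a neighbourhood $V$ of the origin in the vector space $E=T_pM$. In this chart each $\rho(g)$ becomes a local diffeomorphism of $E$ fixing $0$, and the linearized action becomes the linear action $g\mapsto A_g:=d_0\rho(g)\in GL(E)$. Since $G$ is compact and $g\mapsto\rho(g)$ is continuous, after shrinking $V$ we may assume that $\rho(g)(x)$ is defined for all $g\in G$ and all $x$ in a neighbourhood $U\subset V$ of $0$; and because $\{A_g:g\in G\}$, hence also $\{A_g^{-1}:g\in G\}$, is a compact subset of $GL(E)$, we may further assume (shrinking $U$) that $A_g^{-1}\rho(g)(x)$ stays inside $V$ for all $g\in G$ and $x\in U$.

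First I would define the candidate conjugation by averaging against the normalized Haar measure $dg$ of $G$:
\[
\phi(x)\ :=\ \int_G A_g^{-1}\,\rho(g)(x)\;dg .
\]
Differentiating under the integral sign (legitimate since the integrand is smooth in $x$ and $G$ is compact) and using $\rho(g)(0)=0$, $d_0\rho(g)=A_g$, one gets $d_0\big(A_g^{-1}\circ\rho(g)\big)=A_g^{-1}A_g=\mathrm{id}_E$ for every $g$, hence $d_0\phi=\int_G\mathrm{id}_E\,dg=\mathrm{id}_E$. By the inverse function theorem, $\phi$ is therefore a local diffeomorphism near $p$ fixing $p$.

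It then remains to check that $\phi$ intertwines $\rho$ and $\rho^{(1)}$. For $h\in G$, using the cocycle identity $\rho(g)\circ\rho(h)=\rho(gh)$ and the chain rule $A_{gh}=A_gA_h$,
\[
\phi\big(\rho(h)(x)\big)=\int_G A_g^{-1}\,\rho(gh)(x)\,dg
=\int_G A_h\,A_{g'}^{-1}\,\rho(g')(x)\,dg'
=A_h\,\phi(x),
\]
where in the middle step I substituted $g'=gh$ and used invariance of Haar measure together with $A_g^{-1}=A_{g'h^{-1}}^{-1}=A_hA_{g'}^{-1}$. Since $\rho^{(1)}(h)$ is the linear map $A_h$, this says $\phi\circ\rho(h)=\rho^{(1)}(h)\circ\phi$ on a neighbourhood of $p$, which is precisely the asserted local equivalence.

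The routine points are the differentiation under the integral sign and the bookkeeping of domains; the only genuinely delicate issue is the domain question, namely securing a single neighbourhood of $p$ on which all the $\rho(g)$ are simultaneously defined and on which $\phi$ takes values, which is exactly where compactness of $G$ enters (it makes the relevant families of diffeomorphisms and of linear maps "uniformly bounded"). If an honestly $G$-invariant domain is wanted afterwards, one replaces $U$ by $\bigcap_{g\in G}\rho(g)(U)$.
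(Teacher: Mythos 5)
Your proposal is correct and is precisely the classical averaging argument that the paper itself attributes to this result (the introduction notes that ``the classical technique of averaging'' yields Bochner's linearization); the paper gives no proof of its own, citing \cite{bo}. The conjugation $\phi(x)=\int_G A_g^{-1}\rho(g)(x)\,dg$, the computation $d_0\phi=\mathrm{id}$, and the change of variables in the Haar integral are all carried out correctly, and you rightly flag the only delicate point, namely the uniform domain of definition secured by compactness of $G$.
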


\begin{thm}[Palais \cite{Palais}] Two $C^k$-close group actions ($k\geq 1$) of a compact Lie group on a
compact manifold are equivalent by a diffeomorphism of class $C^k$
which belongs to the arc-connected component of the
identity.\end{thm}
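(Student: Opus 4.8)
The plan is to deduce global rigidity from the Mostow--Palais equivariant embedding theorem together with an averaging argument. First I would recall that, since $G$ is compact and $M$ is compact, there is a finite-dimensional orthogonal representation $G\to O(N)$ and a smooth $G$-equivariant embedding $j\colon M\hookrightarrow \R^N$ with respect to the given action $\rho_0$; the inner product on $\R^N$ may be assumed $G$-invariant after averaging it with Haar measure. The image $j(M)$ is then a compact $G$-invariant submanifold of $\R^N$, and by averaging a tubular neighbourhood one obtains a $G$-invariant open set $U\supset j(M)$ together with a $G$-equivariant smooth retraction $\pi\colon U\to j(M)$, namely the nearest-point projection for the invariant metric. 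Note that $U$ and $\pi$ depend only on $\rho_0$.

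Now let $\rho_1$ be $C^k$-close to $\rho_0$. The embedding $j$ need not be $\rho_1$-equivariant, but I would correct it by averaging: set $\tilde j(x)=\int_G g^{-1}\cdot j(\rho_1(g)(x))\,dg$, using Haar measure and the linear $G$-action on $\R^N$. By construction $\tilde j$ is $\rho_1$-equivariant, i.e. $\tilde j(\rho_1(g)(x))=g\cdot \tilde j(x)$. Since $j$ is $\rho_0$-equivariant it satisfies $j(x)=\int_G g^{-1}\cdot j(\rho_0(g)(x))\,dg$, so $\tilde j-j$ is controlled in the $C^k$ norm by the $C^k$-distance between $\rho_0$ and $\rho_1$; hence for $\rho_1$ sufficiently $C^k$-close to $\rho_0$ the map $\tilde j$ is $C^k$-close to $j$. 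Because being an embedding is a $C^1$-open condition on a compact manifold, $\tilde j$ is again an embedding, and $\tilde j(M)\subset U$.

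Then I would set $\phi:=j^{-1}\circ\pi\circ\tilde j\colon M\to M$. Equivariance of $\pi$ and of $\tilde j$ gives $\pi(\tilde j(\rho_1(g)(x)))=\pi(g\cdot\tilde j(x))=g\cdot\pi(\tilde j(x))$, and since the linear action on $j(M)$ corresponds to $\rho_0$ under $j$, this yields $\phi\circ\rho_1(g)=\rho_0(g)\circ\phi$ for all $g\in G$, so $\phi$ conjugates the two actions. As $\pi\circ j=\mathrm{id}$ and $\tilde j$ is $C^k$-close to $j$, the map $\phi$ is $C^k$-close to $\mathrm{id}_M$, hence a $C^k$-diffeomorphism; and writing $\phi(x)=\exp_x(X(x))$ for a small vector field $X$ relative to a fixed metric on $M$, the path $t\mapsto\bigl(x\mapsto\exp_x(tX(x))\bigr)$ is an arc in $\mathrm{Diff}^k(M)$ from the identity to $\phi$, so $\phi$ lies in the arc-connected component of the identity.

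The main obstacle I expect is not any single step of the averaging/projection machinery --- each is routine --- but rather invoking the Mostow--Palais equivariant embedding theorem correctly and checking that all the closeness claims are quantitatively consistent: that the averaging operator $\rho_1\mapsto\tilde j$ is continuous from the $C^k$ topology on actions to the $C^k$ topology on maps $M\to\R^N$, that $U$ and $\pi$ are fixed in advance so that ``$\tilde j(M)\subset U$'' is genuinely an open condition on $\rho_1$, and that the resulting $\phi$ is globally a diffeomorphism rather than merely a local one. Compactness of both $G$ and $M$ is precisely what makes all of these estimates uniform.
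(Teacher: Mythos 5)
The paper does not actually prove this statement---it is quoted directly from Palais \cite{Palais} with no argument given---so there is no internal proof to compare against; your proposal is essentially Palais's original argument, deducing equivalence from the Mostow--Palais equivariant embedding theorem by averaging the embedding over the perturbed action and projecting back through an invariant tubular neighbourhood. The construction is correct: the $\rho_1$-equivariance of $\tilde j$ (by the change of variable $u=gh$ in the Haar integral), the conjugation identity for $\phi=j^{-1}\circ\pi\circ\tilde j$, and the $C^k$-closeness estimates all check out on a compact manifold with $k\geq 1$, and this is precisely the route the paper implicitly relies on, as it states the Mostow--Palais embedding theorem immediately afterwards.
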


We also have the following \lq\lq global" linearization theorem due
to Mostow-Palais for compact manifolds (which is also valid for
neighbourhoods of compact invariant submanifolds).

\begin{thm}[Mostow-Palais, \cite{mostow} \cite{Palais}]
Let $\rho$ be an action of  a compact Lie group $G$ act on a compact
manifold $M$  then there exists an equivariant embedding of $M$ on a
finite-dimensional vector space $E$ such that via the embedding the
 the action of $\rho$ on $M$ becomes part of a linear action
$\rho_0$ on $E$.
\end{thm}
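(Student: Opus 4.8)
The plan is to realize $M$ equivariantly inside a space of smooth functions on itself. First I would apply the Whitney embedding theorem to obtain a (not necessarily equivariant) smooth embedding $j=(f_1,\dots,f_N)\colon M\hookrightarrow\R^N$ with $f_i\in C^\infty(M)$. The relevant representation is the action of $G$ on $C^\infty(M)$ by $(g\cdot f)(x)=f(\rho(g^{-1})x)$, and the key fact I need is that its \emph{$G$-finite vectors} — those contained in some finite-dimensional $G$-invariant subspace — are dense in $C^\infty(M)$ for the $C^1$ topology (in fact for every $C^k$ topology).

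To prove this density I would combine averaging over $G$ with the Peter--Weyl theorem. For $\psi\in C(G)$ set $(\psi*f)(x)=\int_G\psi(g)\,(g\cdot f)(x)\,dg$ with respect to normalized Haar measure; this is again smooth. A short computation using bi-invariance of Haar measure gives $h\cdot(\psi*f)=(L_h\psi)*f$, where $(L_h\psi)(k)=\psi(h^{-1}k)$. When $\psi$ is a matrix coefficient of a finite-dimensional representation $\pi$ of $G$, the left translates $L_h\psi$ span a subspace of dimension at most $\dim\pi$, so the $G$-orbit of $\psi*f$ spans a finite-dimensional subspace of $C^\infty(M)$; hence $\psi*f$, and every finite sum of such convolutions, is $G$-finite. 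Now choose a smooth approximate identity $\chi_n$ on $G$ and, using that matrix coefficients are uniformly dense in $C(G)$ by Peter--Weyl, finite sums $\psi_n$ of matrix coefficients with $\|\psi_n-\chi_n\|_{L^1(G)}\to 0$; since $g\mapsto g\cdot f$ is continuous from the compact group $G$ into every $C^k(M)$, we get $\psi_n*f\to f$ in every $C^k$ norm. In particular each $f_i$ can be replaced by a $G$-finite $\tilde f_i$ so $C^1$-close to $f_i$ that $\tilde j=(\tilde f_1,\dots,\tilde f_N)$ is still an embedding of the compact manifold $M$.

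Next take a finite-dimensional $G$-invariant subspace $V\subset C^\infty(M)$ containing all the $\tilde f_i$, put $E=V^*$ with $\rho_0$ the contragredient representation of $G$, and define $\Phi\colon M\to E$ by $\Phi(x)=\mathrm{ev}_x$, where $\mathrm{ev}_x(f)=f(x)$ for $f\in V$. Then $\Phi(\rho(g)x)(f)=f(\rho(g)x)=(g^{-1}\cdot f)(x)=\big(\rho_0(g)\,\mathrm{ev}_x\big)(f)$, so $\Phi$ intertwines $\rho$ and $\rho_0$. Moreover, composing $\Phi$ with the linear projection $E\to\R^N$ dual to the family $\tilde f_i\in V$ recovers $\tilde j$, which separates points and tangent vectors; since $M$ is compact, $\Phi$ is therefore an equivariant embedding onto a closed submanifold of $E$ invariant under the linear action $\rho_0$, which is what is claimed. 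For the stated variant in which $M$ is replaced by a neighbourhood of a compact invariant submanifold, one first shrinks to a $G$-invariant tubular neighbourhood (average a Riemannian metric over $G$ and use its exponential map) and then runs the same argument on it.

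The step I expect to be the main obstacle is the $C^1$-density of $G$-finite vectors: one must check that the convolution smoothing genuinely produces \emph{finite-dimensional} $G$-orbits — not merely finite isotypic projections — and that it converges in the $C^1$, and not just $L^2$, topology; compactness of $M$ then enters again through the elementary fact that a sufficiently good $C^1$-approximation of an embedding is again an embedding.
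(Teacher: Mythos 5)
The paper does not actually prove this statement; it is quoted as a classical result with references to Mostow and Palais, so there is no in-paper argument to compare against. Your proof is correct and is essentially the standard Peter--Weyl argument from those references: perturb a Whitney embedding into a tuple of $G$-finite functions via convolution with matrix coefficients, use $C^1$-openness of embeddings of a compact manifold, and realize $M$ inside the dual of the resulting finite-dimensional invariant subspace via evaluation. All the key verifications are in place -- the identity $h\cdot(\psi*f)=(L_h\psi)*f$, the finite-dimensionality of the span of left translates of a matrix coefficient, the $C^k$-convergence $\psi_n*f\to f$ via $\|\psi_n-\chi_n\|_{L^1}\to 0$ together with continuity of $g\mapsto g\cdot f$ into $C^k(M)$, and the equivariance and immersivity of $x\mapsto\mathrm{ev}_x$ -- so I see no gap.
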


Now assume that we are given a symplectic manifold $(M,\omega)$ and
a symplectic group action $\rho$.  We can  prove ridigity (local and
global) imposing that the equivalence also preserves $\omega$.

The first result in this direction is equivariant Darboux theorem
(\cite{wei1} and \cite{chaperon}). For a complete proof of
equivariant Darboux theorem we refer either to the book by M.
Chaperon \cite{chaperon} or to Appendix A in the book
\cite{dufourzung}.

 We can use the same ideas to prove the following global rigidity
 result. This proof was already included in the short note
 \cite{mirandatenerife}. We include it here for the sake of
 completeness.

\begin{thm} Let $\rho_0$ and $\rho_1$ be two $C^2$-close symplectic
actions of a compact Lie group $G$ on a compact symplectic manifold
$(M,\omega)$. Then  they can be made equivalent by conjugation via a
symplectomorphism.
\end{thm}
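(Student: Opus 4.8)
The plan is to peel off the symplectic structure first, apply Palais' theorem to the underlying smooth actions, and then \emph{correct} the resulting conjugating diffeomorphism by an equivariant Moser path so that it becomes a symplectomorphism without losing equivariance.

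First I would invoke Palais' theorem (quoted above): since $\rho_0$ and $\rho_1$ are $C^2$-close smooth actions of the compact group $G$ on the compact manifold $M$, there is a $C^2$-diffeomorphism $\phi$, lying in the arc-connected component of the identity, with $\rho_0(g)\circ\phi=\phi\circ\rho_1(g)$ for all $g\in G$, i.e. $\phi^{-1}\circ\rho_0(g)\circ\phi=\rho_1(g)$. Transporting the form, $\omega_1:=(\phi^{-1})^*\omega$ is then invariant under $\rho_0$, while $\omega_0:=\omega$ is $\rho_0$-invariant by hypothesis; so we are reduced to a single compact group action $\rho_0$ preserving two symplectic forms $\omega_0,\omega_1$. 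Because $\phi$ is $C^2$-close to the identity, $\omega_1$ is $C^1$-close to $\omega_0$, and because $\phi$ is isotopic to the identity, $[\omega_1]=[\omega_0]$ in de Rham cohomology.

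Next I would run the equivariant path method. Write $\omega_1-\omega_0=d\alpha$ and average over $G$: replacing $\alpha$ by $\int_G\rho_0(g)^*\alpha\,dg$ makes $\alpha$ $G$-invariant without changing $d\alpha$, since both $\omega_i$ are $\rho_0$-invariant. Put $\omega_t=\omega_0+t(\omega_1-\omega_0)$ for $t\in[0,1]$; these forms are closed, $G$-invariant, and — as $\omega_1-\omega_0$ is $C^0$-small and $M$ is compact — nondegenerate for all $t$. Let $X_t$ be the unique time-dependent vector field with $\iota_{X_t}\omega_t=-\alpha$; it is $G$-invariant because $\alpha$ and $\omega_t$ are. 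By compactness of $M$ its flow $\psi_t$ is defined on all of $[0,1]$, commutes with every $\rho_0(g)$, and satisfies $\frac{d}{dt}(\psi_t^*\omega_t)=\psi_t^*\bigl(\mathcal{L}_{X_t}\omega_t+\omega_1-\omega_0\bigr)=\psi_t^*\bigl(d\iota_{X_t}\omega_t+d\alpha\bigr)=0$, so $\psi:=\psi_1$ fulfills $\psi^*\omega_1=\omega_0$ and $\psi\circ\rho_0(g)=\rho_0(g)\circ\psi$. I would then set $\Phi:=\psi^{-1}\circ\phi$. Since $\psi^{-1}$ commutes with $\rho_0$, we get $\Phi^{-1}\circ\rho_0(g)\circ\Phi=\phi^{-1}\circ\rho_0(g)\circ\phi=\rho_1(g)$, that is $\rho_0(g)\circ\Phi=\Phi\circ\rho_1(g)$; and $\Phi^*\omega_0=\phi^*\bigl((\psi^{-1})^*\omega_0\bigr)=\phi^*\omega_1=\omega_0$, using $(\psi^{-1})^*\omega_0=\omega_1$ (from $\psi^*\omega_1=\omega_0$) and $\phi^*\omega_1=\phi^*(\phi^{-1})^*\omega_0=\omega_0$. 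Hence $\Phi$ is a symplectomorphism of $(M,\omega)$ conjugating $\rho_1$ to $\rho_0$.

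The only point needing genuine care is the propagation of the closeness hypotheses through Palais' theorem: one must be sure that $C^2$-closeness of the two actions produces a conjugating $\phi$ that is itself $C^2$-close to the identity, so that $\omega_1=(\phi^{-1})^*\omega$ is a $C^1$-small — hence still nondegenerate — perturbation of $\omega$ and the straight-line path $\omega_t$ is admissible; this is exactly where the quantitative content sits. Everything else — the averaging giving an invariant primitive, the $G$-invariance of $X_t$ and of its flow, the cohomological triviality of $\omega_1-\omega_0$ — is the standard equivariant Moser bookkeeping and uses only compactness of $G$ and of $M$; in particular no connectedness hypothesis on $G$ is required, since one averages the primitive one-form rather than arguing on cohomology.
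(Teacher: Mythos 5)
Your proposal is correct and follows essentially the same route as the paper: Palais' theorem to reduce to a single $\rho_0$-invariant pair of cohomologous symplectic forms, then the equivariant Moser path method on the linear path $\omega_t$, with the same caveat (present in both arguments) that the quantitative content lives in Palais' theorem producing a conjugating diffeomorphism close enough to the identity that $\omega_t$ stays nondegenerate. The only cosmetic difference is that you average the primitive $1$-form before solving $\iota_{X_t}\omega_t=-\alpha$, whereas the paper averages the vector field $X_t$ and then observes it satisfies the equation for the averaged form $\alpha_G$ — these are equivalent since $\omega_t$ is $\rho_0$-invariant.
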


\begin{proof} Let $\varphi$ be the diffeomorphism (given by Palais'
Theorem) that takes the action $\rho_0$ to  $\rho_1$. We denote by
$\omega_0$ the symplectic structure $\omega$ and by $\omega_1$,
$\omega_1=\varphi^*(\omega_0)$. It remains to show that we can take
$\omega_1$ to $\omega_0$ in an invariant way with respect to the
$\rho_0$-action.

Since $\varphi$ belongs to the arc-connected component of the
Identity, we can indeed construct an homotopy $\varphi_t$ such that
$\varphi_1=\varphi$ and $\varphi_0=id$.

 We can use this homotopy to
define a de Rham homotopy operator:

$$Q\omega=\int_0^1\varphi_t^*(i_{v_t}\omega) dt$$

\noindent where $v_t$ is the t-dependent vector field defined by the
isotopy $\varphi_t$.

Via this formula, we can prove (see for instance pages 110 and 111
of the book by Guillemin-Sternberg  \cite{geometricasymptotics})
that $\omega_1$ belongs to the same
 cohomology class
 as $\omega_0$ and we can write  $\omega_1=\omega_0+d\alpha$
 for a $1$-form $\alpha$ (indeed $\alpha=Q\omega$ where $Q$ is the de Rham homotopy operator).

We first consider the linear path of symplectic structures
$$\omega_t=t\omega_1+(1-t) \omega_0, \quad  t\in [0,1].$$
It is a path of symplectic structures since $\omega_0$ and
$\omega_1$ are close.
 Let $X_t$ be the vector field,
$$i_{X_t}\omega_t=-\alpha.$$

Now we consider the averaged vector field with respect to a Haar
measure on $G$:

$$X_t^G=\int_G {\rho(g)}_*(X_t) d\mu.$$

Since the diffeomorphism $\phi$ conjugates the actions $\rho_0$ and
$\rho_1$ which both preserve the initial symplectic form $\omega_0$,
then the path of symplectic forms $\omega_t$ is $\rho_0$-invariant.
This vector field satisfies the equation,

$$i_{X_t^G}\omega_t=-\int_G{\rho(g)}^*(\alpha)d\mu.$$

The new invariant 1-form $\alpha_G=\int_G{\rho(g)}^*(\alpha)d\mu$
fulfills the cohomological equation $\omega_1=\omega_0+d\alpha_G$
due to $\rho_0$-invariance of the forms $\omega_t$.

Let $\phi_t^G$ be defined by the equation,
\begin{equation}
X_t^G(\phi_t^G(q))= {\partial \phi_t^G \over \partial t} (q).
\end{equation}

Observe that there is a loss of one degree of differentiability with
respect to the degree of differentiability of $\varphi$. Therefore,
in order to be able to guarantee the existence of $\phi_t$, we need
degree of differentiability at least $1$ and therefore we need
$\varphi$ to be of degree at least $2$. Palais theorem guarantees
that if the initial to action are $C^2$-close the conjugating
diffeomorphism $\varphi$ is of class $C^2$.

The compactness of $M$ guarantees the existence of  $\phi_t^G$,
$\forall t\in [0,1]$.
 Then $\phi^G_t$ commutes with the action of
 $G$ given by $\rho_0$ and satisfies
 ${\phi^G}_t^{*}(\omega_t)=\omega_0$.

Therefore the time-1-map $\phi_1$ of $X_t^G$ takes $\omega_1$ to
$\omega_0$ in an equivariant way.

\end{proof}

\begin{rem}
The path method also works very well for contact structures
\cite{gray}. Indeed in the local case a linearization result for
compact contact group actions was already established by Marc
Chaperon \cite{chaperon}. In the global case, we can use the
techniques of Gray \cite{gray} and reproduce the same ideas of the
proof of the symplectic case.

\end{rem}

\begin{rem}
This proof uses the path method to conclude and therefore requires
$C^2$-closeness of the initial two actions $\rho_0$ and $\rho_1$.
However, we do not know any example of close $C^1$-symplectic
actions which can be shown that are not equivalent. The theorem
might still hold  for $C^1$-close actions but this method of proof
is not powerful enough to guarantee this (current methods in
symplectic topology could be useful to this end).

\end{rem}

\section{Rigidity by deformations and linearization}

Let $(P,\Pi)$ stand for a Poisson manifold and let $\rho$ stand for
a Poisson action of a compact Lie group $G$.

 Ginzburg proved in
\cite{ginzburg} that Poisson actions are  rigid by deformations.

\begin{thm}[Ginzburg]
Let $\rho_t$ be a family of $\Pi$-preserving actions smoothly
parameterized by $t\in[0,1]$. Then there exists a family of Poisson
diffeomorphisms $\phi_t:M\longrightarrow M$ which sends $\rho_0$ to
$\rho_t$ such that $\rho_t(g)x=\phi_t(\rho_0(g)\phi_t^{-1}(x))$ for
all $x\in M$, $g\in G$ and $\phi_0=Id$.
\end{thm}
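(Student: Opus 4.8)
The plan is to run Moser's path method, but inside the group of Poisson diffeomorphisms of $M$: instead of solving a cohomological equation for forms as in the symplectic proof above, one solves a coboundary equation in the continuous group cohomology of $G$, which is where compactness of $G$ and Haar averaging enter. The essential point is that this averaging loses no derivatives, so---in contrast with the rigidity theorems proved later in the paper---no tameness property or Nash--Moser argument is needed here; this is exactly why one obtains rigidity \emph{by deformations} but, a priori, not rigidity from $C^k$-closeness.

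\emph{Step 1: reduction to a homological equation.} I would look for $\phi_t$ as the flow of a time-dependent vector field $X_t$, i.e.\ $\partial_t\phi_t = X_t\circ\phi_t$ with $\phi_0 = \mathrm{Id}$, and set $\psi_t(g) := \phi_t^{-1}\circ\rho_t(g)\circ\phi_t$. Since $\psi_0(g) = \rho_0(g)$, it suffices to choose $X_t$ so that $\partial_t\psi_t(g)\equiv 0$ for every $g\in G$. Expanding $\partial_t\psi_t(g)$ with the composition rule for velocity fields of isotopies, one finds that $\partial_t\psi_t(g)\equiv 0$ for all $g$ is equivalent to the coboundary equation
\[
(\rho_t(g))_*X_t - X_t \;=\; c_t(g), \qquad g\in G,
\]
where $c_t\colon G\to\mathfrak X(M)$ is the continuous group $1$-cocycle obtained from $\partial_t\rho_t$ (for the $G$-module structure $g\cdot Y := (\rho_t(g))_*Y$); the cocycle identity $c_t(gh) = c_t(g) + g\cdot c_t(h)$ follows by differentiating $\rho_t(gh) = \rho_t(g)\circ\rho_t(h)$ in $t$. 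Differentiating the relation $\rho_t(g)^*\Pi = \Pi$ in $t$ gives $\mathcal L_{c_t(g)}\Pi = 0$, so $c_t$ takes values in the $G$-submodule $\mathfrak X_\Pi(M)$ of $\Pi$-preserving vector fields. Thus the problem reduces to: find $X_t\in\mathfrak X_\Pi(M)$, depending smoothly on $t$, with $\delta X_t = c_t$.

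\emph{Step 2: solving it by averaging.} Since $G$ is compact, $H^1\big(G;\mathfrak X_\Pi(M)\big)=0$ in continuous group cohomology, Haar averaging providing an explicit contracting homotopy for the cochain complex; applied to $c_t$ this yields a primitive $X_t$. Because the averaging formula is an integral over $G$ of data depending smoothly on $t$ (through $\rho_t$ and $c_t$), the family $X_t$ is smooth in $t$, and it lies in $\mathfrak X_\Pi(M)$ automatically, since the latter is a closed $G$-submodule. (This parallels the averaging step in the symplectic proof above.)

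\emph{Step 3: integration.} As $M$ is compact, $X_t$ integrates on $[0,1]$ to a smooth isotopy $\phi_t\colon M\to M$ with $\phi_0 = \mathrm{Id}$, and each $\phi_t$ is a Poisson diffeomorphism because $X_t\in\mathfrak X_\Pi(M)$. By the choice of $X_t$ we have $\partial_t\psi_t(g)=0$, hence $\psi_t(g) = \rho_0(g)$, that is, $\rho_t(g)x = \phi_t\big(\rho_0(g)\phi_t^{-1}(x)\big)$ for all $x\in M$, $g\in G$, $t\in[0,1]$. The only steps needing real care are Step 1---correctly identifying the cocycle $c_t$ and the coboundary equation with the right signs---and the verification in Step 2 that the averaged primitive is \emph{simultaneously} $\Pi$-preserving and smooth in $t$; the last step uses compactness of $M$ to integrate the isotopy on all of $[0,1]$.
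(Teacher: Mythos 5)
Your proposal is correct and is essentially the argument the paper attributes to Ginzburg: the paper itself gives no proof (it only cites \cite{ginzburg} and remarks that one first establishes infinitesimal rigidity via the vanishing of a group-cohomology class and then applies the homotopy method), and your reduction to the coboundary equation $\delta X_t = c_t$, its solution by Haar averaging in $\mathfrak X_\Pi(M)$, and the integration of the resulting time-dependent Poisson vector field on the compact manifold is precisely that scheme. No gaps; the sign conventions in Step 1 and the closedness of the Poisson vector fields under averaging are handled correctly.
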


\begin{rem} In \cite{ginzburg}  it was first proved that they are infinitesimally rigid  (here
meaning vanishing of certain cohomology group associated to a group
representation) and then used the homotopy method to prove rigidity
by deformations.

In the Poisson case the phenomenon \lq\lq infinitesimal rigidity
implies rigidity by deformations" is observed. However, if we would
like to prove \lq\lq infinitesimal rigidity implies rigidity" we
would need to check that the space of $G$-Poisson actions  is \lq\lq
tame Fr\'{e}chet" (as observed by Ginzburg on \cite{ginzburg}) but this
seems out of reach.

\end{rem}

In the  case  when we are not given  a path of actions connecting
the two actions, the first attempt is to try to use Moser's path
method as we did for symplectic actions.

Unlike the symplectic case, the path method does not seem to work so
well for Poisson structures. We need to impose additional
hypothesis.

We recall the definition of tameness given in
\cite{mirandazung2006}:

\begin{defn}Let $(P^n,\Pi)$ be a smooth Poisson manifold and $p$ a point in $P$. We will say that $\Pi$ is \emph{tame}
at $p$ if for any pair $X_t,Y_t$  of germs of smooth Poisson vector
fields near $p$ which are tangent to the symplectic foliation of
$(P^n,\Pi)$ and which may depend smoothly on a (multi-dimensional)
parameter $t$, then the function $\Pi^{-1}(X_t,Y_t)$ is smooth and
depends smoothly on $t$.
\end{defn}

Note that in this definition, the term $\Pi^{-1}(X_t,Y_t)$ is well-defined because
on a leaf of the symplectic foliation, the Poisson structure corresponds to a symplectic
form.

In \cite{mirandazung2006} this tameness condition is studied several
examples of tameness and non-tameness are given. In particular, all
2 and 3-dimensional Lie algebras are tame Poisson structures and all
semisimple Lie algebras of compact type are tame.

For these Poisson structures,  we have the following theorem (see
\cite{mirandazung2006}) which is an equivariant version of Weinstein's splitting theorem \cite{weinstein}.

\begin{thm}[Miranda-Zung] \label{thm:main}
Let $(P^n,\Pi)$ be a smooth Poisson manifold, $p$ a point of $P$,
$2k = \rm rank \ \Pi (p)$, and $G$ a compact Lie group which acts on
$P$ in such a way that the action preserves $\Pi$ and fixes the
point $p$. Assume that the Poisson structure $\Pi$ is tame at $p$.
Then there is a smooth canonical local coordinate system
$(x_1,y_1,\dots,x_{k},y_{k},$ $ z_1,\dots, z_{n-2k})$ near $p$, in
which the Poisson structure $\Pi$ can be written as
\begin{equation}
\Pi = \sum_{i=1}^k \frac{\partial}{\partial x_i}\wedge
\frac{\partial}{\partial y_i} + \sum_{ij}
f_{ij}(z)\frac{\partial}{\partial z_i}\wedge
\frac{\partial}{\partial z_j},
\end{equation}
with $f_{ij}(0)=0$, and in which the action of $G$ is linear and preserves the subspaces
$\{x_1 = y_1 = \hdots x_k = y_k = 0\}$ and $\{z_1 = \hdots =
z_{n-2k} = 0\}$.
\end{thm}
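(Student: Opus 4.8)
The plan is to run the classical Weinstein splitting argument but to perform every step $G$-equivariantly, using averaging over the compact group $G$ wherever a non-canonical choice is made, and to invoke the tameness hypothesis precisely at the one point where the path/homotopy method needs to divide a smooth family of Poisson vector fields by $\Pi$. First I would set up the rank-$2k$ datum: since $2k = \rank \Pi(p)$, the characteristic distribution has a $2k$-dimensional symplectic leaf direction at $p$. I would choose a function $x_1$ with $dx_1(p) \neq 0$ transverse to the leaf (this can be arranged equivariantly by averaging a suitable linear coordinate, using that the $G$-action is linearizable near the fixed point $p$ by Bochner's Theorem \ref{thm:bo}), set $y_1$ to be $\{x_1, \cdot\}$-related via its Hamiltonian flow, and iterate Weinstein's inductive construction of canonical pairs $(x_i, y_i)$. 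The upshot of the (non-equivariant) splitting theorem is a local decomposition $P \cong (\R^{2k}, \sum \partial_{x_i}\wedge\partial_{y_i}) \times (N, \Pi_N)$ with $\Pi_N(0)=0$; the task is to make the splitting diffeomorphism commute with $G$.

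The key steps, in order: (1) By Bochner's theorem linearize the $G$-action near $p$, so $G$ acts linearly on a chart and we may assume $G \subset GL(T_pP)$ preserving $\Pi(p)$; decompose $T_pP = V_{symp} \oplus V_{ker}$ as a $G$-invariant direct sum where $V_{symp}$ carries the nondegenerate part of $\Pi(p)$ and $V_{ker} = \ker \Pi(p)$. (2) Construct the canonical coordinates on the symplectic factor $G$-equivariantly: one builds the map to $\R^{2k}$ by an equivariant version of Weinstein's procedure, averaging the generating functions over $G$ at each stage so that the resulting symplectic submanifold through $p$ tangent to $V_{symp}$ and its symplectic-orthogonal complement are both $G$-invariant. (3) This produces two close Poisson structures on the chart agreeing at $p$ and having the same symplectic leaf through $p$; apply the homotopy (Moser path) method along the linear segment $\Pi_t$ between them. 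The Moser argument requires solving $i_{X_t}(\text{something}) = \Pi_t^{-1}(\dot\Pi_t$-type data$)$ for a smooth $t$-family of vector fields — and here is exactly where tameness at $p$ enters: it guarantees that the relevant expression $\Pi^{-1}(X_t, Y_t)$ of Poisson vector fields tangent to the foliation is smooth and smoothly $t$-dependent, so the path method closes. (4) Average the resulting vector field $X_t$ over $G$ to get $X_t^G$ (as in the symplectic proof above), use compactness/flatness at $p$ to integrate it to a local $G$-equivariant isotopy $\phi_t$, and conclude that $\phi_1$ conjugates the two Poisson structures equivariantly, yielding the stated normal form in which $G$ acts linearly preserving both $\{x_i=y_i=0\}$ and $\{z_j=0\}$.

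The main obstacle is step (3): making the Moser/homotopy argument work in the Poisson (as opposed to symplectic) setting, because $\Pi$ is degenerate and one cannot simply invert it. This is the whole reason the tameness hypothesis is imposed, and the delicate point is to check that the Poisson vector fields produced at each stage are genuinely tangent to the symplectic foliation and depend smoothly on the parameter $t$, so that $\Pi^{-1}$ applied to them is a well-defined smooth object — only then does tameness deliver the smooth solution $X_t$ of the homological equation. The equivariance overlay (steps (1), (2), (4)) is comparatively routine averaging, with the only mild care being that averaging must be done on objects (functions, vector fields, forms) for which it commutes with the operations used, and that one stays within a $G$-invariant neighborhood of $p$ where all flows are defined.
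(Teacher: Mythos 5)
First, note that the paper you are reading does not actually prove Theorem \ref{thm:main}: it is imported from \cite{mirandazung2006} and stated without proof, so the only internal guidance is the remark that the tameness condition is what ``allows the path method to work in the Poisson context.'' Your overall strategy --- Bochner linearization of the action, an equivariant run of Weinstein's inductive construction, a Moser-type homotopy argument in which tameness is invoked exactly to make $\Pi^{-1}$ of foliation-tangent Poisson vector fields smooth, and a final averaging of the Moser vector field over $G$ --- is the strategy of the cited proof, and you have located the role of tameness correctly.

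There are, however, two genuine gaps. The decisive one is in your step (3): you propose to run Moser ``along the linear segment $\Pi_t$ between'' two close Poisson structures. Unlike closedness of $2$-forms, the Jacobi identity is not a convex condition: for $\Pi_t=t\Pi_1+(1-t)\Pi_0$ one has $[\Pi_t,\Pi_t]=2t(1-t)[\Pi_0,\Pi_1]$, so the segment consists of Poisson structures only when $\Pi_0$ and $\Pi_1$ are compatible, which fails for generic close Poisson bivectors. The symplectic argument of Section 2 survives because the linear path is taken on the \emph{forms} $\omega_t=t\omega_1+(1-t)\omega_0$, which are automatically closed; the Poisson analogue must likewise interpolate the leafwise symplectic forms $\Pi_i^{-1}$ along a common symplectic foliation and then verify that the interpolated foliated $2$-form is again the inverse of a smooth Poisson bivector --- and it is already at the stage of defining $\Pi^{-1}$ smoothly that tameness is indispensable, not only when solving the homological equation. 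As written, your path $\Pi_t$ does not exist. The second gap is in step (2): averaging the ``generating functions'' of Weinstein's construction does not preserve the canonical relations $\{x_i,y_j\}=\delta_{ij}$ and $\{x_i,x_j\}=\{y_i,y_j\}=0$, since these are nonlinear constraints on the coordinate functions and averaging is linear; this is precisely the obstruction that forces one to abandon naive averaging of the splitting data and to average instead at the infinitesimal level (the Moser vector field), as you do in step (4). Steps (2) and (3) therefore need to be merged and reorganized around a correctly defined path before the plan closes.
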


This result  implies local rigidity for compact Poisson group
actions.

By using Conn's linearization theorem \cite{conn} for semisimple Lie
algebra's of compact type, we can prove an equivariant linearization
theorem also contained in \cite{mirandazung2006}.

\begin{thm}[Miranda-Zung] \label{thm:EquivLin}
Let $(P^n,\Pi)$ be a smooth Poisson manifold, $p$ a point of $P$,
$2r = \rm rank \ \Pi (p)$, and $G$ a compact Lie group which acts on
$P$ in such a way that the action preserves $\Pi$ and fixes the
point $p$. Assume that the linear part of transverse Poisson
structure of $\Pi$ at $p$ corresponds to a semisimple compact Lie
algebra $\mathfrak k$. Then there is a smooth canonical local
coordinate system $(x_1,y_1,\dots, x_{r},y_{r}, z_1,\dots,
z_{n-2r})$ near $p$, in which the Poisson structure $\Pi$ can be
written as
\begin{equation}
\Pi = \sum_{i=1}^r \frac{\partial}{\partial x_i}\wedge
\frac{\partial}{\partial y_i} + {1 \over 2}\sum_{i,j,k} c^{k}_{ij}
z_k \frac{\partial}{\partial z_i}\wedge \frac{\partial}{\partial
z_j},
\end{equation}
where $c_{ij}^k$ are structural constants of $\mathfrak k$, and in
which the action of $G$ is linear and preserves the subspaces $\{x_1
= y_1 = \hdots x_r = y_r = 0\}$ and $\{z_1 = \hdots = z_{n-2r} =
0\}$.
\end{thm}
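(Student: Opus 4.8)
The plan is to combine three ingredients: the equivariant splitting theorem (Theorem \ref{thm:main}), Conn's linearization theorem for semisimple Lie algebras of compact type, and an averaging argument to make the linearizing diffeomorphism $G$-equivariant. First I would observe that a semisimple Lie algebra of compact type automatically gives a tame Poisson structure (stated in the excerpt), so Theorem \ref{thm:main} applies: there is a local coordinate system $(x_1,y_1,\dots,x_r,y_r,z_1,\dots,z_{n-2r})$ near $p$ in which $\Pi = \sum_{i=1}^r \partial_{x_i}\wedge\partial_{y_i} + \sum_{ij} f_{ij}(z)\,\partial_{z_i}\wedge\partial_{z_j}$ with $f_{ij}(0)=0$, the $G$-action is linear, and it preserves the two subspaces $\{x=y=0\}$ and $\{z=0\}$. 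Thus the transverse Poisson structure $\Pi_N = \sum_{ij} f_{ij}(z)\,\partial_{z_i}\wedge\partial_{z_j}$ on the slice $N=\{x=y=0\}$ is a genuine Poisson structure vanishing at the origin, whose linear part is the Lie–Poisson structure of $\mathfrak k$, and on which $G$ acts linearly by Poisson automorphisms.

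Next I would apply Conn's theorem to $(N,\Pi_N)$: since the linear part corresponds to the semisimple compact Lie algebra $\mathfrak k$, there is a local diffeomorphism $\psi$ of $N$ fixing the origin with $d_0\psi = \mathrm{id}$ that conjugates $\Pi_N$ to its linear part $\frac12\sum_{i,j,k} c^k_{ij} z_k\,\partial_{z_i}\wedge\partial_{z_j}$. The diffeomorphism $\psi$ need not commute with the $G$-action, so I would average it over $G$ in the style of Bochner/Moser: because the $G$-action on $N$ is linear and preserves both $\Pi_N$ and its linear part, the averaged map $\psi_G(z) = \int_G \rho(g)^{-1}\bigl(\psi(\rho(g) z)\bigr)\, d\mu(g)$ is again a local diffeomorphism fixing $0$ with identity differential, is $G$-equivariant, and still conjugates $\Pi_N$ to its linear part — the last point using that both Poisson structures are $G$-invariant so each $\rho(g)^{-1}\circ\psi\circ\rho(g)$ is itself a linearization, and linearity of the action lets the integral be pulled through. (Alternatively one can invoke directly the equivariant version of Conn's theorem, but averaging is the cleaner self-contained route.) Then I extend $\psi_G$ to a neighborhood of $p$ in $P$ by acting as the identity in the symplectic $(x,y)$-directions, i.e. $\Psi(x,y,z) = (x,y,\psi_G(z))$; since $\psi_G$ is Poisson for the transverse part and the splitting is a direct product of Poisson structures, $\Psi$ conjugates $\Pi$ to $\sum \partial_{x_i}\wedge\partial_{y_i} + \frac12\sum c^k_{ij} z_k\,\partial_{z_i}\wedge\partial_{z_j}$. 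Because $\psi_G$ commutes with the linear $G$-action on $N$ and $G$ acts trivially (or at worst block-diagonally, preserving $\{z=0\}$) on the $(x,y)$ block by the conclusion of Theorem \ref{thm:main}, the composite map $\Psi$ is $G$-equivariant and preserves the two coordinate subspaces, giving the asserted normal form.

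The main obstacle is the equivariance of the linearizing change of coordinates on the transverse slice: Conn's theorem produces a linearization but gives no control over its interaction with the group action, and one must check carefully that averaging does not destroy the property of being a Poisson map to the linear structure. This works precisely because the $G$-action is \emph{linear} in the splitting coordinates — so conjugation by $\rho(g)$ sends the linear Poisson structure to itself, each conjugate of Conn's map is again a linearization of the same germ, and the set of such linearizations (maps with identity $1$-jet conjugating $\Pi_N$ to its linear part) is convex enough that the Haar average stays in it. A secondary point to be careful about is that Conn's theorem is a \emph{convergent} normal-form result on a genuine Poisson manifold, so I should record that the transverse structure extracted from Theorem \ref{thm:main} is an honest Poisson structure on the slice $N$ and not merely a formal object, which is immediate from the product form of $\Pi$ in the splitting coordinates.
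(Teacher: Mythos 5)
Your overall route --- equivariant splitting via Theorem \ref{thm:main}, then Conn's theorem on the transverse slice, then extension by the identity in the symplectic directions --- is the one the paper has in mind (the proof itself is deferred to \cite{mirandazung2006}, but the text says precisely that the result follows by combining Conn's linearization theorem with the equivariant splitting). The step that does not work as stated is the averaging. The condition that $\psi$ conjugate $\Pi_N$ to its linear part, $\psi_*\Pi_N=\Pi_{\mathrm{lin}}$, is \emph{quadratic} in the first derivatives of $\psi$, so the set of such $\psi$ is not stable under convex combinations, and in particular not under the Haar average $\int_G\rho(g)^{-1}\circ\psi\circ\rho(g)\,d\mu(g)$. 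Already on $(\bbR^2,\partial_x\wedge\partial_y)$ the average of two area-preserving germs with identity $1$-jet (say the time-$1$ flows of $X_H$ and $X_{-H}$ for an $H$ vanishing to third order at $0$) is generically not area-preserving. So the claim that ``the set of linearizations is convex enough that the Haar average stays in it'' is exactly the assertion that fails, and it is the whole difficulty of the theorem: Bochner's averaging linearizes group actions because the intertwining relation is linear in the unknown map, but being a Poisson map is not. The repair used in \cite{mirandazung2006} is to build equivariance into Conn's scheme rather than impose it afterwards: the homotopy operators of the relevant Chevalley--Eilenberg complex can be chosen to commute with the compact group action (averaging is done at the level of cochains, where the relevant identities \emph{are} linear), so each corrective diffeomorphism in the iteration is the time-$1$ flow of a $G$-invariant Hamiltonian with respect to a $G$-invariant Poisson structure, hence commutes with the linear action, and the limiting diffeomorphism is equivariant by construction.

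A secondary point: Theorem \ref{thm:main} requires $\Pi$ to be tame \emph{at $p$}, whereas the hypothesis here only concerns the linear part of the transverse Poisson structure. To see that $\Pi$ is tame at $p$ you should first apply the non-equivariant Weinstein splitting together with Conn's theorem to conclude that $\Pi$ is locally isomorphic to $\sum_i\partial_{x_i}\wedge\partial_{y_i}$ plus the Lie--Poisson structure of $\mathfrak k$, which is tame; tameness is invariant under local isomorphism, and only then does the equivariant splitting theorem apply. This is easy to fix but should be said explicitly.
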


In the paper \cite{mirandazung2006}, we also asserted that in the
case of Hamiltonian actions, we could use Nash-Moser techniques to
prove the existence of an equivariant splitting Weinstein theorem
regardless of the \lq\lq tameness condition" on the Poisson
structure. This result is a corollary of our local rigidity theorem
for the semisimple actions of compact type.

\section{Rigidity of Hamiltonian group actions on Poisson manifolds}

 In this
paper, we will show that if two Hamiltonian group actions of compact
semisimple type on a Poisson manifold are close then they are
equivalent. We do it in the following settings:

\begin{itemize}
\item Local: For two close Hamiltonian actions of a compact Lie group in the neighbourhood of
a  fixed point.
\item Semilocal: For two close Hamiltonian actions  in a neighbourhood of
an invariant compact manifold.
\item Global: For two close Hamiltonian actions  in a
compact manifold.

\end{itemize}

\subsection{Norms in the space of actions}

In this section, we clarify what we mean in this paper by \lq\lq
close".

An action $\rho:G\times M\longrightarrow M$ of a Lie group $G$ on a
manifold $M$ is a morphism from $G$ to the group of diffeomorphisms
$Diff(M)$. In particular, we can view this action as an element in
Map$(G\times M, M)$ and use the $C^k$-topology there.

In this paper we consider Hamiltonian actions so we can define the
topology by using the associated momentum maps: If two momentum maps
$\mu_1:M\longrightarrow \frak g^*$ and $\mu_2:M\longrightarrow \frak
g^*$ are close then the two Hamiltonian actions are close.

In the local case, the manifold is $M=(\mathbb R^n,0)$.
For each positive
number $r > 0$, denote by $B_r$ the closed ball of radius $r$ in
$\R^n$ centered at $0$. In order  to make estimates, we will use the
following norms on the vector space of smooth functions on $B_r$:
\begin{equation}
\label{eqn:absolute-norm1} \|F\|_{k,r} := \sup_{|\alpha| \leq k}
\sup_{z \in B_r} |D^\alpha F(z)|
\end{equation}
for  each smooth function $F: B_r \longrightarrow \mathbb{R}$, where the sup runs over all
partial derivatives of degree $|\alpha|$ at most $k$. More
generally, if $F=(F_1,\hdots,F_m)$ is a smooth mapping from $B_r$ to
$\bbR^m$ we can define
\begin{equation}
\label{eqn:absolute-norm3} \|F\|_{k,r} := \sup_i \sup_{|\alpha| \leq
k} \sup_{z \in B_r} |D^\alpha F_i(z)|\,.
\end{equation}
Similarly, for a vector field $X = \sum_{i=1}^{n} X_i \partial /
\partial x_i$ on $B_r$ we put
\begin{equation}
\label{eqn:absolute-norm2} \|X\|_{k,r} := \sup_{i} \sup_{|\alpha|
\leq k} \sup_{z \in B_r} |D^\alpha X_i(z)|\,.
\end{equation}

Finally, let us fix a basis $\{\xi_1,\ldots,\xi_m\}$ of $\fg$. If
$\mu : B_r\longrightarrow \fg^\ast$ is a momentum map (with respect to
some Poisson structure) then the map $\xi_i\circ\mu$ is a
smooth function on $B_r$ for each $i$. We then define the
$\mathcal{C}^k$-norms of $\mu$ on $B_r$ as
\begin{equation}
\| \mu \|_{k,r} := \max_i \{ \|\xi_i \circ \mu \|_{k,r} \}\,.
\end{equation}

In the global case we use, for all $k$, a
 norm $\| \, \|_k$ which give the standard $C^k$-topology in the space of
mappings (see \cite{Abraham-Robbin}).

In the semilocal case: Let $N$ be the compact invariant submanifold
of $M$, we can consider a tubular neighbourhood  of the submanifold
$N$ in the manifold $M$,  $\mathcal{U}_\varepsilon(N)=\{x\in M, d(x,N)\leq \varepsilon\}$,
 where $d(x,N)$ is the distance between a
given point to  the manifold with respect to some fixed Riemannian
metric on $M$. To define a topology in the set of Hamiltonian
actions that have $N$ as invariant manifold, it suffices to replace
the ball by a $\varepsilon$-tubular neighbourhood,

That is to say:
 For each positive
number $\varepsilon> 0$, we consider the following norms:
\begin{equation}
\label{eqn:absolute-norm1} \|F\|_{k,\varepsilon} := \sup_{|\alpha| \leq k}
\sup_{z \in \overline{\mathcal{U}_\varepsilon(N)}} |D^\alpha F(z)|.
\end{equation}

 We will also need
Sobolev norms (to be defined below) which turn our spaces into
(pre)Hilbert-spaces.

\subsection{The results}

In this section we state the main theorems of this paper. These
theorems are rigidity results for Hamiltonian actions in the local,
semilocal and global setting respectively.

\subsubsection{ Local Rigidity}

When we consider two close Hamiltonian actions in a neighbourhood of
a fixed point, we can prove the following:

\begin{thm}\label{thm:maintheoremlocal}
Consider a Poisson structure $\{\,,\,\}$ defined on a neighbourhood
$U$ of 0 in $\mathbb{R}^n$ containing a closed ball $B_R$ of radius
$R>0$ and an Hamiltonian action of a Lie group $G$ on $U$ for which 0 is a fixed point.
Suppose that the Lie algebra $\fg$ of $G$ is semisimple  of compact type and
the Hamiltonian action is defined by the momentum map
$\lambda : U\longrightarrow \fg^\ast$.

There exist a positive integer $l$ and two positive real numbers
$\alpha$ and $\beta$ (with $\beta <1<\alpha$) such that, if $\mu$ is
another momentum map on $U$ with respect to the same Poisson
structure and Lie algebra, satisfying
\begin{equation}
\| \lambda - \mu \|_{2l-1,R} \leq \alpha \quad {\mbox { and }} \quad \| \lambda - \mu \|_{l,R}   \leq \beta
\end{equation}
then, there exists a diffeomorphism $\psi$ of class
$C^k$, for all $k\geq l$, on the closed ball $B_{R/2}$ such that
$\mu\circ \psi =\lambda$. \label{thm:NormalFormMomentMap}
\end{thm}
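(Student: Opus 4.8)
The plan is to deduce this local rigidity statement from the Nash-Moser normal form theorem for SCI-spaces and SCI-actions (Theorem \ref{thm:Nash-Moser}), together with the infinitesimal rigidity result (vanishing of the relevant first Chevalley-Eilenberg cohomology group) announced in the introduction. The object we want to put into normal form is the momentum map $\lambda$, viewed as the ``model'' of a family of momentum maps with respect to the fixed Poisson structure $\{\,,\,\}$ on $B_R \subset \mathbb{R}^n$; the ``perturbed'' object is $\mu$. First I would set up the scale of Banach spaces: for each $k$, the space of $C^k$ momentum maps $B_r \to \fg^*$ with the norms $\|\cdot\|_{k,r}$ defined above, together with the group of $C^k$ Poisson diffeomorphisms of $(B_r,\{\,,\,\})$ fixing $0$, acting on momentum maps by $\mu \mapsto \mu \circ \psi$. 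The content of ``SCI-space'' and ``SCI-action'' is exactly a list of interpolation/smoothing and tame-estimate axioms on these data; checking that the space of momentum maps and the action by Poisson diffeomorphisms satisfy them is relegated to the second appendix, so here I would only invoke it.

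The heart of the matter is the \emph{homological equation}: given a momentum map $\mu$ close to $\lambda$, one must solve, up to a quadratically small error, for a Poisson vector field $X$ (the infinitesimal generator of the correcting diffeomorphism) such that the first-order variation of $\mu\circ\exp(X)$ cancels $\mu - \lambda$. Writing $\mu = \lambda + \sigma$ with $\sigma$ small, the linearized operator sends $X$ to the ``Lie-derivative-type'' expression $\mathcal{L}_X\lambda$, i.e.\ $X$ acting on the components $\xi_i\circ\lambda$; solving $\mathcal{L}_X\lambda = -\sigma$ modulo the image of the Chevalley-Eilenberg differential is precisely where the hypothesis that $\fg$ is semisimple of compact type enters. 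Here I would adapt Conn's argument (\cite{conn0},\cite{conn}): using an invariant average over a maximal compact group and the vanishing $H^1(\fg; C^\infty) = 0$ for semisimple compact $\fg$, one produces a solution $X$ with a \emph{tame estimate}, $\|X\|_{k,r'} \lesssim \|\sigma\|_{k+d,r}$ on a slightly smaller ball, losing a fixed number $d$ of derivatives and a fixed amount of radius. This tame estimate for the homotopy operator is exactly hypothesis needed to feed Theorem \ref{thm:Nash-Moser}.

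With the homological operator and its tame right inverse in hand, the Nash-Moser scheme takes over: one runs a Newton iteration $\mu_0 = \mu$, $\mu_{n+1} = \mu_n \circ \psi_n$ where $\psi_n = \exp(X_n)$ and $X_n$ solves the homological equation for $\mu_n - \lambda$, interposing smoothing operators at each step and shrinking the radius at a summable rate $r_n \downarrow R/2$. The quadratic convergence, together with the standard Nash-Moser bookkeeping of the two norms $\|\cdot\|_{2l-1,R}$ (high norm, allowed to grow mildly) and $\|\cdot\|_{l,R}$ (low norm, must stay small), yields the thresholds $\alpha > 1 > \beta$ and the integer $l$ in the statement: if $\|\lambda-\mu\|_{2l-1,R}\le\alpha$ and $\|\lambda-\mu\|_{l,R}\le\beta$, the composed diffeomorphisms $\psi_n\circ\cdots\circ\psi_0$ converge in every $C^k$, $k\ge l$, on $B_{R/2}$ to a diffeomorphism $\psi$ with $\mu\circ\psi = \lambda$. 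Since all of this is packaged inside Theorem \ref{thm:Nash-Moser}, the actual proof reduces to: (i) identify $(\lambda,\mu)$ with the data of that theorem, (ii) verify the SCI axioms (appendix), (iii) verify the cohomological hypothesis via the infinitesimal rigidity section. The main obstacle — and the step deserving the most care — is (iii): obtaining the homotopy operator for the Chevalley-Eilenberg complex \emph{with estimates uniform in the parameter and with controlled loss of derivatives and radius}, because the naive solution coming from abstract cohomology vanishing carries no such control; this is where Conn-type analysis, as opposed to soft homological algebra, is indispensable.
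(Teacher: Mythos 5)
Your proposal follows essentially the same route as the paper: cast the problem in the SCI framework with momentum maps as the tensors, Poisson diffeomorphisms acting on the right by composition, the Conn-type homotopy operator for the Chevalley--Eilenberg complex (made tame via the Mostow--Palais/Bochner linearization and Sobolev estimates) supplying the correcting Hamiltonian vector field, and the affine version of Theorem \ref{thm:Nash-Moser} packaging the smoothed Newton iteration with the two-norm bookkeeping. The only cosmetic difference is that the paper's quadratic-error step rests on the explicit identity $\delta(\mu-\lambda)(\xi_i\wedge\xi_j)=-\{(\mu-\lambda)_i,(\mu-\lambda)_j\}$ together with the $H^2$-level homotopy operator, rather than on a generic ``solve the linearized equation modulo quadratic error'' statement, but this is the same mechanism you describe.
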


\begin{rem}
One may think that the two conditions $\| \lambda - \mu \|_{2l-1,R} \leq \alpha$  and
$\| \lambda - \mu \|_{l,R}   \leq \beta$ can be compressed, just keeping the first one. In fact, these two
conditions (and the two constants $\alpha$ and $\beta$) don't have the same interpretation.
The constant $\beta$ has to be small because we want the two moment maps $\lambda$ and $\mu$ to be close with
respect to the $C^l$-topology (small degree of differentiability) whereas the constant $\alpha$ can be large (not too
much) because we just want to have a kind of control of the differentiability. This difference is more explicit when we
look at the inequalities in Lemma \ref{lem:NM1} where $\zeta(f^d)$ plays somehow the same role as the difference $\mu-\lambda$.
\end{rem}

\begin{rem}
The integer $l$ can be determined from some inequalities  in the
proof of the iteration process, see (\ref{cond:lsespsilon}) and (\ref{cond:lsespsilonBis}) .
\end{rem}

\begin{rem}
It is possible to state a $C^p$-version of this theorem, assuming that
 $\lambda$ is of class $C^{2p-1}$ ($2p-1 \geq 2l-1$).
\end{rem}

As a corollary we obtain an equivariant Weinstein's splitting
theorem for Hamiltonian actions of compact semisimple type which
doesn't need the tameness condition but which requires the Poisson
action to be Hamiltonian (compare with Theorem \ref{thm:main}).

\begin{thm}
Let $(P^n,\Pi)$ be a smooth Poisson manifold, $p$ a point of $P$,
$2k = \rm rank \ \Pi (p)$, and $G$ a semisimple compact Lie group
which acts on $P$ in a Hamiltonian way fixing the point $p$.  Then
there is a smooth canonical local coordinate system
$(x_1,y_1,\dots,x_{k},y_{k},$ $ z_1,\dots, z_{n-2k})$ near $p$, in
which the Poisson structure $\Pi$ can be written as
\begin{equation}
\Pi = \sum_{i=1}^k \frac{\partial}{\partial x_i}\wedge
\frac{\partial}{\partial y_i} + \sum_{ij}
f_{ij}(z)\frac{\partial}{\partial z_i}\wedge
\frac{\partial}{\partial z_j},
\end{equation}
with $f_{ij}(0)=0$, and in which the action of $G$ is linear and preserves the subspaces
$\{x_1 = y_1 = \hdots x_k = y_k = 0\}$ and $\{z_1 = \hdots =
z_{n-2k} = 0\}$.
\end{thm}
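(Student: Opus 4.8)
The plan is to deduce this equivariant splitting theorem directly from the local rigidity result Theorem~\ref{thm:maintheoremlocal} by comparing a given Hamiltonian action to a well-chosen model Hamiltonian action near the fixed point $p$. First I would work in a local chart centered at $p$ and invoke the (non-equivariant) Weinstein splitting theorem to write $\Pi$, after a local diffeomorphism, in the split form $\sum_{i=1}^k \partial/\partial x_i \wedge \partial/\partial y_i + \sum_{ij} f_{ij}(z)\,\partial/\partial z_i \wedge \partial/\partial z_j$ with $f_{ij}(0)=0$; at this stage the $G$-action is some Hamiltonian action fixing $0$, with momentum map $\lambda$, but is not yet linear and need not respect the splitting. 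The goal is then to produce a further Poisson diffeomorphism making the action linear and splitting-compatible.

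The key step is to construct a \emph{model} momentum map $\mu$ for the \emph{same} Poisson structure $\Pi$ which (i) generates a linear $G$-action preserving the two subspaces $\{x=y=0\}$ and $\{z=0\}$, and (ii) is arbitrarily $C^k$-close to $\lambda$ on a small ball. For (i), one takes the linearization $\rho^{(1)}$ of the $G$-action at $0$; since $G$ is compact one may average the chart so that $\rho^{(1)}$ is orthogonal, and since the action preserves $\Pi$ and fixes $0$, its linear part preserves the linear part of $\Pi$ at $0$, which forces (after a further linear change) the invariance of the symplectic directions $\{z=0\}$ and the transverse directions $\{x=y=0\}$ — this is exactly the linear-algebra input already used in Theorem~\ref{thm:main} and Theorem~\ref{thm:EquivLin}. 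A momentum map $\mu$ for this linear action with respect to $\Pi$ exists because $\fg$ is semisimple (so $H^1(\fg)=H^2(\fg)=0$, guaranteeing existence and essential uniqueness of the comomentum), and one can choose $\mu$ to agree with $\lambda$ at $0$ and to first order; hence on a sufficiently small ball $B_R$ the difference $\lambda-\mu$ is as small as desired in any $\|\cdot\|_{2l-1,R}$ norm — here one shrinks $R$ and uses Taylor's theorem, exploiting that $\lambda$ and $\mu$ have the same $1$-jet at the fixed point. This arranges the hypotheses $\|\lambda-\mu\|_{2l-1,R}\le\alpha$ and $\|\lambda-\mu\|_{l,R}\le\beta$ of Theorem~\ref{thm:maintheoremlocal}.

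Applying Theorem~\ref{thm:maintheoremlocal} then yields a $C^k$ (in fact smooth, after a standard bootstrap, or one works in the smooth category throughout) diffeomorphism $\psi$ on $B_{R/2}$ with $\mu\circ\psi=\lambda$. Since $\mu$ is a momentum map for $\Pi$ and the rigidity statement is proved within the class of momentum maps for the fixed Poisson structure, $\psi$ is a Poisson diffeomorphism. Pulling everything back by $\psi$, the original Hamiltonian $G$-action becomes the linear action generated by $\mu$, which preserves the required subspaces, while $\Pi$ retains its split form (possibly with new functions $f_{ij}$ still vanishing at $0$, since $\psi$ fixes $0$); composing with the earlier Weinstein chart gives the claimed canonical coordinates. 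The main obstacle I expect is technical rather than conceptual: verifying cleanly that the model linear action genuinely preserves the splitting subspaces and that $\mu$ can be chosen with the same $1$-jet as $\lambda$ (so that $\|\lambda-\mu\|$ can be made small by shrinking $R$ rather than merely bounded) — this is where the semisimplicity and compactness of $\fg$ are used in an essential way, and it mirrors the linear normal-form arguments of \cite{mirandazung2006}. A minor additional point is to note that $\psi$ fixes $0$, which follows because $\psi$ conjugates actions both fixing $0$ together with $\mu(0)=\lambda(0)$ and the linearization being faithful enough to pin down the fixed point.
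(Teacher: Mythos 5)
Your overall strategy --- produce a nearby \emph{linear} Hamiltonian model action for the same Poisson structure and then invoke the local rigidity Theorem \ref{thm:maintheoremlocal} --- is exactly the strategy of the paper. But there is a genuine gap in how you build the model. After a \emph{non-equivariant} Weinstein splitting chart, the linearized action $\rho^{(1)}$ at $0$ preserves only the $1$-jet of $\Pi$ at $0$, not $\Pi$ itself: the functions $f_{ij}(z)$ are not linear, and nothing forces them to be invariant under the linear $z$-part of $\rho^{(1)}$. An action that does not preserve $\Pi$ cannot be Hamiltonian with respect to $\Pi$, so the ``model momentum map $\mu$ for the same Poisson structure generating a linear action'' that you posit need not exist. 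The Whitehead-lemma argument ($H^1(\fg)=H^2(\fg)=0$) produces a (co)momentum map only for an action already known to be a Poisson action for $\Pi$; it cannot conjure one for a non-Poisson action. This is precisely the point at which the splitting coordinates must be chosen equivariantly, and it is what the paper's proof is organized around: it takes a $G$-invariant transversal $N$ to the symplectic leaf $S$ through $p$ (via an invariant Riemannian metric), applies equivariant Darboux on $S$ and Bochner's Theorem \ref{thm:bo} on $N$ (using Dirac's formula to check that the restricted action on $N$ is still Hamiltonian), and only then forms the product of the two linearized restrictions --- a linear action that manifestly preserves the split $\Pi$ and is manifestly Hamiltonian because each factor is. Your proposal needs this equivariant preparation before the linear model can legitimately be fed into Theorem \ref{thm:maintheoremlocal}.

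A secondary point: even granting the model, ``agreeing with $\lambda$ to first order at $0$'' plus Taylor's theorem does not make $\|\lambda-\mu\|_{l,R}$ small as $R\to 0$ for $l\ge 2$, since derivatives of order $\ge 2$ of a function vanishing to second order do not shrink with the radius; some rescaling or dilation argument is needed to meet the hypothesis $\|\lambda-\mu\|_{l,R}\le\beta<1$. The paper is equally terse on this step, so this is not specific to your write-up, but the smallness should not be attributed to Taylor's theorem alone.
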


\begin{proof}
Let $S$ be the symplectic leaf through the point $p$. Since the
action fixes the point $p$ and is Poisson, the symplectic leaf $S$
is invariant under the action of $G$.
 Since $S$ is an invariant submanifold, there exists an invariant submanifold $N$ of $P$
which is transversal to $S$ at $p$ (use for instance a $G$-invariant
Riemannian metric and the orthogonal to $S$ will define a local
transversal which is $G$-invariant).
 On this transversal $N$ the restriction of the Poisson structure is the transverse Poisson
structure and the Poisson structure can be written in local
coordinates in \lq\lq splitted" form with respect to $S$ and $N$.

The restriction of the action of $G$ on $S$ is clearly Hamiltonian.
Using the equivariant version of Darboux's theorem (\cite{chaperon},
\cite{wei1}) we can  find local coordinates in which it is linear.
The Poisson structure can be written in local coordinates in a
splitted form with a Darboux-like \lq\lq symplectic" part as
follows,

\begin{equation}
\Pi = \sum_{i=1}^k \frac{\partial}{\partial x_i}\wedge
\frac{\partial}{\partial y_i} + \sum_{ij}
f_{ij}(z)\frac{\partial}{\partial z_i}\wedge
\frac{\partial}{\partial z_j},
\end{equation}

 We can use for instance Dirac's
formula (see proposition 1.6.2 in page 22 in the book \cite{dufourzung}) to prove that  the restriction of the
action on $N$ is still Hamiltonian and, using Bochner's theorem (stated
as Theorem \ref{thm:bo} in this paper), we can assume that this action is
linear.

At this point, we have two actions: our initial action $\rho$ which
is Hamiltonian and linear along $S$ and $N$ and a new action
$\rho_1$ which is the linear action  defined as the linear extension
(diagonal action)
 of the restricted actions $\rho_N$ and $\rho_S$ and which is clearly Hamiltonian (since the restrictions are Hamiltonian).

This linear Hamiltonian action $\rho_1$ of $G$ is close to the initial Hamiltonian action $\rho$. We can
conclude using Theorem \ref{thm:maintheoremlocal}.
%
%
%
\end{proof}

\subsubsection{ Semilocal Rigidity: Hamiltonian actions on Poisson manifolds in the neighbourhood of a compact invariant submanifold}

We prove the following theorem in the semilocal case:


\begin{thm}\label{thm:mainsemilocal}
Consider a Poisson manifold $(M,\{\,,\,\})$ and a  compact
submanifold $N$ of $M$. Suppose that we have a Hamiltonian action of
a Lie group $G$ defined on a given $G$-invariant neighbourhood $U$
of $N$ containing a tubular neighbourhood of type
$\mathcal{U}_\varepsilon(N)$ ($\varepsilon>0$). Suppose that the
Hamiltonian action is given by a momentum map $\lambda :
U\longrightarrow \fg^\ast$ where $\fg$  is a semisimple Lie algebra
of compact type.

There exist a positive integer $l$ and two positive real numbers
$\alpha$ and $\beta$ (with $\beta<1<\alpha$) such that, if $\mu$ is
another momentum map on $U$ with respect to the same Poisson
structure and Lie algebra which also has $N$ as an invariant set,
satisfying
\begin{equation}
\| \lambda - \mu \|_{2l-1,\varepsilon} \leq \alpha \quad {\mbox { and }} \quad
\| \lambda - \mu \|_{l,\varepsilon}\leq \beta
\end{equation}
then, there exists a diffeomorphism $\psi$ of class
$C^k$, for all $k\geq l$, on the neighbourhood $\mathcal{U}_{\varepsilon/2}(N)$ such that
$\mu\circ \psi =\lambda$.
\label{thm:NormalFormMomentMap2}
\end{thm}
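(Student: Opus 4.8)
The plan is to reduce the semilocal statement to the same Nash--Moser normal form machinery that proves the local theorem, the only difference being bookkeeping: balls $B_r$ are replaced by tubular neighbourhoods $\mathcal{U}_\varepsilon(N)$ and the norms $\|\cdot\|_{k,r}$ by $\|\cdot\|_{k,\varepsilon}$. First I would set up the analytic framework near $N$. Using a $G$-invariant Riemannian metric and the normal exponential map, I identify $\mathcal{U}_\varepsilon(N)$ with a neighbourhood of the zero section of the normal bundle $\nu(N)$, so that $G$ acts fibrewise-linearly along $N$ (by Bochner's theorem, Theorem~\ref{thm:bo}, applied in the equivariant tubular-neighbourhood form). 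In these coordinates the two momentum maps $\lambda,\mu : \mathcal{U}_\varepsilon(N)\to\fg^\ast$ become $\fg^\ast$-valued functions, and the hypotheses $\|\lambda-\mu\|_{2l-1,\varepsilon}\leq\alpha$, $\|\lambda-\mu\|_{l,\varepsilon}\leq\beta$ say exactly that $\mu$ is a small deformation of $\lambda$ in the relevant Banach scale. The point is that $N$ being compact, these $C^k$-norms are still finite and behave (interpolation inequalities, smoothing operators, Sobolev embeddings) exactly as in the local case over a ball; one only needs a partition of unity subordinate to finitely many trivializing charts of $\nu(N)$ to transfer the estimates from $B_r$ to $\mathcal{U}_\varepsilon(N)$.

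Next I would identify the SCI-space data. The space of $\fg^\ast$-valued smooth functions on the shrinking family $\mathcal{U}_{\varepsilon'}(N)$, $0<\varepsilon'\le\varepsilon$, with the Sobolev norms alluded to in Section~4.1, forms an SCI-space exactly as the corresponding space of functions on balls does; the group of Poisson diffeomorphisms of $(M,\{\,,\,\})$ fixing $N$ acts on momentum maps by $\mu\mapsto\mu\circ\psi$, and this is an SCI-action. The infinitesimal model of this action is the Chevalley--Eilenberg differential $d_\lambda$ of the representation of $\fg$ on $C^\infty(\mathcal{U}_\varepsilon(N))$ induced by $\lambda$ and the Poisson bracket. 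The crucial input is the vanishing of $H^1$ of that complex together with the tame (homotopy-operator) estimates; since $\fg$ is semisimple of compact type, this is precisely the infinitesimal rigidity statement proved in Section~5 — and its proof, following Conn, is already local/semilocal in nature because it averages over $G$ and uses the compactness of $G$ rather than of $M$, so the presence of the compact base $N$ instead of a single fixed point changes nothing essential in the cohomological vanishing or in the estimates for the homotopy operator.

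With these two ingredients verified, I would invoke the Nash--Moser normal form theorem (Theorem~\ref{thm:Nash-Moser}) verbatim: it takes an SCI-action whose linearization admits a tame right inverse (bounded by the $H^1$-vanishing estimates) and, given an element $\mu$ sufficiently close to the "model" $\lambda$ in the low norm $\|\cdot\|_{l,\varepsilon}$ with controlled high norm $\|\cdot\|_{2l-1,\varepsilon}$, produces $\psi$ of class $C^k$ for every $k\ge l$, defined on the halved neighbourhood $\mathcal{U}_{\varepsilon/2}(N)$, with $\mu\circ\psi=\lambda$. The loss of domain from $\varepsilon$ to $\varepsilon/2$ is absorbed by the usual geometric shrinking of domains across the Newton iteration; the integer $l$ and the constants $\alpha,\beta$ come out of the same inequalities (\ref{cond:lsespsilon}), (\ref{cond:lsespsilonBis}) as in the local case, only with $r$ replaced by $\varepsilon$.

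The main obstacle I anticipate is purely technical rather than conceptual: verifying carefully that the smoothing operators, interpolation (Hadamard convexity) inequalities, and the tame estimates for the Chevalley--Eilenberg homotopy operator all survive the passage from a ball $B_r\subset\R^n$ to the tubular neighbourhood $\mathcal{U}_\varepsilon(N)$ of a compact submanifold — i.e.\ checking that the hypotheses of Theorem~\ref{thm:Nash-Moser} (the "SCI" axioms and the estimate on the approximate right inverse) genuinely hold in this slightly curved, globally-twisted setting. Concretely this means doing the partition-of-unity argument carefully enough that constants remain uniform, and handling the fact that the normal bundle $\nu(N)$ need not be trivial, so that "fibrewise-linear" data must be patched. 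Once that bookkeeping is in place the conclusion follows formally, so — as the authors say — this verification is deferred to the technical appendix.
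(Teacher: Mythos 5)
Your proposal is correct and follows essentially the same route as the paper: the authors likewise reduce the semilocal case to the local one by replacing balls $B_r$ with tubular neighbourhoods $\mathcal{U}_\varepsilon(N)$, verifying the technical estimates of Appendix 2 via normal coordinates from the exponential map (balls in the normal fibers over the compact $N$), and then invoking the abstract SCI normal form Theorem~\ref{thm:Nash-Moser} exactly as in the local proof. The only cosmetic difference is that for the linearity of the action needed to make the Sobolev norms $G$-invariant (and hence to get the homotopy-operator estimates), the paper appeals to the semilocal version of the Mostow--Palais equivariant embedding (obtained by a finite cover of orbit neighbourhoods of the compact $N$) rather than to an equivariant tubular-neighbourhood form of Bochner's theorem, but both devices serve the same purpose.
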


\subsubsection{ Global Rigidity: Hamiltonian actions on compact Poisson manifolds of compact semisimple type}

We prove the following,
\begin{thm}\label{thm:mainglobal}
Consider a compact Poisson manifold $(M,\{\,,\,\})$ and a Hamiltonian action on $M$  given by the momentum map
$\lambda : M\longrightarrow \fg^\ast$ where $\fg$ is a semisimple Lie algebra of compact type.

There exist a positive integer $l$ and two positive real numbers
$\alpha$ and $\beta$ (with $\beta<1<\alpha$) such that, if $\mu$ is
another momentum map on $M$ with respect to the same Poisson
structure and Lie algebra, satisfying
\begin{equation}
\| \lambda - \mu \|_{2l-1} \leq \alpha \quad {\mbox { and }} \quad \| \lambda - \mu \|_{l} \leq \beta
\end{equation}
then, there exists a diffeomorphism $\psi$ of class
$C^k$, for all $k\geq l$, on $M$ such that
$\mu\circ \psi =\lambda$.\label{thm:NormalFormMomentMap3}
\end{thm}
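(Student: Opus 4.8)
The plan is to reduce the global statement (Theorem \ref{thm:mainglobal}) to a combination of the local rigidity result (Theorem \ref{thm:maintheoremlocal}), its semilocal counterpart (Theorem \ref{thm:mainsemilocal}), and the classical Mostow--Palais and Palais theorems, by a standard patching argument over a finite cover of the compact manifold $M$. First I would use the fact that $G$ is compact to fix a $G$-invariant Riemannian metric on $M$; since the two actions $\rho_\lambda$ and $\rho_\mu$ associated to $\lambda$ and $\mu$ are $C^{2l-1}$-close, Palais' theorem produces a $C^k$-diffeomorphism $\varphi$ (in the arc-component of the identity) conjugating the two underlying $G$-actions. Replacing $\mu$ by $\mu\circ\varphi$, we may therefore assume from the outset that $\rho_\lambda=\rho_\mu=:\rho$, i.e.\ the two momentum maps generate \emph{the same} $G$-action, and $\mu-\lambda$ is still small in the relevant norms (the loss of finitely many derivatives here forces the appearance of the index $2l-1$, and is the reason we ask $\lambda$ to be a priori smooth enough). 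The task is then to find a $\rho$-equivariant Poisson diffeomorphism $\psi$ of $M$, close to the identity, with $\mu\circ\psi=\lambda$.

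Next I would build $\psi$ by an equivariant partition-of-unity/patching scheme. Using the Mostow--Palais embedding one slices $M$ into finitely many $G$-invariant "tubes": around each orbit type stratum one has $G$-invariant tubular neighbourhoods $U_1,\dots,U_s$ modelled on slices, together with a subordinate $G$-invariant partition of unity $\{\chi_i\}$. On the lowest stratum (fixed points of the isotropy, handled in linearized slice coordinates) the local theorem, Theorem \ref{thm:maintheoremlocal}, applies verbatim on each small ball to give a local equivariant diffeomorphism $\psi_i$ with $\mu\circ\psi_i=\lambda$; on a tube around a positive-dimensional compact invariant submanifold one instead invokes the semilocal theorem, Theorem \ref{thm:mainsemilocal}, in the neighbourhood $\mathcal U_\varepsilon(N)$. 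The delicate point is that these local solutions must be glued into a single \emph{global} diffeomorphism of $M$: one does this by running the same Nash--Moser normal form scheme (Theorem \ref{thm:Nash-Moser}) directly on the global SCI-space of momentum maps on $M$, where the relevant SCI-structure is obtained by assembling the local Sobolev and $C^k$ norms via $\{\chi_i\}$ and the cohomological splitting used locally (the vanishing of the first Chevalley--Eilenberg cohomology of $\fg$ acting on $C^\infty(M)$ via $\lambda$, which holds because $\fg$ is semisimple of compact type and the action is Hamiltonian). In other words, the global result is not really a corollary of the local ones by abstract nonsense; rather, all three theorems are instances of one and the same Nash--Moser normal form theorem applied to the appropriate space of functions (germs at $0$, germs along $N$, or global functions on $M$), and one verifies the SCI axioms for the global space just as for the local ones.

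Concretely the iteration goes as follows. Set $f^0=\mu-\lambda$, small in $\|\cdot\|_{2l-1}$ and very small in $\|\cdot\|_l$. At step $d$, having a correction $f^d$, solve the linearized (cohomological) equation $\lambda\circ\exp(X_d)\approx\lambda+f^d$ by taking $X_d$ to be the globally defined vector field obtained from splitting $f^d$ against the coboundary operator $\delta_\lambda$; equivariance is arranged by averaging $X_d$ over $G$ with the Haar measure, exactly as in the symplectic proof reproduced in Section 2. Composing $\psi_{d+1}=\psi_d\circ\exp(X_d)$ and using tame estimates with the quadratic error control built into Theorem \ref{thm:Nash-Moser}, the sequence $\psi_d$ converges in every $C^k$, $k\ge l$, to a diffeomorphism $\psi$ of $M$ (no boundary shrinking is needed in the global case, unlike the $B_R\rightsquigarrow B_{R/2}$ loss locally) with $\mu\circ\psi=\lambda$. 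Composing with the Palais diffeomorphism $\varphi$ of the first step gives the desired equivalence.

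The main obstacle I expect is the verification that the \emph{global} space of momentum maps on the compact Poisson manifold $M$, with the $C^k$-norms of Section 4.1, genuinely satisfies the smoothing/interpolation axioms of an SCI-space and that the $G$-action together with the cohomological homotopy operator furnishes an SCI-action in the sense required by Theorem \ref{thm:Nash-Moser}; this is precisely the analogue, in the global setting, of the "technical lemmas" relegated to the second appendix for the local case, and it is where the non-tameness of Poisson vector fields (the very difficulty that blocked the naive "infinitesimal rigidity $\Rightarrow$ rigidity" argument of Ginzburg) has to be circumvented by working with the weaker SCI structure rather than a tame Fréchet one. Everything else — the averaging for equivariance, the Palais reduction to a common action, and the convergence bookkeeping — is parallel to the local and semilocal proofs.
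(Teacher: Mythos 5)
Your central idea --- apply the abstract SCI Nash--Moser normal form theorem (Theorem \ref{thm:Nash-Moser}) directly to the global space of momentum maps on $M$, with the homotopy operator coming from the vanishing of $H^1(\fg,\mathcal{C}^\infty(M))$ for $\fg$ semisimple of compact type, and verify the SCI estimates by relating the global $C^k$-norms to the local ones through a partition of unity --- is exactly what the paper does, and your remark that the global case is easier because no radius shrinks during the iteration is also made there. However, two of the preliminary steps you add are not in the paper and the first of them would actually break the argument.

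The Palais reduction is the genuine problem. Replacing $\mu$ by $\mu\circ\varphi$, where $\varphi$ is the (merely smooth, non-Poisson) conjugating diffeomorphism from Palais' theorem, destroys the hypothesis that $\mu$ is a momentum map \emph{with respect to the same Poisson structure}: $\mu\circ\varphi$ is a momentum map for $\varphi^*\Pi$, not for $\Pi$. The quadratic error estimate that drives the whole Newton iteration (Lemma \ref{lem:quadraticerror}, resting on the identity $\delta(\mu-\lambda)(\xi_i\wedge\xi_j)=-\{(\mu-\lambda)_i,(\mu-\lambda)_j\}$) uses $\{\mu_i,\mu_j\}=\sum_p c_{ij}^p\mu_p$ and $\{\lambda_i,\lambda_j\}=\sum_p c_{ij}^p\lambda_p$ with respect to one and the same bracket, so it fails after your substitution. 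The reduction is also unnecessary: the iteration composes time-$1$ flows of Hamiltonian vector fields, which are automatically Poisson diffeomorphisms, and once $\mu\circ\psi=\lambda$ with $\psi$ Poisson, the conjugation of the two $G$-actions follows for free from $\psi^{-1}_*X_{\xi\circ\mu}=X_{\xi\circ\lambda}$; it is a consequence, not a prerequisite. For the same reason the Haar averaging of the correction fields $X_d$ is neither performed in the paper nor needed --- the homotopy operator $h_0$ already carries the representation-theoretic content, and averaging $X_d$ against either action would spoil the estimate $\|\zeta(\phi_f\cdot f)\|\leq\|\zeta(f)\|^{1+\delta}\,Q(\cdot)$ required for fast convergence. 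Finally, the orbit-type patching you sketch is a dead end, as you yourself then concede: in the paper the partition of unity enters only to transport the technical estimates of Appendix 2 from balls to $M$, never to glue local solutions. Dropping the first paragraph and the averaging, your proposal coincides with the paper's proof.
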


\begin{rem}

For regular Poisson manifolds (even  if they are not compact) with a
symplectic foliation with compact leaves, we can prove this result
directly via Moser's theorem under some homotopical constraints on
the symplectic foliation. We would first prove leafwise equivalence
by carefully applying the path method leafwise as we did in the
symplectic case. There are some conditions on the symplectic
foliation specified in \cite{marui1} (pages 126-127) under which we
can easily extend this leaf-wise equivalence to the Poisson manifold
since the leaves are symplectically embedded (the conditions on the
foliation entail that the manifold is Lie-Dirac).

In the case that our Poisson manifold is not compact but it is a
Poisson manifold of compact type (see \cite{compacttype}) (that is,
its symplectic groupoid is compact), we may try to work directly on
the symplectic groupoid. However, it seems that the only  examples
known so far are regular \cite{compacttype} and  we could try to
apply the strategy explained above.

\end{rem}

\section{Infinitesimal rigidity of Hamiltonian actions}

The introduction of this section follows the spirit of Guillemin,
Ginzburg and Karshon in \cite{karshon} which views the step of
infinitesimal rigidity implies rigidity as an application of the
inverse function in the appropriate setting following the guidelines
of Kuranishi for deformation of complex structures.

Following \cite{karshon}, an action of a compact Lie group
$\rho:G\times M\in M$ can be considered like a group morphism to the
group of diffeomorphisms $\alpha: G\longmapsto Diff(M)$.

In the case of Poisson actions, it can be considered as a morphism
in the group of Poisson diffeomorphisms, $ Diff(M,\Pi)$.

 Two actions are equivalent if there exist an
element $\phi$ in the corresponding diffeomorphism group, say
$\mathcal G$, such that $\phi$ conjugates both actions. Let
$\mathcal{M}$ stand for $\mathcal{M}= Hom(G,\mathcal G)$, we may as
well consider the action,

\begin{equation}\label{eqn:action}
\begin{array}{rcl}
\beta:& \mathcal G  \times \mathcal M&\longmapsto \mathcal M \\
& (\phi,\alpha) & \mapsto \phi\circ\alpha\circ \phi^{-1}

\end{array}
\end{equation}

Then two actions $\rho_1$ and $\rho_2$ are equivalent if they are on
the same orbit by the new action $\beta$. Then the rigidity result
that we want to prove in the corresponding category boils down to
proving that the orbits of the action $\beta$ are open. If the a
priori, infinite dimensional  set $\mathcal M$ was a manifold and
the tangent space to the orbits equals the tangent space to the
whole space, then a straightforward application of the inverse
function theorem would entail that the orbit is an open set inside
the manifold. In other words, the orbits would be open in $\mathcal
M$ and therefore the actions would be rigid.

As it is clearly explained in \cite{karshon}, in the Palais case,
the measure of the tangent to the orbits to fail to be equal the
tangent to the whole space is given by the first  cohomology group
of the \lq\lq group cohomology" associated to the action with
coefficients in the space of smooth vector fields.

In the case that the actions we are comparing are Hamiltonian on a
given Poisson manifold, we have a natural representation of $\frak
g$ on the set of smooth functions which naturally leads to a
Chevalley-Eilenberg complex. This is the Lie algebra cohomology that
we will consider (instead of the group cohomology considered in
\cite{karshon}).

The substitute for the inverse function theorem for
infinite-dimensional sets is the Nash-Moser theorem. The sets
considered have to fulfill some conditions explained in the
foundational paper by Hamilton \cite{Hamilton}. The sets are
required to be tame Fr\'{e}chet manifolds and the considered mappings to
be tame. In the Poisson case we would need to check that the set of
Poisson vector fields satisfies the conditions explained by Hamilton.
This seems a difficult endeavour. Our strategy would be based rather
on applying the method of proof in \cite{Hamilton} which is inspired
by Newton's iterative method.

This is why we compute the first cohomology group of the Lie algebra
cohomology associated to a Hamiltonian action in this section.

\subsection{ Chevalley-Eilenberg complex associated to a
Hamiltonian action}
\label{Chevalley-Eilenberg}
In this subsection $(M,\Pi)$ stands for a Poisson manifold that
can either be a ball of radius $r$, an $\epsilon$-neighbourhood of a compact
submanifold inside a Poisson manifold or a compact Poisson manifold.

Let $\lambda :(M, \Pi) \longrightarrow \frak{g}^*$ be a momentum map with
component functions $\mu=(\lambda_1,\dots, \lambda_n)$. Pick an orthonormal basis
$\xi_1, \dots, \xi_n$ of $\frak g$.

The Lie algebra $\frak g$ defines a representation  $\rho$ of $\frak g$ on
${\mathcal{C}}^{\infty}(M)$ defined on the base as
$\rho_{\xi_i}(h):=\{\lambda_i,h\}$, $\forall i$.

The set $\mathcal{C}^{\infty}(M)$ can be then viewed as a $\frak
g$-module and we can introduce the corresponding Chevalley-Eilenberg
complex \cite{chevalley-eilenberg}.

The space of cochains is defined as follows:

For $q\in\Bbb N$, $C^q(\frak g,
\mathcal{C}^{\infty}(M))={Hom}(\bigwedge^q{\frak{g}},\mathcal{C}^{\infty}(M))$
is the space of alternating $q$-linear maps from $\frak g$ to
$\mathcal{C}^{\infty}(M)$, with the convention $C^0(\frak
g,\mathcal{C}^{\infty}(M))=\mathcal{C}^{\infty}(M)$. The associated
differential is denoted by $\delta_i$.

Let us give an explicit expression for $\delta_0$, $\delta_1$ and
$\delta_2$ since they will show up in the proof of the main theorem
of this paper.

\[
\xymatrix{
\mathcal{C}^{\infty}(M)\ar[r]^-{\delta_0} &%
 C^1(\frak g, \mathcal{C}^{\infty}(M))\ar[r]^-{\delta_1} &%
 C^2(\frak g, \mathcal{C}^{\infty}(M))}
\]

$$\delta_0(f)(\xi_1)=\rho_{\xi_1}(f), \quad f\in\mathcal{C}^{\infty}(M)$$

$$\delta_1(\alpha)(\xi_1\wedge\xi_2)=\rho_{\xi_1}(\alpha(\xi_2))-\rho_{\xi_2}(\alpha(\xi_1))-\alpha([\xi_1,\xi_2]), \quad\alpha\in C^1(\frak g, \mathcal{C}^{\infty}(M))$$

$$\delta_2(\beta)(\xi_1\wedge\xi_2\wedge\xi_3)=\sum_{\sigma \in
A_3}\rho_{\xi_{\sigma(1)}}(\beta(\xi_{\sigma(2)}\wedge\xi_{\sigma(3)}))+\beta(\xi_{\sigma(1)}\wedge[\xi_{\sigma(3)},\xi_{\sigma(3)}]),
\quad\beta\in C^2(\frak g, \mathcal{C}^{\infty}(M))$$

\noindent where $\xi_1,\xi_2,\xi_3\in\frak g$.

As proved by Chevalley-Eilenberg the differential satisfies
$\delta_i\circ \delta_{i-1}=0$. Therefore we can define the
quotients $$H^{i}(\frak g,
\mathcal{C}^{\infty}(M))=ker(\delta_i)/Im(\delta_{i-1}) \quad
\forall i \in \Bbb N.$$

This complex was used in the abelian case for instance in
\cite{evasan} to construct a deformation complex for integrable
systems on symplectic manifolds.
 A similar complex was used by Conn in \cite{conn}
and \cite{conn0}. This complex was associated to the infinitesimal
version of the adjoint representation of the Lie algebra associated
to the linear part of the Poisson structure.

\begin{rem}  Geometrical interpretation of $H^{1}(\frak g,
\mathcal{C}^{\infty}(M))$.

The first cohomology group can be interpreted as infinitesimal
deformations of the Hamiltonian action modulo trivial deformations.
\end{rem}

In the compact semisimple case, it is known that the first and
second cohomology groups vanish.
Namely,

\begin{lem} Let $\frak g$ be semisimple of compact type and let $V$ be a Fr\'{e}chet space  then
$H^1(\frak g, V)=H^2(\frak g, V)=0$
\end{lem}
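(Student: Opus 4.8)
The plan is to reduce the statement to a classical fact about Lie algebra cohomology of semisimple Lie algebras, namely the Whitehead lemmas, and then extend them from finite-dimensional coefficients to Fr\'echet (in fact arbitrary) coefficients by means of an averaging/Casimir argument that does not see the topology on $V$. First I would recall that for a semisimple Lie algebra $\fg$ and \emph{any} $\fg$-module $V$, the first and second Whitehead lemmas give $H^1(\fg,V)=H^2(\fg,V)=0$ provided one can run the standard proof, which relies only on the existence of the Casimir element (equivalently, on complete reducibility of the adjoint-type pieces involved). Since $\fg$ is semisimple of compact type, we may fix a basis $\xi_1,\dots,\xi_n$ that is orthonormal for an invariant inner product (minus the Killing form), so that the dual basis coincides with $(\xi_i)$ and the Casimir element is simply $c=\sum_i \xi_i\xi_i$ acting on cochains.

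The key steps, in order: (1) Introduce on the Chevalley--Eilenberg complex $C^\bullet(\fg,V)$ the Lie-derivative action $\theta_\xi=\delta_{q-1}\iota_\xi+\iota_\xi\delta_q$ and the induced action of the Casimir $c$. Show $c$ acts as a \emph{chain map} commuting with the differentials $\delta_q$ (this is the usual identity $\theta_\xi\delta=\delta\theta_\xi$, summed against the invariant pairing). (2) Prove the homotopy formula: on cochains, $c = \sum_i(\delta_{q-1}\iota_{\xi_i}+\iota_{\xi_i}\delta_q)\circ(\text{action of }\xi_i)$, so that on any cocycle $\alpha\in C^q(\fg,V)$ with $\delta_q\alpha=0$ one has $c\cdot\alpha = \delta_{q-1}\bigl(\sum_i \iota_{\xi_i}(\xi_i\cdot\alpha)\bigr)$, i.e. $c\cdot\alpha$ is a coboundary. (3) Conclude that $c$ induces the zero map on $H^q(\fg,V)$ for every $q\ge 1$. (4) Prove that $c$ \emph{also} acts invertibly on $H^1$ and $H^2$. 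Here is the only place the semisimplicity of $\fg$ (as opposed to merely reductive) and degree $q\le 2$ enter: one decomposes the relevant cochain spaces using $\fg=[\fg,\fg]$ and the invariant splitting, exhibiting an explicit inverse to $c$ on the subquotients carrying cohomology; equivalently, one invokes that in low degree the only $\fg$-submodule on which $c$ could act by zero is the trivial one, and the trivial-isotypic part contributes nothing to $H^1$ and $H^2$ for semisimple $\fg$ (this is precisely the content of the classical Whitehead lemmas; $H^1(\fg,\mathbb{R})=H^2(\fg,\mathbb{R})=0$). (5) Combining (3) and (4): $H^1(\fg,V)$ and $H^2(\fg,V)$ are simultaneously killed and acted on invertibly by $c$, hence are zero.

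The main obstacle is step (4): making sure that the purely algebraic Whitehead argument goes through verbatim with $V$ an infinite-dimensional Fr\'echet module rather than a finite-dimensional representation. The point to check is that every construction used — the Casimir operator, the contraction maps $\iota_{\xi_i}$, the invariant splitting of the low-degree cochain complexes into a trivial-isotypic summand and its complement — is given by \emph{finite} sums of the module operations $v\mapsto \xi_i\cdot v$ and of linear algebra on the finite-dimensional space $\Lambda^\bullet\fg^*$, and therefore is automatically continuous and well-defined on $V$ with no completeness or nuclearity hypothesis needed. In particular, no integration over $V$ is required: the averaging is over the compact group acting on the finite-dimensional $\fg$, which is harmless. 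An alternative route, which I would mention as a remark, is to average the action of the \emph{compact group} $G$ directly on cochains (using a Haar measure on $G$, with values in the Fr\'echet space $C^q(\fg,V)$, which exists for any complete locally convex $V$) to reduce to $G$-invariant cochains, and then note that the $G$-invariant part of the Chevalley--Eilenberg complex in degrees $1$ and $2$ is concentrated in the trivial-coefficient case, where the vanishing $H^1(\fg,\mathbb{R})=H^2(\fg,\mathbb{R})=0$ is classical. Either way the proof is short once the classical finite-dimensional statement is in hand; the only thing to emphasize is that the standard proof is ``coefficient-agnostic'' so that ``Fr\'echet'' in the hypothesis is not actually used beyond making $V$ a well-behaved topological vector space.
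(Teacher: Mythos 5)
Your steps (1)--(3) are the standard and correct algebraic part of the Casimir argument: $c$ acts on the Chevalley--Eilenberg complex as a chain map and is chain-homotopic to zero, hence kills $H^q(\fg,V)$ for $q\geq 1$. The genuine gap is step (4), and it sits exactly at the claim you make in closing, namely that the argument is ``coefficient-agnostic'' and that the Fr\'echet and compact-type hypotheses are not really used. That claim is false: Whitehead's lemmas fail for general infinite-dimensional modules over a semisimple Lie algebra. For $\fg=\mathfrak{sl}_2$, the Verma module $M(0)$ of highest weight $0$ is a non-split extension of the trivial module by the infinite-dimensional Verma submodule $M(-2)$, so $H^1\big(\mathfrak{sl}_2,M(-2)\big)=\mathrm{Ext}^1(\C,M(-2))\neq 0$, and the Casimir acts by the scalar $0$ on this nontrivial module. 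Hence ``the only $\fg$-submodule on which $c$ could act by zero is the trivial one'' is not an algebraic fact; it is a property of integrable, unitarizable representations of a compact group, where the Casimir eigenvalue on every nontrivial irreducible is bounded below by a positive constant depending only on $\fg$. Even granting injectivity, an infinite-dimensional $V$ admits no algebraic splitting into a trivial-isotypic summand and a complement on which $c$ is \emph{continuously} invertible: a priori the spectrum of $c$ could accumulate at $0$. Repairing this forces you to integrate the $\fg$-action to a $G$-action and average over Haar measure, or to pass to a Hilbert completion and use Peter--Weyl; the Guillemin--Sternberg counterexamples for non-compact semisimple algebras, cited in the introduction, already signal that no purely algebraic proof can exist.

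The averaging argument you relegate to a remark is, in substance, the correct proof, and it is the one in the reference the paper itself points to: the paper does not prove this lemma but cites Ginzburg \cite{ginzburg}, and the quantitative version it actually needs later (Lemma \ref{lemmedeconn}, i.e.\ Conn's Proposition~2.1) is established by precisely this route, with the isotypic decomposition carried out in Sobolev/Hilbert completions so that the relevant operator has a uniform spectral gap off the harmonic cochains, and the harmonic cochains in degrees $1$ and $2$ vanish by the classical finite-dimensional Whitehead lemmas applied component by component. If you want to promote your remark to the actual proof, two things are still owed: (a) the hypothesis, implicit in the statement, that the $\fg$-module structure on $V$ integrates to a continuous $G$-action (true for $C^\infty(M)$ in the paper's applications, and this is where completeness of the Fr\'echet space enters, via vector-valued Haar integration); and (b) a justification that averaging computes the cohomology, i.e.\ a chain homotopy between the averaging projection and the identity, obtained by integrating the Cartan formula along paths in the connected group $G$. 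Note also that your assertion that the $G$-invariant part of the complex in degrees $1$ and $2$ ``is concentrated in the trivial-coefficient case'' is wrong as stated: $(\Lambda^q\fg^*\otimes V)^G$ is in general strictly larger than $(\Lambda^q\fg^*)^G\otimes V^G$ (already for $q=1$ it contains $\mathrm{Hom}_{\fg}(\fg,V)$, which is nonzero whenever the adjoint representation occurs in $V$), so the nontrivial isotypic components must still be disposed of, which is where the Casimir spectral-gap argument re-enters.
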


This lemma can be seen as an infinite-dimensional version of
Whitehead's lemma (for a proof see \cite{ginzburg}). Note that the
set $V=\mathcal{C}^{\infty}(M)$ is a Fr\'{e}chet space.

Therefore we know that there exist homotopy operators $h_i$
satisfying, $$\delta_i\circ
h_i+h_{i+1}\circ\delta_{i+1}=id_{C^{i+1}(\frak g,
\mathcal{C}^{\infty}(M))}$$ \noindent for $i=0,1$.

\[
\xymatrix{
\mathcal{C}^{\infty}(M)\ar[r]^-{\delta_0} &%
 C^1(\frak g, \mathcal{C}^{\infty}(M))\ar[r]^-{\delta_1}\ar@<1ex>[l]^-{h_0}  &%
 C^2(\frak g, \mathcal{C}^{\infty}(M))\ar@<1ex>[l]^-{h_1} }
\]

We will use these homotopy operators in the proof of the main
theorem and we will need to control and bound the norms and we will
need to control their regularity properties. We spell this in the
next subsection.

\subsection{ Estimates for homotopy operators in the compact semisimple case}

Regularity properties of the homotopy operators are well-known for
Sobolev spaces. That is why, following the spirit of Conn's proof in
\cite{conn}, we first need to consider the extended
Chevalley-Eilenberg complex which comes from considering the induced
representation on the set of complex-valued functions on closed
balls (in the local case), which will be denoted in the sequel by $C_{\C}^{\infty}(B_r)$,
and consider the Sobolev norm there. However, a main point in the
proof of Conn is that those Sobolev norms are invariant by the
action of the group which is linear (in his case the coadjoint
representation of $G$ in
 $\frak g$).
Since the action is linear we can really use projections to
decompose the Hilbert space into invariant spaces.

 In our case, we have a Hamiltonian action which is semisimple of compact type. Since the action is of semisimple type and the support is compact
 there exists a compact Lie group
action integrating the Lie algebra action. Thanks to Bochner's
theorem \cite{bo} in the local case and to Mostow-Palais theorem
(\cite{mostow}, \cite{Palais}) in the global case and semilocal
\footnote{the semilocal case for compact invariant $N$ can be easily
inferred from the proof given by Mostow in his detailed paper
\cite{mostow}. Indeed, the theorem of Mostow-Palais is valid in full
generality for separable metric spaces and his proof starts by
constructing equivariant embeddings of neighbourhood of orbits. Our
invariant compact manifold $N$ is just a union of orbits. So because
of the compactness of $N$, we can find a finite covering from a
given covering of this \lq\lq basic" Mostow-orbit neighbourhoods to
find the equivariant embedding. } case, we can assume that this
action is linear (by using an appropriate G-equivariant embedding).
By virtue of these results we can even assume that $G$ is a subgroup
of the orthogonal group of the ambient finite-dimensional euclidian
vector space.

By using an orthonormal basis in the vector space $E$ for this
action we can define the corresponding Sobolev norms in the ambient
spaces given by Mostow-Palais :

\begin{equation}
\label{eqn:H-inner} \langle F_1,F_2\rangle^H_{k,r} :=  \int_{B_r}
\sum_{|\alpha| \leq k}\left(\frac{|\alpha|!}{\alpha !}\right) \left(
\frac{\partial^{|\alpha|}F_1}{\partial z^\alpha } (z) \right) \left(
\frac{\partial^{|\alpha|}F_2}{\partial z^\alpha } (z) \right)
d\mu(z) ,
\end{equation}
We denote by $\norm\,  \norm _{H_k(B_r)}$ the corresponding norms.
\begin{rem}
In the global case we consider the corresponding Sobolev norms by
integrating on the manifold:
\begin{equation}
\label{eqn:H-inner} \langle F_1,F_2\rangle^H_{k} :=  \int_{M}
\sum_{|\alpha| \leq k}\left(\frac{|\alpha|!}{\alpha !}\right) \left(
\frac{\partial^{|\alpha|}F_1}{\partial z^\alpha } (z) \right) \left(
\frac{\partial^{|\alpha|}F_2}{\partial z^\alpha } (z) \right)
d\mu(z) ,
\end{equation}

\end{rem}

\begin{rem}
In the semilocal case we consider the corresponding Sobolev norms by
integrating on closed tubular neighbourhoods:
\begin{equation}
\label{eqn:H-inner} \langle F_1,F_2\rangle^H_{k,\varepsilon} :=
\int_{\mathcal{U}_\varepsilon (N)} \sum_{|\alpha| \leq
k}\left(\frac{|\alpha|!}{\alpha !}\right) \left(
\frac{\partial^{|\alpha|}F_1}{\partial z^\alpha } (z) \right) \left(
\frac{\partial^{|\alpha|}F_2}{\partial z^\alpha } (z) \right)
d\mu(z) ,
\end{equation}

\end{rem}

From now on, we just check all the assertions for the local norms for the sake of simplicity.
The semilocal and global
case can be treated in the same way.

Since these Sobolev norms are expressed in an orthonormal basis for
the linear action. The norm  $H_{1,r}= \label{eqn:H-inner} \langle
F_1,F_2\rangle^H_{k,r} :=  \int_{B_r} f(z) g(z) d\mu(z)$ is
invariant by $G$ (because of standard change of variable
${\overline{z}}=\rho_g(z)$  and the fact that the modulus of the
determinant of this change is $1$).

Now the chain rule  yields invariance of the other norms by the
linear group action.

 For the sake of simplicity, we just include here explicit propositions and theorems only in
the local case. Of course, all the estimates hold in the global and
semilocal cases, replacing the $C^{k}$ norms on closed balls $B_r$ by
the $C^k$ norms on $M$ in the global case and considering the
$\varepsilon$-neighbourhood topology in the tubular neighbourhood of $N$ as
we spelled out in the preceding section.

 We now proceed to study the regularity properties of the homotopy
 operators with respect to these Sobolev norms and then deduce
 regularity properties of the initial norms by looking at the real
 part.

 From now on we will closely follow notation and results contained in
\cite{conn}.

The set $\mathcal{C}_{\C}^{\infty}(B_r)$ can be considered naturally as
a $\frak g$-module after considering the representation defined on
an orthonormal basis $\xi_i$ of $\frak g$ by
$\rho_{\xi_i}(h):=\{\lambda_i,h\}$, $\forall i$. To this representation we
can associate a Chevalley-Eilenberg complex and define the
differential operator $\delta$ as we did in the previous subsection.

We will need the following lemma :

\begin{lem} In the Chevalley-Eilenberg complex associated to $\rho$:

\[
\xymatrix{
\mathcal{C}_{\C}^{\infty}(B_r)\ar[r]^-{\delta_0} &%
 C^1(\frak g, \mathcal{C}_{\C}^{\infty}(B_r))\ar[r]^-{\delta_1} &%
 C^2(\frak g, \mathcal{C}_{\C}^{\infty}(B_r))}
\]

there exists a chain of homotopy operators

\[
\xymatrix{ \mathcal{C}_{\C}^{\infty}(B_r) &
 C^1(\frak g, \mathcal{C}_{\C}^{\infty}(B_r)) \ar[l]^-{h_0}&
 C^2(\frak g, \mathcal{C}_{\C}^{\infty}(B_r))\ar[l]^-{h_1}}
\]

such that

$$\delta_0\circ h_0+h_{1}\circ\delta_{1}=id_{C^{1}(\frak g,
\mathcal{C}_{\C}^{\infty}(B_r))}$$

and

$$\delta_1\circ h_1+h_{2}\circ\delta_{2}=id_{C^{1}(\frak g,
\mathcal{C}_{\C}^{\infty}(B_r))}\,.$$

Moreover, there exists a real constant $C>0$ which is independent of
the radius $r$ of $B_r$ such that

\begin{equation}
\norm h_j(S) \norm _{H_k(B_r)}\leq C\norm S \norm_{H_{k}(B_r)}, \quad j=0,1,2
\end{equation}

for all $S\in C^{j+1}(\frak g,\mathcal{C}_{\C}^{\infty}(B_r))$ and
$k\geq 0$. These mappings $h_j$ are real operators.

\end{lem}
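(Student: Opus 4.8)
The plan is to follow the strategy of Conn \cite{conn}: use the compact
group action integrating $\rho$ to break the Chevalley--Eilenberg complex
into finite-dimensional pieces on which Whitehead's lemmas apply, and to
extract the $r$-independent bound from the spectral gap of the Casimir
operator of $\fg$.

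Recall from the discussion preceding the lemma that the $\fg$-action $\rho$
integrates to an action of a compact Lie group $G$ which, by Bochner's
theorem (Theorem~\ref{thm:bo}) in the local case and by the Mostow--Palais
theorem in the semilocal and global cases, may be assumed linear and
orthogonal; passing to the identity component we may also assume $G$
connected with Lie algebra $\fg$. In particular each Sobolev space
$H_k(B_r)$ of complex-valued functions is a unitary $G$-module and every
operator $\rho_{\xi_i}$ is skew-adjoint on it (this is precisely where the
hypothesis that $\fg$ be of compact type enters). Consequently $H_k(B_r)$
splits as an orthogonal Hilbert direct sum
$\widehat{\bigoplus}_{\pi\in\widehat{G}}V_\pi$ of $G$-isotypic components,
each $G$-equivariantly and isometrically isomorphic to $\pi\otimes m_\pi$
for some multiplicity Hilbert space $m_\pi$ on which $\fg$ acts trivially,
and each $\rho_{\xi_i}$ preserves every $V_\pi$. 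Tensoring with
$\Lambda^{\bullet}\fg^{\ast}$, the whole complex decomposes orthogonally,
$C^{\bullet}(\fg,H_k(B_r))=\widehat{\bigoplus}_{\pi}C^{\bullet}(\fg,\pi)
\otimes m_\pi$, and the differential $\delta$ preserves each summand.

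On each finite-dimensional complex $C^{\bullet}(\fg,\pi)$ I would build
homotopy operators $h_j^{(\pi)}$ from the classical proof of Whitehead's
first and second lemmas (the vanishings $H^1(\fg,V)=H^2(\fg,V)=0$ for
finite-dimensional modules $V$, the finite-dimensional counterpart of the
lemma recalled above): for $\pi$ nontrivial they are assembled from the
inverse of the Casimir $\Omega_\pi$, which acts on $\pi$ as a positive
scalar $c_\pi$, composed with the module action $d\pi(\xi_i)$ (applied at
most once) and with the structure constants of $\fg$; for $\pi$ trivial one
uses instead the vanishings $H^1(\fg,\C)=H^2(\fg,\C)=0$ (valid because
$\fg=[\fg,\fg]$ and $\fg$ is semisimple) on the single fixed complex
$C^{\bullet}(\fg,\C)$. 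Putting
$h_j:=\bigoplus_{\pi}\bigl(h_j^{(\pi)}\otimes\mathrm{id}_{m_\pi}\bigr)$
yields operators on $C^{j+1}(\fg,H_k(B_r))$ satisfying
$\delta_0 h_0+h_1\delta_1=\mathrm{id}$ on $C^{1}$ and
$\delta_1 h_1+h_2\delta_2=\mathrm{id}$ on $C^{2}$; to arrange that the
$h_j$ be real operators one replaces each by its average with the
conjugated operator $S\mapsto\overline{h_j(\overline{S})}$, which is again
a homotopy operator because $\delta$ is real.

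The essential point, and the step I expect to be the main obstacle, is the
uniform estimate $\|h_j(S)\|_{H_k(B_r)}\leq C\|S\|_{H_k(B_r)}$ with $C$
independent of $r$ (and of $k$). Since $h_j$ is an orthogonal sum of the
operators $h_j^{(\pi)}\otimes\mathrm{id}_{m_\pi}$, it suffices, by
Pythagoras, to bound $\|h_j^{(\pi)}\|$ uniformly in $\pi$. For nontrivial
$\pi$ one combines $\|\Omega_\pi^{-1}\|=1/c_\pi$ with
$\|d\pi(\xi_i)\|\leq\sqrt{c_\pi}$ (which follows from
$\sum_i\|d\pi(\xi_i)v\|^2=c_\pi\|v\|^2$) to obtain
$\|h_j^{(\pi)}\|\leq\mathrm{const}(\fg)/\sqrt{c_\pi}$; as the nonzero
Casimir eigenvalues of the irreducible representations of the fixed Lie
algebra $\fg$ are bounded below by a positive constant
$\epsilon_0=\epsilon_0(\fg)$, this is
$\leq\mathrm{const}(\fg)/\sqrt{\epsilon_0}$, while for the trivial $\pi$
the norm of $h_j^{(\pi)}$ is a fixed number attached to
$C^{\bullet}(\fg,\C)$. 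Thus $C$ may be taken to be the maximum of these
quantities; it depends only on $\fg$ and $G$, in particular not on $r$, and
the semilocal and global statements follow by the identical argument with
$B_r$ replaced by $\mathcal{U}_\varepsilon(N)$ or by $M$. Finally, since
the same $C$ works for every $k$, the operators $h_j$ carry
smooth-function-valued cochains to smooth-function-valued cochains (by
Sobolev embedding, since $\bigcap_k H_k\subset\mathcal{C}^{\infty}$), and
the homotopy identities pass from the Sobolev level to the smooth level by
density. The delicate feature is exactly this uniformity: it is crucial
that the $h_j$ be assembled from the homotopy operators of the
finite-dimensional isotypic blocks -- where the Casimir is bounded and the
\lq\lq loss of one derivative'' of the first-order operators
$\{\lambda_i,\cdot\}$ is absorbed by the spectral gap of the Casimir of
$\fg$ -- rather than written directly in terms of those operators, which
would make $C$ depend on $r$.
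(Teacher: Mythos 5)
Your proposal is correct and follows essentially the same route as the paper: the paper simply observes that, after the Bochner/Mostow--Palais linearization makes the $G$-action orthogonal and the Sobolev inner products $G$-invariant, the lemma reduces to Conn's Proposition 2.1 in \cite{conn}, and your isotypic decomposition of the unitary $G$-module $H_k(B_r)$ combined with the Casimir spectral gap (plus averaging with the conjugate to get real operators) is precisely the mechanism of Conn's proof. The one point worth flagging is that the exact algebraic form of the blockwise homotopy $h_j^{(\pi)}$ (contraction composed with the inverse of the appropriate Casimir, or equivalently $\delta^*$ composed with the Green operator of the Laplacian) needs the standard identity relating $\delta$, its adjoint and the Casimir, but this is exactly what Conn supplies and does not affect the uniformity in $r$ and $k$ that you correctly identify as the crux.
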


This lemma was used, and proved, in \cite{conn} by Conn with respect
to a representation $\rho$ of a Lie algebra $\fg$ associated to a
linear Poisson structure, on the spaces
$\mathcal{C}^{\infty}(B_r))$. This representation can be written
$\rho_{\xi_i} (f)=\sum c_{ij}^k x_k \frac{\partial f}{\partial x_i}$
($x_1,\hdots, x_n$ is a coordinate system in a neighbourhood of 0 in
$\R^n$).

In our case, the representation is associated to a momentum map and is linear after applying the Mostow-Palais
embedding. Therefore, Conn's Proposition 2.1 in page 576 in \cite{conn} still holds in our case.\\

Since the operators $h_j$ are real operators we can use this bound
to obtain bounds for the initial representation on
$\mathcal{C}^{\infty}(B_r))$ instead of
$\mathcal{C}_{\C}^{\infty}(B_r))$. In \cite{conn} the bounds in the lemma
before are reinterpreted for the $\norm . \norm _{k,r}$ of $C^k$-differentiability on the ball $B_r$
using the
Sobolev lemma.

We recall the arguments contained in \cite{conn} page 580.

Because of the definition of Sobolev norms, the following inequality
holds ($n$ is the dimension : $B_r\subset \R^n$) :

$$\norm f \norm_{H_k(B_r)} \leq r^{n/2} V^{1/2} (n+1)^k\norm f \norm_{k,r}, \quad \forall k\geq 0 ,$$

\noindent where $V$ is the volume of the unit ball in $\R^n$.
Furthermore, a weak version of Sobolev lemma implies that there
exists a constant $M>0$ such that $\forall k\geq 0$  and $0<r\leq 1$
the following inequality holds:

$$\norm f \norm_{k,r}\leq r^{-{n/2}} M \norm f \norm_{H_{k+s}(B_r)},$$
where $s=[n/2]+1$ ($[n/2]$ is the integer part of $n/2$).
Combining those two inequalities with the lemma above we obtain :

\begin{lem}\label{lemmedeconn} In the Chevalley-Eilenberg complex associated to $\rho$:

\[
\xymatrix{
\mathcal{C}^{\infty}(B_r)\ar[r]^-{\delta_0} &%
 C^1(\frak g, \mathcal{C}^{\infty}(B_r))\ar[r]^-{\delta_1} &%
 C^2(\frak g, \mathcal{C}^{\infty}(B_r))}
\]

there exists a chain of homotopy operators

\[
\xymatrix{ \mathcal{C}^{\infty}(B_r) &
 C^1(\frak g, \mathcal{C}^{\infty}(B_r)) \ar[l]^-{h_0}&
 C^2(\frak g, \mathcal{C}^{\infty}(B_r))\ar[l]^-{h_1}}
\]

such that

$$\delta_0\circ h_0+h_{1}\circ\delta_{1}=id_{C^{1}(\frak g,
\mathcal{C}^{\infty}(B_r))}$$

and

$$\delta_1\circ h_1+h_{2}\circ\delta_{2}=id_{C^{1}(\frak g,
\mathcal{C}^{\infty}(B_r))}\,.$$

Moreover,  for each $k$, there exists a real constant $C_k>0$ which
is independent of the radius $r$ of $B_r$ such that

\begin{equation}
\norm{h_j(S)}\norm_{k, r}\leq C_k\norm{S}\norm_{k+s, r}, \quad j=0,1,2
\label{eqn:conn-homotopy}
\end{equation}

for all $S\in C^{j+1}(\frak g,\mathcal{C}^{\infty}(B_r))$

\end{lem}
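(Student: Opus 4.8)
The plan is to deduce Lemma \ref{lemmedeconn} from the previous lemma (the Sobolev-norm version) by a direct interpolation between $C^k$-norms and Sobolev norms, exactly following Conn's argument on page 580 of \cite{conn}. The previous lemma already gives us, on the complex-valued Chevalley-Eilenberg complex, a chain of homotopy operators $h_0,h_1,h_2$ that are real (hence restrict to the real-valued complex) and satisfy the uniform Sobolev bound $\norm h_j(S)\norm_{H_k(B_r)}\le C\norm S\norm_{H_k(B_r)}$ with $C$ independent of $r$. What remains is purely a normed-space manipulation: convert a $C^{k+s}$-bound on the input into a Sobolev bound, apply the operator bound, then convert the resulting Sobolev bound on the output back into a $C^k$-bound.

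First I would record the two elementary inequalities quoted in the excerpt. On one side, integrating derivatives over $B_r$ gives
$$\norm f\norm_{H_k(B_r)}\le r^{n/2}V^{1/2}(n+1)^k\norm f\norm_{k,r},$$
where $V$ is the volume of the unit ball in $\R^n$; this is immediate from the definition \eqref{eqn:H-inner} of the Sobolev inner product, since each multi-index coefficient $|\alpha|!/\alpha!$ is at most $n^{|\alpha|}$ and the number of multi-indices of length $\le k$ is bounded by $(n+1)^k$. On the other side, the weak Sobolev embedding lemma gives a constant $M>0$ with
$$\norm f\norm_{k,r}\le r^{-n/2}M\norm f\norm_{H_{k+s}(B_r)}$$
for all $k\ge 0$ and $0<r\le 1$, where $s=[n/2]+1$. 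The $r$-scaling in both estimates is obtained by the change of variables $z\mapsto z/r$ reducing everything to the unit ball, which is how one arranges that the final constant does not depend on $r$.

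Then the proof is a three-line chain: for $S\in C^{j+1}(\fg,\mathcal C^\infty(B_r))$,
$$\norm{h_j(S)}\norm_{k,r}\le r^{-n/2}M\norm{h_j(S)}\norm_{H_{k+s}(B_r)}\le r^{-n/2}MC\norm S\norm_{H_{k+s}(B_r)}\le r^{-n/2}MC\cdot r^{n/2}V^{1/2}(n+1)^{k+s}\norm S\norm_{k+s,r},$$
so that setting $C_k:=MCV^{1/2}(n+1)^{k+s}$ — which depends only on $k$ and $n$, not on $r$ — yields the claimed bound \eqref{eqn:conn-homotopy}. The homotopy identities $\delta_0\circ h_0+h_1\circ\delta_1=\mathrm{id}$ and $\delta_1\circ h_1+h_2\circ\delta_2=\mathrm{id}$ are inherited verbatim from the complex-valued lemma by restriction, using that the $h_j$ are real operators and that $\delta$ is defined by the same formula on real- and complex-valued cochains.

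The only genuine content here is the uniform-in-$r$ Sobolev operator bound, which is precisely Conn's Proposition 2.1 in \cite{conn}; the point that makes it applicable in our setting is that after the Mostow-Palais (resp.\ Bochner) embedding the compact group $G$ acts linearly and orthogonally on the ambient euclidean space, so the Sobolev inner products \eqref{eqn:H-inner} are $G$-invariant and one may use orthogonal projections onto isotypic components exactly as Conn does for the coadjoint representation. So the main (and essentially only) obstacle is already discharged by the preceding lemma; the present statement is the routine $C^k\!\leftrightarrow\!H_k$ translation, and I expect no difficulty beyond bookkeeping the constants and the loss of $s$ derivatives.
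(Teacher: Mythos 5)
Your proposal is correct and follows the same route as the paper: the identities and the reality of the $h_j$ are inherited by restriction from the complex-valued (Sobolev) lemma, and the $C^k$ bound is obtained by exactly the three-step chain $\norm{\cdot}_{k,r}\le r^{-n/2}M\norm{\cdot}_{H_{k+s}(B_r)}$, the uniform Sobolev operator bound, and $\norm{\cdot}_{H_{k+s}(B_r)}\le r^{n/2}V^{1/2}(n+1)^{k+s}\norm{\cdot}_{k+s,r}$, with the powers of $r$ cancelling so that $C_k$ is independent of $r$. This is precisely the argument the paper recalls from page 580 of Conn's work, so nothing further is needed.
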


\begin{rem} This lemma gives bounds on the norms of the homotopy
operators.  Unfortunately, the estimate (\ref{eqn:conn-homotopy}) introduces
a loss of differentiability (the small shift $+s$) which is accumulated when we
use this estimate in an iterative process.
In order to overcome this loss of
differentiability one needs to use smoothing operators as defined by
Hamilton in \cite{Hamilton}. These smoothing operators were used by
Conn in his proof of normal forms of Poisson structures and we will
also use them in the proof of theorem \ref{thm:Nash-Moser} that is
needed to conclude the rigidity result.

\end{rem}

\begin{rem} Instead of using the cohomology associated to the Lie
algebra representation, we could try to use the group cohomology
(since the semisimple Lie algebra of compact type integrates to a
group action) and the operators associated to it. One might guess
that via the Chevalley-Eilenberg complex associated to the group
representation, rather than the Lie algebra representation, we could
guarantee a regularity result and therefore skip the hard techniques
required from geometrical analysis.

In fact, we can use Hodge decomposition to try to write explicit
formulas for the homotopy operators. Then the problem amounts to
finding explicit formulas for the Green operators associated to the
harmonic forms. However,   we have not been able to obtain those
explicit formulas  that guarantee the necessary regularity.

\end{rem}

\section{Proof of the main theorems}
\label{Theproof}

In this section we prove Theorems \ref{thm:NormalFormMomentMap},
\ref{thm:NormalFormMomentMap2} and \ref{thm:NormalFormMomentMap3},
modulo a Nash-Moser type normal form theorem which will be proved in
Appendix \ref{SCI}.

As we mentioned in  section 5, infinitesimal rigidity suggests that
the \lq\lq tangent space" to the orbit of the action defined in
\ref{eqn:action} and the tangent space to the space of $G$-actions
on the group of Poisson diffeomorphisms coincide.

If those manifolds were smooth or tame Fr\'{e}chet we would be able to
apply the inverse function theorem or Nash-Moser theorem to find the
conjugating diffeomorphism. However, a priori, our sets are not
known to be tame by any of the criteria proposed by Hamilton in
\cite{Hamilton}. The plan B would be to attack the problem by
reproducing an iterative fast converging method used by Hamilton in
his proof of the Nash-Moser theorem.

\subsection{Idea of the proof}

As we will see later, the proof of the results consists of applying a general and abstract normal form
theorem that we give in the next subsection and prove in Appendix 1.

In fact, the proof of this normal form theorem, and then the proof of our results, is just an
iterative process inspired by Newton's fast convergence method.




Let $\lambda$ and $\mu$ be two  close momentum maps. The idea is to construct a sequence of
momentum maps ${(\mu_d)}_{d\geq 0}$ defined on closed balls $B_{r_d}$ (${(r_d)}_{d\geq 0}$ is an
appropriate  decreasing sequence of
positive numbers which has a strictly positive limit) which are equivalent, with $\mu_0=\mu$
and such that $\mu_d$ tends to $\lambda$ when $d$ tends to $+\infty$.

Let us explain how to construct $\mu_{d+1}$ from $\mu_d$ :
\begin{itemize}
\item  We  define the following 1-cochain $f_d:\fg \longrightarrow
\mathcal{C}^\infty (B_{r_d})$ for the Chevalley-Eilenberg complex
associated to the action of $\fg$ on $\mathcal{C}^\infty(B_{r_d})$
defined in the previous section :
\begin{equation}
f_d(\xi)=\xi\circ(\mu_d-\lambda)=\xi\circ\mu_d-\xi\circ\lambda\,\quad
{\mbox { for all }} \xi\in\fg\,.
\end{equation}

\item Even if $f_d$ is not a 1-cocycle (i.e. $\delta f_d \neq 0$), we
will apply to it the homotopy operator $h$ introduced in Lemma
\ref{lemmedeconn}. We then define the Hamiltonian vector field $X_d=X_{S_{t_d}(h(f_d))}$ associated to the smooth
function $S_{t_d}(h(f_d))$ with respect
to our Poisson structure, where $S_{t_d}$ is a smoothing
operator (with a well chosen $t_d$).
Denote by
\begin{equation}
{\varphi}_d={\varphi}_{X_d}^1=Id+{\chi}_d
\end{equation}
the time-1 flow of $X_d$\,.

\item Finally, $\mu_{d+1}$ is defined as
\begin{equation}
\mu_{d+1}=\mu_d\circ\varphi_d\,.
\end{equation}
We then can check that we have, grosso modo,
\begin{equation}
\| \mu_{d+1} - \lambda \|_{k,r_{d+1}} \leq \| \mu_{d} - \lambda \|^2_{k,r_d}\quad
{\mbox { for all }} k\in\mathbb{N} \,.
\end{equation}

\end{itemize}

The reason why we use the smoothing operators is that the estimate of the homotopy operator $h$ in Lemma
\ref{lemmedeconn} introduces a loss of differentiability.

Of course, one has to check the convergence (with respect to each
$C^k$-norms) of the sequence of Poisson diffeomorphisms $\Phi_d$
defined by $\Phi_d=\varphi_0 \circ \hdots \circ \varphi_d$, which is
the hard part of this work.
\footnote{We gave here the idea of the guideline of proof in the local case.
The same construction still works for the semilocal and global
cases. In the semilocal case, we may replace the sets
${C}^\infty(B_{r_d})$ by the set of functions of type
${C}^\infty(\mathcal{U}_{r_d}(N))$, where we recall that
$\mathcal{U}_{r_d}(N)$ stands of an $r_d$-closed neighbourhood of
the invariant manifold $N$. In the global case, we work with the set
of smooth functions ${C}^\infty(M)$ on a compact manifold $M$. }

However this is not the proof we are going to implement here even if
this is the leit-motif for it (this is why this section is called
\emph{idea} of the proof). Instead of checking convergence of this
sequence of Poisson diffeomorphisms we are going to evoke a more
general theorem that works in other settings too.

\subsection{An abstract normal form theorem}
\label{SCI}
In this subsection, we state a Nash-Moser normal form theorem that we use
to prove the rigidity of Hamiltonian actions (Theorems \ref{thm:NormalFormMomentMap},
\ref{thm:NormalFormMomentMap2} and \ref{thm:NormalFormMomentMap3}).

We prove this theorem in full
detail in Appendix 1 (Appendix 2 is then devoted to technical
lemmas that let us conclude that our problem satisfies the
hypothesis of this Nash-Moser normal form theorem.) Of course,
sooner or later, we will need to struggle to find hard estimates to
prove convergence but this will happen in Appendix 1.

\subsubsection{The setting}

Grosso modo, the situation is as follows: we have a group ${\mathcal
G}$ (say of diffeomorphisms) which acts on a set ${\mathcal S}$ (of
structures). Inside ${\mathcal S}$ there is a subset $\mathcal N$
(of structures in normal form). We want to show that, under some
appropriate conditions, each structure can be put into normal form,
i.e. for each element $f \in {\mathcal S}$ there is an element $\phi
\in {\mathcal G}$ such that $\phi.f \in {\mathcal N}$. We will
assume that ${\mathcal S}$ is a subset of a linear space $\mathcal
T$ (a space of tensors) on which $\mathcal G$ acts, and $\mathcal N$
is the intersection of ${\mathcal S}$ with a linear subspace
$\mathcal F$ of $\mathcal T$. To formalize the situation involving
smooth \emph{local} structures (defined in a neighborhood of
something), let us introduce the following notions of
\emph{SCI-spaces} and \emph{SCI-groups}. Here SCI stands for
\emph{scaled $C^\infty$ type}. Our aim here is not to create a very
general setting, but just a setting which works and which can
hopefully be adjusted to various situations. So our definitions
below (especially the inequalities appearing in them) are probably
not ``optimal'', and can be improved, relaxed, etc.

\noindent {\bf SCI-spaces.} An \emph{SCI-space} $\mathcal{H}$ is a
collection of Banach spaces $(\mathcal{H}_{k,\rho},\|\,\|_{k,\rho})$
with $0<\rho\leq 1$ and $k\in\Z_+ = \{0, 1, 2, \dots\}$ ($\rho$ is
called the \emph{radius} parameter, $k$ is called the
\emph{smoothness parameter}; we say that $f\in\mathcal{H}$ if
$f\in\mathcal{H}_{k,\rho}$ for some $k$ and $\rho$, and in that case
we say that $f$ is $k$-smooth and defined in radius $\rho$) which
satisfies the following properties:
\begin{itemize}
    \item If $k<k^\prime$, then for any $0 < \rho \leq 1$, $\mathcal{H}_{k^\prime,\rho}$ is a
linear subspace of $\mathcal{H}_{k,\rho}$:
$\mathcal{H}_{k^\prime,\rho} \subset \mathcal{H}_{k,\rho}$.
    \item If $0 < \rho^\prime<\rho \leq 1$, then for each $k \in \bbZ_+$, there
    is a given linear map, called the \emph{projection map}, or \emph{radius restriction
    map},
    $$\pi_{\rho,\rho'}: \mathcal{H}_{k,\rho} \rightarrow
    \mathcal{H}_{k,\rho^\prime}. $$
    These projections don't depend on $k$ and satisfy the natural
    commutativity condition $\pi_{\rho,\rho''} = \pi_{\rho,\rho'} \circ
    \pi_{\rho',\rho''}$. If $f \in \mathcal{H}_{k,\rho}$ and $\rho' < \rho$,
    then by abuse of language we will still denote by $f$ its projection to
    $\mathcal{H}_{k,\rho'}$ (when this notation does not lead to confusions).
    \item For any $f$ in $\mathcal H$ we have
    \begin{equation}
    \|f\|_{k,\rho}\geq\|f\|_{k^\prime,\rho^\prime} \ \ \forall \ k\geq k^\prime, \rho \geq \rho^\prime.
    \end{equation}
    In the above inequality, if $f$ is not in $\mathcal{H}_{k,\rho}$ then we
    put $\|f\|_{k,\rho} = + \infty$, and if $f$ is in
    $\mathcal{H}_{k,\rho}$ then the right hand side means the norm of the
    projection of $f$ to $\mathcal{H}_{k',\rho'}$, of course.
    \item There is a smoothing operator for each $\rho$, which depends
    continuously on $\rho$. More precisely, for each $0 < \rho \leq 1$ and
    each $t > 1$ there is a linear map, called the \emph{smoothing
    operator},
\begin{equation}
S_\rho(t):\mathcal{H}_{0,\rho} \longrightarrow
\mathcal{H}_{\infty,\rho}=\bigcap_{k=0}^\infty \mathcal{H}_{k,\rho}
\end{equation}
which satisfies the following inequalities: for any $p, q \in
\bbZ_+$, $p \geq q$ we have
\begin{eqnarray}
\|S_\rho(t)f\|_{p,\rho} &\leq& C_{\rho,p,q}
t^{p-q}\|f\|_{q,\rho} \label{axiom:smoothing1}\\
\|f-S_\rho(t)f\|_{q,\rho} &\leq& C_{\rho,p,q}
t^{q-p}\|f\|_{p,\rho}\label{axiom:smoothing2}
\end{eqnarray}
where $C_{\rho,p,q}$ is a positive constant (which does not depend
on $f$ nor on $t$) and which is continuous with respect to $\rho$.
\end{itemize}

In the same way as for the Fr\'echet spaces (see for instance
\cite{Sergeraert1972}), the two properties (\ref{axiom:smoothing1})
and (\ref{axiom:smoothing2}) of the smoothing operator imply the
following inequality called {\it interpolation inequality} : for any
positive integers $p$, $q$ and $r$ with $p \geq q \geq r$ we have
\begin{equation}
(\|f\|_{q,\rho})^{p-r} \leq C_{p,q,r} (\|f\|_{r,\rho})^{p-q}
(\|f\|_{p,\rho})^{q-r}\,, \label{eqn:interpolNM}
\end{equation}
where $C_{p,q,r}$ is a positive constant which is continuous with
respect to $\rho$ and does not depend on $f$.\\

\begin{example}
 The main
example that we have in mind is the space of functions in a
neighbourhood of $0$ in the Euclidean space $\bbR^n$: here $\rho$ is
the radius and $k$ is the smoothness class, i.e.
$\mathcal{H}_{k,\rho}$ is the space of $C^k$-functions on the closed
ball of radius $\rho$ and centered at $0$ in $\bbR^n$, together with
the maximal norm (of each function and its partial derivatives up to
order $k$); the projections are restrictions of functions to balls
of smaller radii. In the same way, others basic examples are given
by the differential forms or multivectors defined in a neighbourhood
of the origin in $\bbR^n$.
\end{example}

By an \emph{SCI-subspace} of an SCI-space $\mathcal{H}$, we mean a
collection $\mathcal{V}$ of subspaces $\mathcal{V}_{k,\rho}$ of
$\mathcal{H}_{k,\rho}$, which themselves form an SCI-space (under
the induced norms, induced smoothing operators, induced inclusion
and radius restriction operators from $\mathcal{H}$ - it is
understood that these structural operators preserve $\mathcal{V}$).

By a \emph{subset}  of an SCI-space $\mathcal{H}$, we mean a
collection $\mathcal{F}$ of subsets $\mathcal{F}_{k,\rho}$ of
$\mathcal{H}_{k,\rho}$, which are invariant under the inclusion and
radius restriction maps of $\mathcal{H}$.

\begin{rem}
Of course, if $\mathcal H$ is an SCI-space then each
$\mathcal{H}_{\infty,\rho}$ is a tame Fr\'{e}chet space.
\end{rem}

The above notion of SCI-spaces generalizes at the same time the
notion of tame Fr\'{e}chet spaces and the notion of scales of Banach
spaces \cite{Zehnder1975}. Evidently, the scale parameter is
introduced to treat local problems. When things are globally defined
(say on a compact manifold), then the scale parameter is not needed,
i.e. $\mathcal{H}_{k,\rho}$ does not depend on $\rho$ and we get
back to the situation of tame Fr\'{e}chet spaces, as studied by
Sergeraert \cite{Sergeraert1972} and Hamilton
\cite{Hamilton-Complex1977,Hamilton}.

\noindent {\bf SCI-groups.} An {\emph {SCI-group}} $\mathcal{G}$
consists of elements $\phi$ which are written as a (formal) sum
\begin{equation}
\phi = Id + \chi,
\end{equation}
where $\chi$ belongs to an SCI-space $\mathcal{W}$, together with
\emph{scaled group laws} to be made more precise below. We will say
that $\mathcal{G}$ is modelled on $\mathcal{W}$, if $\chi \in
\mathcal{W}_{k,\rho}$ then we say that $\phi = Id + \chi \in
\mathcal{G}_{k,\rho}$ and $\chi = \phi - Id$ (so as a space,
$\mathcal G$ is the same as $\mathcal W$, but shifted by $Id$), $Id
= Id + 0$ is the neutral element of $\mathcal{G}$.

\emph{Scaled composition (product) law}. There is a positive
constant $c$ (which does not depend on $\rho$ or $k$) such that if
$0 < \rho' < \rho \leq 1$, $k \geq 1$, and $\phi=Id+\chi \in
\mathcal{G}_{k,\rho}$ and $\psi=Id+\xi \in \mathcal{G}_{k,\rho}$
such that
\begin{equation}
\rho^\prime/\rho\leq 1-c\|\xi\|_{1,\rho} \label{cond:radius}
\end{equation}
then we can compose $\phi$ and $\psi$ to get an element $\phi \circ
\psi$ with $\|\phi\circ\psi-Id\|_{k,\rho^\prime}<\infty$, i.e.
$\phi\circ\psi$ can be considered as an element of
$\mathcal{G}_{k,\rho'}$ (if $\rho'' < \rho'$ then of course
$\phi\circ\psi$ can also be considered as an element of
$\mathcal{G}_{k,\rho''}$, by the restriction of radius from $\rho'$
to $\rho''$). Of course, we require the composition to be
\emph{associative} (after appropriate restrictions of radii).


\emph{Scaled inversion law}. There is a positive constant $c$ (for
simplicity, take it to be the same constant as in Inequality
(\ref{cond:radius})) such that if $\phi \in \mathcal{G}_{k,\rho}$
such that
\begin{equation} \label{cond:radius2}
\|\phi - Id\|_{1,\rho} < 1/c
\end{equation}
then we can define an element, denoted by $\phi^{-1}$ and called the
inversion of $\phi$, in $\mathcal{G}_{k,\rho'}$, where $\rho' = (1-
{1 \over 2}c\|\phi - Id\|_{1,\rho}) \rho $, which satisfies the
following condition: the compositions $\phi \circ \phi^{-1}$ and
$\phi^{-1} \circ \phi$ are well-defined in radius $\rho'' = (1-
c\|\phi - Id\|_{1,\rho})\rho$ and coincide with the neutral element
$Id$ there.

\emph{Continuity conditions}. We require that the above scaled group
laws satisfy the following continuity conditions i), ii) and iii) in
order for $\mathcal{G}$ to be called an SCI-group.

i) For each $k \geq 1$ there is a polynomial $P = P_k$ (of one
variable), such that for any $\chi \in \mathcal{W}_{2k-1,\rho}$ with
$\|\chi\|_{1,\rho} < 1/c$ we have
\begin{equation}
\|(Id+\chi)^{-1}-Id\|_{k,\rho^\prime}\leq \|\chi\|_{k,\rho}
P(\|\chi\|_{k,\rho}) \label{axiom:inverse}\ ,
\end{equation}
where $\rho' = (1 - c\|\chi\|_{1,\rho})\rho$.

ii) If $(\phi_m)_{m\geq 0}$ is a sequence in $\mathcal{G}_{k,\rho}$
which converges (with respect to $\|\,\|_{k,\rho}$) to $\phi$, then
the sequence $(\phi_m^{-1})_{m\geq 0}$ also converges to $\phi^{-1}$
in $\mathcal{G}_{k,\rho^\prime}$, where $\rho' = (1 - c\|\phi -
Id\|_{1,\rho})\rho$.


iii) For each $k \geq 1$ there are polynomials $P$, $Q$, $R$ and $T$
(of one variable)
such that if $\phi=Id+\chi$ and $\psi = Id + \xi$ are in
$\mathcal{G}_{k,\rho}$ and if $\rho^\prime$ and $\rho$ satisfy
Relation (\ref{cond:radius}), then we have the two inequalities
\begin{equation}
\|\phi\circ\psi-\phi\|_{k,\rho^\prime}\leq \|\xi\|_{k,\rho}
P(\|\xi\|_{k,\rho})+ \|\chi\|_{k+1,\rho} \|\xi\|_{k,\rho}
Q(\|\xi\|_{k,\rho})\,. \label{axiom:product}
\end{equation}
and
\begin{equation}
\|\phi\circ\psi-Id\|_{k,\rho^\prime}\leq \|\xi\|_{k,\rho}
R(\|\xi\|_{k,\rho})+ \|\chi\|_{k,\rho} \big( 1+ \|\xi\|_{k,\rho}
T(\|\xi\|_{k,\rho}) \big)\,. \label{conseq-axiom-product}
\end{equation}

\begin{rem}
One could think that (\ref{conseq-axiom-product}) is a consequence
of (\ref{axiom:product}). In fact, it is not. Indeed, if
(\ref{conseq-axiom-product}) was deduced from (\ref{axiom:product})
we would have a term $\| \chi \|_{k+1,\rho}$ instead of $\| \chi
\|_{k,\rho}$ i.e. a loss of differentiability. Apparently, it does
not look important but actually we will see in Appendix 1 that we
use (\ref{conseq-axiom-product}) repetitively in the proof of
Theorem, which could imply an kind of accumulation of loss of
differentiability.
\end{rem}

\begin{example} The main example of a SCI-group is given by the local differentiable
diffeomorphisms in a neighbourhood of 0 in $\mathbb{R}^n$. If we have
in mind this example, the relation (\ref{axiom:product}) is just a
consequence of the mean value theorem. These estimates are proved in
this case for instance in \cite{conn} or \cite{monnierzung2004}.
\end{example}

\noindent{\bf SCI-actions}. We will say that there is a \emph{linear
left SCI-action} of an SCI-group $\mathcal{G}$ on an SCI-space
$\mathcal{H}$ if there is a positive integer $\gamma$ (and a
positive constant $c$) such that, for each $\phi = Id + \chi \in
\mathcal{G}_{k,\rho}$ and $f \in \mathcal{H}_{k,\rho'}$ with $\rho'
= (1 - c\|\chi\|_{1,\rho})\rho$, the element $\phi. f$ (the image of
the action of $\phi$ on $f$) is well-defined in
$\mathcal{H}_{k,\rho'}$, the usual axioms of a left group action
modulo appropriate restrictions of radii (so we have \emph{scaled
action laws}) are satisfied, and the following inequalities
expressing some continuity conditions are also satisfied:

i) For each $k$ there are polynomials $Q$ and $R$ (which depend on
$k$) such that
\begin{eqnarray}\label{axiom:action2}
\|(Id+\chi)\cdot f\|_{2k-1,\rho^\prime} & \leq & \|f\|_{2k-1,\rho}
\big( 1+ \|\chi\|_{k+\gamma,\rho} Q(\|\chi\|_{k+\gamma,\rho}) \big) \\
 & & + \|\chi\|_{2k-1+\gamma,\rho}
\|f\|_{k,\rho} R(\|\chi\|_{k+\gamma,\rho}) \nonumber
\end{eqnarray}

ii) There is a polynomial function $T$ of 2 variables such that
\begin{equation}
\|(\phi+\chi)\cdot f-\phi\cdot f\|_{k,\rho^\prime} \leq
\|\chi\|_{k+\gamma,\rho} \|f\|_{k+\gamma,\rho} T(\|\phi -
Id\|_{k+\gamma,\rho}, \|\chi\|_{k+\gamma,\rho})
\label{axiom:action1}
\end{equation}

In the above inequalities, $\rho'$ is related to $\rho$ by a formula
of the type $\rho' = \left( 1 - c(\|\chi\|_{1,\rho}+ \|\phi
-Id\|_{1,\rho})\right) \rho$. ($\phi = Id$ in the first two
inequalities).

Note that a consequence of the property i) is the following
inequality, where $P$ is a polynomial function depending on $k$ :
\begin{equation} \label{axiom:action3}
\|(Id+\chi)\cdot f\|_{k,\rho^\prime} \leq \|f\|_{k,\rho} \big( 1+
\|\chi\|_{k+\gamma,\rho} P(\|\chi\|_{k+\gamma,\rho}) \big) \ .
\end{equation}

\begin{rem} Of course, we can define in the same way the notion of
\emph{linear right SCI-action}.
\end{rem}

\begin{example} The main examples of a SCI-action that we have in
mind is the  action of the SCI-group of local diffeomorphisms of
$(\bbR^n, 0)$ on the SCI-space of local tensors of a given type on
$(\bbR^n, 0)$.

If the tensors are for instance $k$-vectors fields, like in
\cite{conn} and \cite{monnierzung2004}, we have a left SCI-action by
push-forward. If the tensors are for instance smooth maps, like in
this paper, or differential forms, we get a right SCI-action.
\end{example}

\subsubsection{Normal form theorem} Roughly speaking, the following theorem
says that whenever we have a \lq\lq fast normalizing algorithm'' in
an SCI setting then it will lead to the existence of a smooth
normalization. \lq\lq Fast'' means that, putting loss of
differentiability aside, one can \lq\lq quadratize'' the error term
at each step (going from \lq\lq $\epsilon$-small'' error to \lq\lq
$\epsilon^2$-small'' error).

\begin{rem} In this technical part, we are going to adopt the following notations in order
to simplify the writing of equations.
\begin{itemize}
\item The notation $Poly(\| f \|_{k,r})$ denotes a polynomial term in $\| f \|_{k,r}$ where the polynomial has
positive coefficients and does not depend on $f$ (it may depend on
$k$ and on $r$ continuously).
\item The notation $Poly_{(p)}(\| f \|_{k,r})$, where $p$ is a strictly positive integer, denotes a polynomial term in
$\| f \|_{k,r}$ where the polynomial has positive coefficients and
does not depend on $f$ (it may depend on $k$ and on $r$
continuously) and {\it which contains terms of degree greater or
equal to} $p$.
\end{itemize}
\end{rem}

\begin{thm}
Let $\mathcal{T}$ be a SCI-space, $\mathcal{F}$ a SCI-subspace of
$\mathcal{T}$, and $\mathcal{S}$ a subset of $\mathcal{T}$. Denote
$\mathcal{N}=\mathcal{F}\cap\mathcal{S}$. Assume that there is a
projection $\pi: \mathcal{T}\longrightarrow \mathcal{F}$ (compatible
with restriction and inclusion maps) such that for every $f$ in
$\mathcal{T}_{k,\rho}$, the element $\zeta(f)=f-\pi(f)$ satisfies
\begin{equation}
\|\zeta(f)\|_{k,\rho}\leq \|f\|_{k,\rho}
Poly(\|f\|_{[(k+1)/2],\rho}) \label{eqn:proj}
\end{equation}
for all $k \in \bbN$ (or at least for all $k$ sufficiently large),
where $[\;]$ is the integer part.

Let $\mathcal{G}$ be an SCI-group acting on $\mathcal{T}$ by a
linear left SCI-action and let $\mathcal{G}^0$ be a closed subgroup of
$\mathcal{G}$ formed by elements preserving $\mathcal{S}$.

Let $\mathcal{H}$ be a SCI-space and assume that there exist maps
$\mathrm{H} : \mathcal{S} \longrightarrow \mathcal{H}$ and $\Phi :
\mathcal{H} \longrightarrow \mathcal{G}^0$ and an integer $s\in\N$
such that for every $0 < \rho \leq 1$, every $f$ in $\mathcal{S}$
and $g$ in $\mathcal{H}$, and for all $k$ in $\bbN$ (or at least for
all $k$ sufficiently large) we have the three properties :

\begin{eqnarray}
\| \mathrm{H}(f) \|_{k,\rho} &\leq& \|\zeta(f)\|_{k+s,\rho}
Poly(\|f\|_{[(k+1)/2]+s,\rho}) \label{eqn:estimate-H}\\
  & & +
\|f\|_{k+s,\rho}\|\zeta(f)\|_{[(k+1)/2]+s,\rho}
Poly(\|f\|_{[(k+1)/2]+s,\rho}) \ , \nonumber
\end{eqnarray}

\begin{equation}
\| \Phi(g) - Id \|_{k,\rho'} \leq \| g \|_{k+s,\rho}
Poly(\|g\|_{[(k+1)/2]+s,\rho}) \label{eqn:estimate-Exp}
\end{equation}
and
\begin{eqnarray}
\| \Phi(g_1)\,.f - \Phi(g_2)\,.f \|_{k,\rho'} & \leq & \| g_1 - g_2
\|_{k+s,\rho} \| f \|_{k+s,\rho}
Poly(\| g_1 \|_{k+s,\rho}, \| g_2 \|_{k+s,\rho}) \nonumber \\
& & \quad +  \| f \|_{k+s,\rho} Poly_{(2)}(\| g_1 \|_{k+s,\rho}, \| g_2
\|_{k+s,\rho}) \label{eqn:estimate-Exp-bis}
\end{eqnarray}
if $\rho'\leq \rho(1-c\|g\|_{2,\rho})$ in (\ref{eqn:estimate-Exp})
and $\rho'\leq \rho(1-c\|g_1\|_{2,\rho})$ and $\rho'\leq
\rho(1-c\|g_2\|_{2,\rho})$ in (\ref{eqn:estimate-Exp-bis}).

Finally, for every $f$ in $\mathcal{S}$ denote $\phi_f=Id + \chi_f =
\Phi\big( \mathrm{H}(f) \big) \in\mathcal{G}^0$ and assume that
there is a positive real number $\delta$ such that we have the
inequality

\begin{equation}
\|\zeta(\phi_f \,.\,f) \|_{k,\rho'} \leq
\|\zeta(f)\|_{k+s,\rho}^{1+\delta}
Q(\|f\|_{k+s,\rho},\|\chi_f\|_{k+s,\rho}, \|\zeta(f)\|_{k+s,\rho},\|f\|_{k,\rho})
\label{eqn:estimate-zeta}
\end{equation}
(if $\rho'\leq \rho(1-c\|\chi_f\|_{1,\rho})$) where $Q$
is a polynomial of four variables and whose degree in the first
variable does not depend on $k$
and with positive coefficients.\\

Then there exist $l\in\N$ and two positive constants $\alpha$ and
$\beta$ with the following property: for all $p \in \bbN \cup
\{\infty\}, p \geq l$, and for all $f\in\mathcal{S}_{2p-1,R}$ with
$\|f\|_{2l-1,R}<\alpha$ and $\|\zeta(f)\|_{l,R}<\beta$, there exists
$\psi\in\mathcal{G}^0_{p,R/2}$ such that $\psi\cdot f\in
\mathcal{N}_{p,R/2}$. \label{thm:Nash-Moser}
\end{thm}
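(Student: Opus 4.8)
The plan is to construct, by a Newton-type fast iteration, a sequence $f_d \in \mathcal{S}_{k,r_d}$ of structures together with group elements $\phi_d = \Phi(\mathrm{H}(f_d)) \in \mathcal{G}^0$, where $f_0 = f$ (suitably restricted) and $f_{d+1} = \phi_d \cdot f_d$, on a decreasing sequence of radii $r_d \searrow R/2$, and to show that the errors $\|\zeta(f_d)\|$ shrink super-exponentially while the accumulated diffeomorphisms $\Phi_d = \phi_0 \circ \cdots \circ \phi_d$ converge. Since $\mathcal{G}^0$ preserves $\mathcal{S}$, each $f_d$ stays in $\mathcal{S}$; and $\zeta(f_d) \to 0$ forces the limit $f_\infty$ to lie in $\mathcal{F}$, hence in $\mathcal{N} = \mathcal{F} \cap \mathcal{S}$. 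The estimate (\ref{eqn:estimate-zeta}) is precisely the statement that one step of this algorithm quadratizes (up to the exponent $1+\delta$) the error, modulo loss of differentiability, and (\ref{eqn:estimate-H}), (\ref{eqn:estimate-Exp}), (\ref{eqn:estimate-Exp-bis}) together with the SCI-group/action axioms control the sizes of $\chi_d = \phi_d - Id$ and the way norms propagate.

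First I would fix the bookkeeping: choose a rapidly increasing sequence of smoothness indices (so that the fixed loss $s$ from $\mathrm{H}$, $\Phi$ and the shift in the action axioms is absorbed at each stage), a geometric-type decreasing sequence of radii $r_d$ with $\sum (r_d - r_{d+1}) \le R/2$ so that the radius restriction condition $r_{d+1} \le r_d(1 - c\|\chi_d\|_{1,r_d})$ is satisfiable once $\|\chi_d\|$ is small, and a sequence of smoothing parameters $t_d$ (in the application $\mathrm{H}$ already incorporates a smoothing, but abstractly one uses the smoothing operator of $\mathcal{H}$ via the interpolation inequality (\ref{eqn:interpolNM})). The key inductive claim is a pair of estimates of the shape $\|\zeta(f_d)\|_{k_d, r_d} \le \epsilon_d$ with $\epsilon_{d+1} \le \epsilon_d^{1+\delta/2}$ (say), together with a crude a priori bound $\|f_d\|_{k_d, r_d} \le 2\|f_0\|$ and $\|f_d\|_{l, r_d}$ staying bounded, kept alive using (\ref{axiom:action3}) and the interpolation inequality to trade high-norm growth against low-norm smallness. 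The low-norm bound $\|\zeta(f)\|_{l,R} < \beta$ (separate from the high-norm bound $\|f\|_{2l-1,R} < \alpha$) is exactly what is needed to start and sustain this: $\beta$ small guarantees the quadratic gain dominates the polynomial factors $Q, Poly$, while $\alpha$ merely bounds the differentiability reservoir.

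Concretely, from (\ref{eqn:estimate-H}) one bounds $\|\mathrm{H}(f_d)\|_{k,r_d}$ in terms of $\|\zeta(f_d)\|$ and $\|f_d\|$ at shifted indices; feeding this into (\ref{eqn:estimate-Exp}) bounds $\|\chi_d\|_{k, r_{d+1}}$, and in particular $\|\chi_d\|_{1,r_d}$ is of the order of $\epsilon_d$ (times polynomial factors), so it is summable and the radius condition holds. Then (\ref{eqn:estimate-zeta}) gives $\|\zeta(f_{d+1})\|_{k, r_{d+1}} \le \epsilon_d^{1+\delta} \cdot Q(\cdots)$; one checks that for $k$ in the chosen range the arguments of $Q$ are under control (high-norm arguments by the crude a priori bound, low-norm arguments by $\beta$), so the right-hand side is $\le \epsilon_{d+1}$ for a super-exponential sequence $\epsilon_d$. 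Convergence of $\Phi_d$ in every $\mathcal{G}^0_{p, R/2}$ follows from (\ref{axiom:product}) resp. (\ref{conseq-axiom-product}): the crucial point, flagged in the remark after (\ref{conseq-axiom-product}), is that composing with $\phi_d$ on the right costs only $\|\chi_d\|_{k, r_d}$ (no extra derivative), so the telescoping sum $\sum_d \|\Phi_{d} - \Phi_{d-1}\|_{p, r_{d+1}}$ converges for each fixed $p$; continuity of inversion (axiom ii)) then gives $\psi = \lim \Phi_d \in \mathcal{G}^0_{p, R/2}$. Passing to the limit, $\psi \cdot f = f_\infty$ with $\zeta(f_\infty) = 0$, i.e. $\psi \cdot f \in \mathcal{N}_{p, R/2}$.

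The main obstacle, as always in Nash–Moser arguments, is the simultaneous control of the two competing quantities: the error $\|\zeta(f_d)\|$ must decrease quadratically \emph{and} the high-order norms $\|f_d\|_{k_d, r_d}$, which a priori grow because smoothing inflates them and the loss $s$ forces $k_d$ upward, must not grow so fast that they overwhelm the quadratic gain. Balancing the growth rate of $k_d$, the decay of $r_d$, and the smoothing parameters $t_d$ so that the interpolation inequality (\ref{eqn:interpolNM}) closes the induction is the delicate calculation; this is where the precise polynomial (rather than merely super-linear) dependence in the hypotheses, and the distinction between the index $k+s$ and $[(k+1)/2]+s$ throughout (\ref{eqn:estimate-H})–(\ref{eqn:estimate-zeta}), gets used in an essential way. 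The determination of $l$ (and the admissible $\alpha, \beta$) emerges from making all these inequalities compatible, and $p \ge l$ arbitrary (including $p = \infty$) is recovered because the iteration, once started, produces estimates at all sufficiently high smoothness indices simultaneously.
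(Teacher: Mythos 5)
Your overall architecture (Newton iteration on shrinking radii $r_d\searrow R/2$, a super-exponentially decaying error $\|\zeta(f_d)\|$ at a fixed low index, a crude high-norm reservoir traded against it via the interpolation inequality, and convergence of the composed diffeomorphisms) is the same as the paper's, but there is a genuine gap in where the smoothing enters. You define the iteration step as $f_{d+1}=\phi_d\cdot f_d$ with $\phi_d=\Phi(\mathrm{H}(f_d))$ and remark parenthetically that ``abstractly one uses the smoothing operator of $\mathcal{H}$ via the interpolation inequality.'' That is not enough: the interpolation inequality is a consequence of the smoothing axioms, not a substitute for smoothing the increment. The paper's iteration is $f^{d+1}=\hat\phi_d\cdot f^d$ with $\hat\phi_d=\Phi\bigl(S(t_d)\mathrm{H}(f^d)\bigr)$, and this is forced: to propagate the high-norm bound $\|f^d\|_{2l-1,r_d}<t_d^A$ one needs $\|\hat\chi^d\|_{2l-1+s}$, which via (\ref{eqn:estimate-Exp}) requires $\|S(t_d)\mathrm{H}(f^d)\|_{2l-1+2s}$; only the smoothing estimate (\ref{axiom:smoothing1}) lets one trade the derivative excess for a power $t_d^{4s}$ and land back on $\|\mathrm{H}(f^d)\|_{2l-1-2s}$, which is within the available smoothness. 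With the unsmoothed $\phi_d$ this chain needs $\|\zeta(f^d)\|_{2l-1+3s}$, which you do not control, and the induction does not close. Once you do smooth, a second issue appears that your proposal never addresses: hypothesis (\ref{eqn:estimate-zeta}) quadratizes the error only for the \emph{unsmoothed} $\phi_f=\Phi(\mathrm{H}(f))$, so one must split $\zeta(\hat\phi_d\cdot f^d)=\zeta(\phi_d\cdot f^d)+\zeta(\hat\phi_d\cdot f^d-\phi_d\cdot f^d)$ and control the second term by (\ref{eqn:estimate-Exp-bis}) together with (\ref{axiom:smoothing2}). This is the entire raison d'\^etre of hypothesis (\ref{eqn:estimate-Exp-bis}), which your proposal lists but assigns no role.

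A secondary point: your telescoping argument for the convergence of $\Phi_d=\phi_0\circ\cdots\circ\phi_d$ appends the new factor on the right, which matches a \emph{right} SCI-action (and the momentum-map application), but the theorem is stated for a left action, where the conjugating sequence is $\psi_d=\hat\phi_{d-1}\circ\cdots\circ\hat\phi_0$ with new factors on the left. The composition axiom (\ref{axiom:product}) only controls $\|\phi\circ\psi-\phi\|$, i.e.\ perturbation of the right-hand factor, so $\|\psi_{d+1}-\psi_d\|$ is not directly estimable; the paper circumvents this by proving convergence of $\psi_d^{-1}=\hat\phi_0^{-1}\circ\cdots\circ\hat\phi_{d-1}^{-1}$ (first showing boundedness via (\ref{conseq-axiom-product}), then Cauchy via (\ref{axiom:product})) and invoking continuity of inversion. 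Your sketch would need this detour, or an explicit restriction to right actions.
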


\begin{rem} Of course, this theorem still works if we have a linear {\it right} SCI-action
and in facts, as we will see later, the proof in this case is a bit
easier.
\end{rem}

\begin{rem} It is necessary to try to explain the role that play all the SCI-spaces
of this theorem.
\begin{enumerate}
\item[$\bullet$] $\mathcal{T}$ is the space of "tensors" (for instance 2-vectors, smooth maps,
differential forms, etc.).
\item[$\bullet$] $\mathcal{F}$ is the subspace of normal forms in $\mathcal{T}$ (for instance {\it linear} 2-vectors,
 etc.).
\item[$\bullet$] $\mathcal{S}$ is the set of structures (like Poisson structures, momentum maps, etc.).
\item[$\bullet$] $\mathcal{N}$ is the set of normal forms in $\mathcal{S}$ (like linear Poisson structures,
 etc.).
\item[$\bullet$] $\mathcal{G}$ represents the group of local diffeomorphisms acting on $\mathcal{T}$.
\end{enumerate}

Actually, even if $\mathcal{G}^0$ is a subgroup of $\mathcal{G}$, it does not need to be an SCI-group ;
but it is important that it is closed.

Finally, the raison d'\^{e}tre of the SCI-space $\mathcal{H}$ is purely
technical. Indeed, the estimates given in the definition of the
SCI-actions and in the hypothesis of the theorem make appear a loss
of differentiability. A classical idea to compensate this loss of
differentiability is to use the smoothing operators. But it is not
always possible to apply these smoothing operators directly in the
SCI-group ; that is why we apply them in the intermediary SCI-space
$\mathcal{H}$.
\end{rem}

\begin{rem}
We can illustrate this theorem with the basic example of
linearization of smooth Poisson structures proved by J. Conn in
\cite{conn}. In this case, $\mathcal{T}$ is the SCI-space of
bivectors fields, $\mathcal{F}$ the subspace of linear bivectors,
$\mathcal{S}$ the subset of Poisson structures, $\mathcal{N}$ the
subset of linear Poisson structures. The group $\mathcal{G}$ is the
group of local diffeomorphisms (and $\mathcal{G}^0=\mathcal{G}$) ;
the action is given by the push-forward. The SCI-space $\mathcal{H}$
is given by the smooth vector fields and the map $\mathrm{H} :
\mathcal{S} \longrightarrow \mathcal{H}$ is defined by
$\mathrm{H}(\pi)=h(\pi-\pi^{(1)})$ where $h$ is
the homotopy operator defined by Conn and $\pi^{(1)}$ is the linear part of $\pi$.
Finally, the map $\Phi : \mathcal{H}
\longrightarrow \mathcal{G}$ is defined by $\Phi(X)= Id + X$.
\end{rem}

\begin{rem} The estimates of the theorem come from trying generalizing some concrete
examples of normal forms (linearization of Poisson structures in
\cite{conn}, Levi decomposition of Poisson structures in
\cite{monnierzung2004} and the rigidity of momentum maps in this
paper), that is why they look artificially complicated. There must
be a clever way to present this theorem, we have looked for it but
we didn't find.

The small shift $+s$ in the estimates of the theorem is needed to
compensate the loss of differentiability that appears initially in
the paper of J. Conn \cite{conn} when he constructs the homotopy
operator. In the same way, the shift $+\gamma$ in the axiom of
SCI-actions may appear when writing explicitly the estimates of
particular SCI-actions (see \cite{monnierzung2004}).

In some estimates of this theorem and also in the axiom
(\ref{axiom:action2}) we can notice the presence at the same time of
the smoothness degrees $k$ and $[(k+1)/2]$ (or $k$ and $2k-1$). We
will see later (see Appendix 1) that to show the $\|\,\|_k$-convergence of the
sequence of \lq\lq diffeomorphisms" $(\Psi_d)$, we need to control
their $(2k-1)$-differentiability too.
\end{rem}

\begin{rem}
If we forget the radius $\rho$ in the SCI-formalism and in Theorem \ref{thm:Nash-Moser}, we get
normal form result for Fr\'{e}chet spaces that could be used in global cases. This result does not seem
to be a consequence of the Nash-Moser Implicit Function Theorem of Hamilton, nor any of his results of this
type (\cite{Hamilton-Complex1977},\cite{Hamilton}).
\end{rem}

\subsubsection{Affine version of Theorem \ref{thm:Nash-Moser}.}
\label{Affineversion}
Stated in that way, Theorem \ref{thm:Nash-Moser} cannot be applied
directly to our situation of rigidity of momentum maps. In fact, we
can state a kind of {\it affine} version of this theorem that we
will be able to apply. The formulation is very close to the original
one. The notations are the same but we pick here an element $\mathsf{f_O}$ in
$\mathcal{S}$ ($\subset \mathcal{T}$) that will be considered as the
{\it origin} in $\mathcal{T}$.

Now, the formulation of the affine version of Theorem
\ref{thm:Nash-Moser} is exactly the same. We just have to add a term $-\mathsf{f_O}$
in the norms of elements in $\mathcal{T}$ (but not for the elements of $\mathcal{G}$ or $\mathcal{H}$ !)
in some estimates of the theorem.

Namely, (\ref{eqn:proj}) becomes
\begin{equation}
\|\zeta(f)-\mathsf{f_O}\|_{k,\rho}\leq \|f-\mathsf{f_O}\|_{k,\rho}
Poly(\|f-\mathsf{f_O}\|_{[(k+1)/2],\rho}) \label{eqn:projAffine}
\end{equation}

the estimate (\ref{eqn:estimate-H}) becomes
\begin{eqnarray}
\| \mathrm{H}(f) \|_{k,\rho} &\leq & \| \zeta(f)-\mathsf{f_O} \|_{k+s,\rho}
Poly(\| f-\mathsf{f_O} \|_{[(k+1)/2]+s,\rho}) \nonumber \\
  & & +
\| f-\mathsf{f_O} \|_{k+s,\rho}\| \zeta(f)-\mathsf{f_O} \|_{[(k+1)/2]+s,\rho}
Poly(\| f-\mathsf{f_O} \|_{[(k+1)/2]+s,\rho}) \ , \label{eqn:estimate-HAffine}
\end{eqnarray}

and (\ref{eqn:estimate-zeta}) becomes
\begin{equation}
\| \zeta(\phi_f \,.\,f) -\mathsf{f_O} \|_{k,\rho'} \leq
\| \zeta(f)-\mathsf{f_O} \|_{k+s,\rho}^{1+\delta}
Q(\| f-\mathsf{f_O} \|_{k+s,\rho},\| \chi_f \|_{k+s,\rho}, \| \zeta(f)-\mathsf{f_O} \|_{k+s,\rho},
\| f-\mathsf{f_O} \|_{k,\rho})\,.
\label{eqn:estimate-zetaAffine}
\end{equation}

The two estimates (\ref{eqn:estimate-Exp})  and (\ref{eqn:estimate-Exp-bis}) are not changed.

The conclusion is then : {\it There exist $l\in\N$ and two positive
constants $\alpha$ and $\beta$ ($\beta<1<\alpha$) with the following property: for all
$p \in \bbN \cup \{\infty\}, p \geq l$, and for all
$f\in\mathcal{S}_{2p-1,R}$ with $\| f-\mathsf{f_O} \|_{2l-1,R}<\alpha$ and
$\| \zeta(f)-\mathsf{f_O} \|_{l,R}<\beta$, there exists
$\psi\in\mathcal{G}^0_{p,R/2}$ such that $\psi\cdot f\in
\mathcal{N}_{p,R/2}$.}

\begin{rem}
In this affine version, we don't change the estimates (\ref{eqn:estimate-Exp}) and (\ref{eqn:estimate-Exp-bis}).
For  (\ref{eqn:estimate-Exp}), it is natural because it does not involve any element of $\mathcal{T}$.
The justification for (\ref{eqn:estimate-Exp-bis}) will be given in Remark \ref{rem:proofAffine} in Appendix 1.
\end{rem}

\subsection{Proof of Theorem  \ref{thm:maintheoremlocal}}

The proof of this theorem is just an application of the general
normal form Theorem \ref{thm:Nash-Moser}. We explain here how we
can adapt the formalism above to our situation. In
fact, as we said before, we are going to apply the affine version of the general normal
form theorem, see the subsection \ref{Affineversion}.

Recall that we have a Poisson structure $\{\,,\,\}$ in a neighbourhood $U$ of 0 in $\mathbb{R}^n$,
a semisimple real Lie algebra of compact type $\fg$ and a momentum map $\lambda : U\longrightarrow \fg^\ast$.
For each positive real number $r$, we denote by $B_r$ the closed ball of radius $r$ and center 0.

We first define the SCI-space $\mathcal{T}$  by the spaces $\mathcal{T}_{k,r}$ of $C^k$-differentiable
maps from the balls $B_r$ to $\fg^\ast$ equipped with the norms $\|\,\|_{k,r}$ defined above.
The subset $\mathcal{S}$ is given by the momentum maps with respect to the Poisson structure.
Of course, we choose $\lambda$ as the origin ($\mathsf{f_O}$ in the formulation of the theorem) of the affine space
and $\mathcal{F}=\mathcal{N}=\{0\}$ (and $\pi=0$). The estimate (\ref{eqn:projAffine}) is then obvious.

The SCI-group $\mathcal{G}$ consists of the local $C^k$-diffeomorphisms on the balls $B_r$ and the action is
the classical right action :
$\phi\cdot \mu :=\mu\circ\phi$ with $\phi\in \mathcal{G}$ and $\mu\in \mathcal{T}$. One can check the
axioms of SCI-action looking at \cite{conn}, \cite{monnierzung2004} and also in Appendix 2.
The closed subgroup $\mathcal{G}^0$ of $\mathcal{G}$ is given by the Poisson diffeomorphisms (i.e.
diffeomorphisms preserving the Poisson structure). It is clear that the elements of $\mathcal{G}^0$
preserves $\mathcal{S}$.

We define the SCI-space $\mathcal{H}$ by the spaces
$\mathcal{H}_{k,r}$ of $C^k$-differentiable functions on the balls
$B_r$. The application $\mathrm{H} : \mathcal{S}\longrightarrow
\mathcal{H}$ is defined as follows. A $C^k$-differentiable map $\mu
: B_r\longrightarrow \fg^\ast$ can be obviously viewed as a
1-cochain in the Chevalley-Eilenberg complex defined in Section \ref{Chevalley-Eilenberg}. The
image of $\mu$ by $\mathrm{H}$ is then just $h_0(\mu-\lambda)$ where
$h_0$ is the homotopy operator given in Lemma \ref{lemmedeconn}. The
relation (\ref{eqn:estimate-HAffine}) is then obvious.

Finally, for every
element $g$ of $\mathcal{H}$, we denote by $X_g$ the Hamiltonian
vector field associated to $g$ with respect to the Poisson structure
(i.e. $X_g=\{g\,,\,\}$). We then define $\Phi(g)=\phi_{X_g}^1$ the
time-1 flow of the vector field $X_g$. Of course, by definition, the
diffeomorphims $\Phi(g)$ preserves the Poisson structure and the
set of momentum maps $\mathcal{S}$.

Now, we just have to check
that the estimates (\ref{eqn:estimate-Exp}),
(\ref{eqn:estimate-Exp-bis}) and (\ref{eqn:estimate-zetaAffine}) are
satisfied. These three estimates are direct consequences of the
lemmas \ref{lem:estchi}, \ref{lem:estimate-Exp-bis} and
\ref{lem:quadraticerror} given in the Appendix 2. The affine version
of Theorem \ref{thm:Nash-Moser} (see Section \ref{Affineversion}) then gives the result.

\subsection{Proof of Theorems  \ref{thm:mainsemilocal} and \ref{thm:mainglobal}}

In  the semilocal case (neighbourhood of invariant compact
submanifold $N$),  we can use exactly the same procedure replacing
balls of radius $r$ by  a $\varepsilon$-neighbourhood of the compact
invariant submanifold $N$ in the definition of the norms implied.
All the technical lemmas \ref{lem:estchi},
\ref{lem:estimate-Exp-bis} and \ref{lem:quadraticerror} given in the
Appendix 2 work.  A quick way to see that is using normal
coordinates via the exponential map and arguing in the same way as
in the local case (by considering balls in the normal fibers to the
submanifold $N$). In particular this proves that the estimates in
(\ref{eqn:estimate-Exp}), (\ref{eqn:estimate-Exp-bis}) and
(\ref{eqn:estimate-zeta}) are satisfied for these  norms.

Now we can apply exactly the same scheme of proof that we did for
the local case to apply Theorem \ref{thm:Nash-Moser} in Section
\ref{SCI}.

For the global compact case: Indeed this case is easier because we
could approach the initial proof using the iteration. The loss of
differentiability is easy to control in this case since the radius
of the ball does not shrink in each step of the iteration.

Alternatively we can also use the SCI theorem. For this we need to
check that estimates are also satisfied with these norms. Out of
compassion for the reader, we just give the general idea of how to
do this: the norms we use are defined on the compact manifold $M$
but can be easily related to the norms $\norm . \norm _{k,s}$ on
each ball via an adequate partition of unity $(U_i,\phi_i)$ of the
manifold $M$. Therefore, since we have the estimates for the norms
$\norm . \norm _{k,s}$ we also have the estimates in
(\ref{eqn:estimate-Exp}), (\ref{eqn:estimate-Exp-bis}) and
(\ref{eqn:estimate-zeta}) for the norms defined on $M$. We now
reproduce exactly the same proof as we did in the local  case and we
apply Theorem \ref{thm:Nash-Moser}.

\begin{rem}
A natural idea in order to prove the rigidity in the global case
could be to use the implicit function theorem or one of the
Nash-Moser type results of Hamilton (see \cite{Hamilton-Complex1977}
or \cite{Hamilton}) or Sergeraert (\cite{Sergeraert1972}). We tried
to do that but we did not succeed.
\end{rem}

\section{Appendix 1 : Proof of Theorem \ref{thm:Nash-Moser}}

We construct, by induction, a sequence ${(\psi_d)}_{d\geq 1}$ in
$\mathcal{G}^0$, and then a sequence $f^d:= \psi_{d}\cdot f$ in
$\mathcal{S}$, which converges to $\psi_\infty \in
\mathcal{G}^0_{p,R/2}$ (since $\mathcal{G}^0$ is closed) and such
that $f^\infty:=\psi_\infty \cdot f\in \mathcal{N}_{p,R/2}$ (i.e.
$\zeta(f^\infty)=0$).

In order to simplify, we can assume that the constant $s$ of the
theorem is the same as the integer $\gamma$ defined by the
SCI-action of $\mathcal{G}$ on $\mathcal{H}$ (see
(\ref{axiom:action2}), (\ref{axiom:action1}) and
(\ref{axiom:action3})). We first fix some parameters. Let $A=8s+5$
(actually, $A$ just has to be strictly larger than $8s+4$). Recall
that $\tau$ and $\delta$ are introduced in the statement of Theorem
\ref{thm:Nash-Moser}. We consider a positive real number
$\varepsilon <1$ such that
\begin{equation}
-(1-\varepsilon)+A\varepsilon<-\frac{4}{5}\,. \label{cond:Aepsilon}
\end{equation}
and
\begin{equation}
-\delta(1-\varepsilon)<-\frac{7}{10}\,. \label{cond:AepsilonBis}
\end{equation}

Finally, we fix a positive integer $l>6s+1$ which satisfies
\begin{equation}
\frac{3s+3}{l-1}(1+\delta +\tau)<\varepsilon\,.
\label{cond:lsespsilon}
\end{equation}
and
\begin{equation}
-\frac{8}{5} + A\frac{s}{l-1} < -\frac{3}{2}\,.
\label{cond:lsespsilonBis}
\end{equation}
The definition of the parameters $A$, $\varepsilon$, $l$ and $s$ by
the inequalities above has a purely technical origin and will be
used in the proofs of the two technical lemmas given later.

The construction of the sequences is the following : Let $t_0>1$ be
a real constant~; this constant is still not really fixed and will
be chosen according to Lemma \ref{lem:NM1}. We then define the
sequence ${(t_d)}_{d\geq 0}$ by $t_{d+1}:=t_d^{3/2}$. We also define
the sequence $r_d:=(1+\frac{1}{d+1})R/2$. This is a decreasing
sequence such that $R/2 \leq r_d\leq  R$ for all $d$. Note that we
have $r_{d+1}=r_d(1-\frac{1}{(d+2)^2})$. We will see later that,
technically, in order to use the relations (\ref{eqn:estimate-Exp})
and (\ref{eqn:estimate-Exp-bis}) we have to define an intermediate
sequence of radii :
$\rho_d:=r_d\big(1-\frac{1}{2}\frac{1}{(d+2)^2}\big)$. Of course, we
have $r_{d+1}\leq \rho_d \leq r_d$ for all $d$.

Let $p\geq l$ and $f$ in $\mathcal{S}_{2p-1,R}$. We start with
$f_0:=f\in \mathcal{S}_{2p-1,R}$. Now, assume that we have
constructed $f^d\in \mathcal{S}_{2p-1,r_d}$ for $d\geq 0$. We put
$\phi_{d}:=\Phi\big(\mathrm{H}(f^d)\big)=Id+\chi^{d}$ and ${\hat
\phi}_{d}:= \Phi\big(S(t_d)\mathrm{H}(f^d)\big)  =Id+{\hat
\chi}^{d}$. Then, $f^{d+1}$ is defined by
\begin{equation}
f^{d+1}={\hat \phi}_{d}\cdot f^d\,.
\end{equation}
Roughly speaking, the idea is that the sequence ${(f^d)}_{d\geq 0}$
will satisfy, grosso modo :
\begin{equation}
\|\zeta(f^{d+1})\|_{p,r_{d+1}} \leq
\|\zeta(f^d)\|_{p,r_d}^{1+\delta}\,.
\end{equation}

For every $d\geq 1$, we put $\psi_d={\hat
\phi}_{d-1}\circ\hdots\circ{\hat \phi}_0$. We then have to show that we
can choose two positive constants $\alpha$ and $\beta$ such that if
$\|f\|_{2l-1,R}\leq\alpha$ and $\|\zeta(f)\|_{l,R}\leq\beta$ then, the
sequence ${(\psi_d)}_{d\geq 1}$ converges with respect to
$\|\,\|_{p,R/2}$. It will follow from these two technical lemmas
that we will prove later :

\begin{lem}
There exists a real number $t_0>1$ such that for any
$f\in\mathcal{F}_{2p-1,r_0}$ satisfying the conditions
$\|f^0\|_{2l-1,r_0}< t_0^A$, $\|\zeta(f^0)\|_{2l-1,r_0}< t_0^A$ and
$\|\zeta(f^0)\|_{l,r_0}< t_0^{-1}$ then, with the construction
above, we have for all $d\geq 0$,
\begin{itemize}
    \item [$(1_d)$] $\quad \|{\hat \chi}^{d}\|_{l+s,\rho_d}< t_d^{-1/2}$
    \item [$(2_d)$] $\quad \|f^d\|_{l,r_d}< C \frac{d+1}{d+2}$ where $C$ is a
    positive constant
    \item [$(3_d)$] $\quad \|f^d\|_{2l-1,r_d} < t_d^A$
    \item [$(4_d)$] $\quad \|\zeta(f^d)\|_{2l-1,r_d}< t_d^A$
    \item [$(5_d)$] $\quad \|\zeta(f^d)\|_{l,r_d}< t_d^{-1}$
\end{itemize}
\label{lem:NM1}
\end{lem}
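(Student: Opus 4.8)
The plan is to establish the five estimates $(1_d)$--$(5_d)$ simultaneously, by induction on $d$, having first fixed $t_0$ large enough (and $C$ a suitable constant). For $d=0$ the estimates $(2_0)$--$(5_0)$ are essentially the three hypotheses on $f^0=f$ (using $\|f^0\|_{l,r_0}\le\|f^0\|_{2l-1,r_0}$ for $(2_0)$), and $(1_0)$ follows from the bound on $\hat\chi^0=\Phi(S(t_0)\mathrm{H}(f^0))-\mathrm{Id}$ obtained below with $d=0$. In the inductive step, assuming $(1_d)$--$(5_d)$, I would derive $(2_{d+1})$--$(5_{d+1})$ for $f^{d+1}=\hat\phi_d\cdot f^d$, and only afterwards $(1_{d+1})$, since $\hat\chi^{d+1}$ is built out of $f^{d+1}$.

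The bound $(1_d)$ itself comes from chaining the estimate (\ref{eqn:estimate-Exp}) for $\Phi$ (which loses $s$ derivatives) with the estimate (\ref{eqn:estimate-H}) for $\mathrm{H}$ (which loses another $s$), the smoothing axiom (\ref{axiom:smoothing1}), and the interpolation inequality (\ref{eqn:interpolNM}): $\mathrm{H}(f^d)$ is proportional to $\zeta(f^d)$, which is $\sim t_d^{-1}$ at smoothness $l$ by $(5_d)$ and only $\sim t_d^A$ at smoothness $2l-1$ by $(4_d)$, so interpolation bounds $\|\mathrm{H}(f^d)\|_{l+2s,r_d}$ by a small power of $t_d$, whence $\|\hat\chi^d\|_{l+s,\rho_d}<t_d^{-1/2}$ once $t_d$ is large; here one uses that $l$ is big enough that every intermediate smoothness index appearing stays $\le 2l-1$, which is what $l>6s+1$ and conditions (\ref{cond:lsespsilon})--(\ref{cond:lsespsilonBis}) guarantee. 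For $(2_{d+1})$ I would apply the action estimate (\ref{axiom:action3}) with $k=l$ to $f^{d+1}=(\mathrm{Id}+\hat\chi^d)\cdot f^d$ and use $(1_d)$ to get $\|f^{d+1}\|_{l,r_{d+1}}\le\|f^d\|_{l,r_d}\bigl(1+t_d^{-1/2}\,Poly(t_d^{-1/2})\bigr)$; since $t_{d+1}=t_d^{3/2}$ the quantities $t_d^{-1/2}$ decay doubly exponentially, so $\prod_{j\ge0}\bigl(1+t_j^{-1/2}Poly(t_j^{-1/2})\bigr)$ converges and, with $t_0$ large, each factor is close enough to $1$ to keep $\|f^d\|_{l,r_d}$ below $C\frac{d+1}{d+2}$.

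The estimates $(3_{d+1})$ and $(4_{d+1})$ control the (possibly large) high-order norms. Applying the action estimate (\ref{axiom:action2}) with $k=l$ bounds $\|f^{d+1}\|_{2l-1,r_{d+1}}$ by $\|f^d\|_{2l-1,r_d}(1+\text{small})$ plus $\|\hat\chi^d\|_{2l-1+s,\rho_d}\,\|f^d\|_{l,\rho_d}\,R(\|\hat\chi^d\|_{l+s,\rho_d})$; the high norm $\|\hat\chi^d\|_{2l-1+s,\rho_d}$ is where the smoothing operator is essential --- going through (\ref{eqn:estimate-Exp}) and then (\ref{axiom:smoothing1}) it costs a controlled power of $t_d$, while $\mathrm{H}(f^d)$ supplies the small factor from $\zeta(f^d)$, and one needs the net exponent to stay below $\tfrac32A$, which is exactly why $A$ is taken as large as $8s+5$ and why (\ref{cond:lsespsilon}) and (\ref{cond:lsespsilonBis}) bound $s/(l-1)$; then $(4_{d+1})$ follows from $(3_{d+1})$, $(2_{d+1})$ and the projection estimate (\ref{eqn:proj}). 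The heart of the argument is $(5_{d+1})$, the actual quadratic contraction: since the iteration uses the smoothed flow $\hat\phi_d$ rather than $\phi_{f^d}=\Phi(\mathrm{H}(f^d))$, I would first compare $\hat\phi_d\cdot f^d$ with $\phi_{f^d}\cdot f^d$ via (\ref{eqn:estimate-Exp-bis}), the difference of the arguments being $(\mathrm{Id}-S(t_d))\mathrm{H}(f^d)$, which is small by (\ref{axiom:smoothing2}); this reduces matters to the quadratic estimate (\ref{eqn:estimate-zeta}) applied to $\phi_{f^d}\cdot f^d$, namely $\|\zeta(\phi_{f^d}\cdot f^d)\|_{l,\rho_d}\le\|\zeta(f^d)\|_{l+s,r_d}^{1+\delta}\,Q(\dots)$, where $\|\zeta(f^d)\|_{l+s,r_d}$ is obtained by interpolating $(5_d)$ against $(4_d)$ (giving $\sim t_d^{-1+O(s/(l-1))}$) and the four arguments of $Q$ are bounded using $(1_d)$--$(5_d)$. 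Collecting the powers of $t_d$, one needs the exponent of $t_d$ in the final bound for $\|\zeta(f^{d+1})\|_{l,r_{d+1}}$ to be strictly less than $-\tfrac32$, so that it is $<t_d^{-3/2}=t_{d+1}^{-1}$: this is precisely the role of the numerical conditions (\ref{cond:Aepsilon}), (\ref{cond:AepsilonBis}), (\ref{cond:lsespsilon}) and (\ref{cond:lsespsilonBis}) on $\varepsilon$, $l$, $A$ (and on $\delta$, $\tau$). Finally $(1_{d+1})$ is obtained from $(2_{d+1})$--$(5_{d+1})$ exactly as $(1_0)$ from the hypotheses. The main obstacle throughout is this bookkeeping: one must track simultaneously the contracting low-order norms and the slowly growing high-order norms across the estimates for $\mathrm{H}$, $\Phi$ and the action, and verify that every exponent of $t_d$ falls on the correct side of these inequalities; the smoothing operators together with the gap between the smoothness levels $l$ and $2l-1$ are what make it possible, and the opaque-looking constraints on $A$, $\varepsilon$ and $l$ are exactly what these inequalities force.
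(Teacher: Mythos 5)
Your plan reproduces the paper's proof essentially step for step: the same induction, the same chaining of (\ref{eqn:estimate-Exp}), (\ref{eqn:estimate-H}), (\ref{axiom:smoothing1}) and interpolation for the bound on $\hat\chi^d$, the same use of (\ref{axiom:action3}) and (\ref{axiom:action2}) for $(2_{d+1})$ and $(3_{d+1})$, and the same decomposition of $\zeta(\hat\phi_d\cdot f^d)$ into $\zeta(\phi_d\cdot f^d)$ plus the smoothing-error term handled by (\ref{eqn:estimate-Exp-bis}) and (\ref{axiom:smoothing2}) for $(5_{d+1})$, with the exponents of $t_d$ falling on the right side of the conditions (\ref{cond:Aepsilon})--(\ref{cond:lsespsilonBis}). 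The only remaining work is the explicit exponent arithmetic and the small radius check $\|S(t_d)\mathrm{H}(f^d)\|_{2,r_d}<1-\rho_d/r_d$ needed to invoke (\ref{eqn:estimate-Exp}), both of which the paper carries out and your outline correctly anticipates.
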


\begin{lem}
Suppose that for an integer $k\geq l$, there exists a constant $C_k$
and an integer $d_k\geq 0$ such that for any $d\geq d_k$ we have
$\|f^d\|_{2k-1,r_d}<C_k t_d^A$, $\|\zeta(f^d)\|_{2k-1,r_d}<C_k
t_d^A$, $\|f^d\|_{k,r_d}< C_k\frac{d+1}{d+2}$ and
$\|\zeta(f^d)\|_{k,r_d}< C_k t_d^{-1}$. Then, there exists a
positive constant $C_{k+1}$ and an integer $d_{k+1} > d_k$ such that
for any $d\geq d_{k+1}$ we have
\begin{itemize}
    \item [(i)]   $\quad \|{\hat \chi}^{d}\|_{k+1+s,\rho_d}< C_{k+1} t_d^{-1/2}$
    \item [(ii)]  $\quad \|f^d\|_{k+1,r_d}< C_{k+1}\frac{d+1}{d+2}$
    \item [(iii)] $\quad \|f^d\|_{2k+1,r_d} < C_{k+1} t_d^A$
    \item [(iv)]  $\quad \|\zeta(f^d)\|_{2k+1,r_d}< C_{k+1} t_d^A$
    \item [(v)]   $\quad \|\zeta(f^d)\|_{k+1,r_d}< C_{k+1} t_d^{-1}$
\end{itemize}
\label{lem:NM2}
\end{lem}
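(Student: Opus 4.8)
The plan is to read Lemmas \ref{lem:NM1} and \ref{lem:NM2} as a single induction on the smoothness index, writing $\lesssim$ for ``bounded by a constant depending on the current level''. Lemma \ref{lem:NM1} is the starting level $k=l$ (with $d_l=0$), and Lemma \ref{lem:NM2} is the step that propagates the package of four estimates $\|f^d\|_{2k-1,r_d},\|\zeta(f^d)\|_{2k-1,r_d}\lesssim t_d^A$ and $\|f^d\|_{k,r_d}\lesssim 1$, $\|\zeta(f^d)\|_{k,r_d}\lesssim t_d^{-1}$ (for $d\geq d_k$) from level $k$ to level $k+1$, together with the increment bound $(i)$. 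Once both lemmas are in hand, for a fixed $p\geq l$ one iterates the step from $l$ up to $p$; then $(i)$ gives $\|{\hat\chi}^d\|_{p+s,\rho_d}\lesssim t_d^{-1/2}$ with summable partial sums, so by the scaled composition law \eqref{axiom:product} the elements $\psi_d={\hat\phi}_{d-1}\circ\cdots\circ{\hat\phi}_0$ form a Cauchy sequence in $\|\,\|_{p,R/2}$ while every radius stays $\geq R/2$; its limit $\psi_\infty$ lies in $\mathcal{G}^0_{p,R/2}$ since $\mathcal{G}^0$ is closed, and because $\|\zeta(f^d)\|_{p,r_d}\lesssim t_d^{-1}\to 0$ and $f^d=\psi_d\cdot f\to\psi_\infty\cdot f$ by continuity of the action, $\psi_\infty\cdot f\in\mathcal{N}_{p,R/2}$, which is the conclusion of Theorem \ref{thm:Nash-Moser}. (In the applications one uses the affine variant, obtained by inserting $-\mathsf{f_O}$ in the relevant norms as in \eqref{eqn:projAffine}, \eqref{eqn:estimate-HAffine}, \eqref{eqn:estimate-zetaAffine}; the argument is verbatim.)

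Within one step I would derive the five estimates in the order $(i)$, then $(iii)$ and $(iv)$, then $(v)$ and $(ii)$. For $(i)$: from ${\hat\chi}^d=\Phi\big(S(t_d)\mathrm{H}(f^d)\big)-Id$, estimate \eqref{eqn:estimate-Exp} reduces $\|{\hat\chi}^d\|_{k+1+s,\rho_d}$ to norms of $S(t_d)\mathrm{H}(f^d)$, which I would split with the smoothing axioms \eqref{axiom:smoothing1}--\eqref{axiom:smoothing2}: by \eqref{eqn:estimate-H} and the level-$k$ hypotheses the cochain $\mathrm{H}(f^d)$ has \emph{low} norm $\lesssim t_d^{-1}$ (inherited from $\|\zeta(f^d)\|_{k,r_d}\lesssim t_d^{-1}$) and \emph{high} norm $\lesssim t_d^{A'}$ (inherited from the $t_d^A$-bounds at level $2k-1$), and the choice of $l$ forced by \eqref{cond:lsespsilon} and \eqref{cond:lsespsilonBis} makes the smoothed quantity $\lesssim t_d^{-1/2}$ for $d$ large. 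In particular $\|{\hat\chi}^d\|_{1,\rho_d}\lesssim t_d^{-1/2}$, and since $t_d=t_0^{(3/2)^d}$ makes $t_d^{-1/2}$ decay doubly exponentially, this is eventually below the radius budget $\tfrac12(d+2)^{-2}$ built into $\rho_d$ and $r_{d+1}$; this is exactly what keeps the scaled composition, inversion and action laws applicable at every index with no radius falling below $R/2$.

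For $(iii)$ and $(iv)$: writing $f^{d+1}={\hat\phi}_d\cdot f^d$ and applying the action estimate \eqref{axiom:action2} at smoothness $2(k+1)-1=2k+1$ gives a recursion $\|f^{d+1}\|_{2k+1,r_{d+1}}\leq\|f^d\|_{2k+1,r_d}(1+\eta_d)+\|{\hat\chi}^d\|_{2k+1+s,\rho_d}\|f^d\|_{k+1,r_d}\,Poly(\cdots)$, where $\eta_d\lesssim t_d^{-1/2}$ is summable and the forcing term, after bounding $\|{\hat\chi}^d\|_{2k+1+s,\rho_d}$ by \eqref{eqn:estimate-Exp}, the smoothing axioms and \eqref{eqn:estimate-H}, is $\lesssim t_d^{A''}$ with $A''$ pinned by \eqref{cond:lsespsilonBis} so that $A''\leq A$; because $t_d$ grows doubly exponentially, $\sum_{j\leq d}t_j^{A''}$ is comparable to its last term $t_d^{A''}\leq t_d^A$, so the bound $C_{k+1}t_d^A$ reproduces. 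The same computation applied to $\zeta$, using \eqref{eqn:proj} to pass between $f$ and $\zeta(f)$, gives $(iv)$. For $(v)$: apply the quadratic-error hypothesis \eqref{eqn:estimate-zeta} at smoothness $k+1$, $\|\zeta(f^{d+1})\|_{k+1,r_{d+1}}\leq\|\zeta(f^d)\|_{k+1+s,r_d}^{1+\delta}Q(\cdots)$; interpolate $\|\zeta(f^d)\|_{k+1+s,r_d}$ between the small level-$k$ value and the $t_d^A$-value now available at level $2k+1$, raise to the power $1+\delta$, and use \eqref{cond:Aepsilon}, \eqref{cond:AepsilonBis} together with the fact that the degree of $Q$ in its first variable is bounded independently of $k$: the resulting exponent of $t_d$ is $<-1$, so after enlarging $d_{k+1}$ one gets $\|\zeta(f^{d+1})\|_{k+1,r_{d+1}}<C_{k+1}t_d^{-1}$. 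Finally $(ii)$ follows from \eqref{axiom:action3} at smoothness $k+1$, $\|f^{d+1}\|_{k+1,r_{d+1}}\leq\|f^d\|_{k+1,r_d}(1+\eta_d')$ with $\eta_d'\lesssim t_d^{-1/2}$ summable: the telescoping product converges, giving $\|f^d\|_{k+1,r_d}<C_{k+1}\frac{d+1}{d+2}$.

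The main obstacle --- and the heart of the two lemmas --- is this exponent accounting, that is, genuinely controlling the loss of differentiability: one must check that in passing from level $k$ to level $k+1$ (and in the parallel jump from $2k-1$ to $2k+1$) the smoothing operator $S(t_d)$ absorbs \emph{exactly} the fixed shifts $+s$ in \eqref{eqn:estimate-H}, \eqref{eqn:estimate-Exp}, \eqref{eqn:estimate-zeta} and $+\gamma$ in the SCI-action axioms, so that the growth exponent of the high norms remains pinned at the \emph{same} constant $A$ rather than creeping up under iteration. This is precisely what the otherwise opaque inequalities \eqref{cond:Aepsilon}--\eqref{cond:lsespsilonBis} and the choice $t_{d+1}=t_d^{3/2}$ (which collapses ``geometric'' sums $\sum_j t_j^{c}$ onto their last term) are rigged to guarantee; once they are fixed, everything else is a long but routine chain of interpolation estimates of the kind carried out by Conn \cite{conn} and in \cite{monnierzung2004}, and Lemma \ref{lem:NM1} is proved by the very same scheme, differing only in that it additionally installs the initial smallness through the hypotheses on $\|f^0\|_{2l-1,r_0}$ and $\|\zeta(f^0)\|_{l,r_0}$.
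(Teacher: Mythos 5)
Your overall strategy is the paper's own: the same five-point induction on the smoothness index, driven by the smoothing operators, the interpolation inequality (\ref{eqn:interpolNM}), the homotopy-operator estimate (\ref{eqn:estimate-H}), the SCI-action axioms, and the exponent bookkeeping encoded in (\ref{cond:Aepsilon})--(\ref{cond:lsespsilonBis}) and $t_{d+1}=t_d^{3/2}$. However, there is one genuine gap and one ordering problem.

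The gap is in your treatment of (v). You write $\|\zeta(f^{d+1})\|_{k+1,r_{d+1}}\leq\|\zeta(f^d)\|_{k+1+s,r_d}^{1+\delta}Q(\cdots)$ as a direct application of (\ref{eqn:estimate-zeta}). But (\ref{eqn:estimate-zeta}) is a hypothesis only about $\phi_f=\Phi\big(\mathrm{H}(f)\big)$, whereas $f^{d+1}={\hat\phi}_d\cdot f^d$ with ${\hat\phi}_d=\Phi\big(S(t_d)\mathrm{H}(f^d)\big)$, i.e.\ the conjugation built from the \emph{smoothed} cochain. Nothing in the hypotheses of Theorem \ref{thm:Nash-Moser} gives a quadratic bound for $\zeta({\hat\phi}_d\cdot f^d)$ directly. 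The paper splits $\zeta({\hat\phi}_d\cdot f^d)=\zeta(\phi_d\cdot f^d)+\zeta({\hat\phi}_d\cdot f^d-\phi_d\cdot f^d)$, applies (\ref{eqn:estimate-zeta}) to the first term only, and controls the second term by (\ref{eqn:estimate-Exp-bis}) combined with the smoothing-error axiom (\ref{axiom:smoothing2}), which supplies the factor $t_d^{-2}$ making that term $O(t_d^{-3/2})$ as well. You never invoke (\ref{eqn:estimate-Exp-bis}) anywhere in your sketch; since that hypothesis exists precisely to handle this step, its absence is not a cosmetic omission but a missing idea.

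The ordering problem: you propose (i), then (iii)--(iv), then (v) and (ii). But the forcing term in the action axiom (\ref{axiom:action2}) at level $2k+1$ is $\|{\hat\chi}^d\|_{2k+1+s,\rho_d}\|f^d\|_{k+1,r_d}R(\cdots)$, and (\ref{eqn:proj}) at level $2k+1$ involves $Poly(\|f^d\|_{k+1,r_d})$; both require the uniform bound (ii) on $\|f^d\|_{k+1,r_d}$ before (iii) and (iv) can be closed. Interpolation alone only gives $\|f^d\|_{k+1,r_d}\lesssim t_d^{A/(k-1)}$, which is not affordable given that the margin in $A+4s+2<\tfrac32 A$ is only $1/2$. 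The paper therefore proves (ii) first, fixes the provisional constant ${\tilde C}_{k+1}$ there, and only then sets up the recursion for (iii)--(iv), whose constant $V_{k+1}$ depends on ${\tilde C}_{k+1}$. Relatedly, a small slip: the forcing exponent in (iii) is $A+4s+2>A$, not $A''\leq A$ as you claim; what saves the induction is $A+4s+2<\tfrac{3}{2}A$ (i.e.\ $A>8s+4$) together with $t_{d+1}=t_d^{3/2}$, not condition (\ref{cond:lsespsilonBis}), which governs the exponent in the smoothing-difference term of (v) instead.
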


{\it End of the proof of Theorem \ref{thm:Nash-Moser} : } We choose
$t_0$ as in Lemma \ref{lem:NM1}. According to (\ref{eqn:proj}), we
have $\| \zeta(f) \|_{2l-1,R} \leq \||| f ||_{2l-1,R}P(\| f
\|_{l,R})$ where $P$ is a polynomial with positive coefficients and
independent of $f$. Now, we fix two positive constants $\alpha>1$
and $\beta<1$ such that $t_0^A\geq \alpha$, $t_0^A\geq \alpha P(1)$
and $t_0^{-1}\geq \beta$. Now, if $f\in\mathcal{F}_{2p-1,R}$
satisfies $\|f\|_{2l-1,R}\leq\alpha$ and
$\|\zeta(f)\|_{l,R}\leq\beta$ then, using Lemma \ref{lem:NM1} and
applying Lemma \ref{lem:NM2} repetitively, we get that for each
$k\geq l$ there exists a positive integer $d_k$ such that for all
$d\geq d_k$,
\begin{equation}
\|{\hat \chi}^{d}\|_{k+s,\rho_d}< C_k t_d^{-1/2}\,.
\end{equation}

Actually it is more convenient to prove the convergence of the
sequence ${(\psi_d^{-1})}_{d\geq 1}$. The point ii) of the
continuity conditions in the definition of SCI-group will then give
the convergence of ${(\psi_d)}_{d\geq 1}$. Choose, for every $d$, a
radius $\rho_d'$ such that $\rho_{d+1}\leq  \rho_d'\leq
\rho_d(1-c\|\hat{\chi}^{d}\|_{1,\rho_d})$.
For all positive
integer $d$, we have $\psi_d^{-1}={\hat
\phi}_0^{-1}\circ\hdots\circ{\hat \phi}_{d-1}^{-1}$ and we denote
${\hat \phi}_d^{-1}=Id+{\hat \xi}^{d}$.
 The axiom
(\ref{axiom:inverse}) implies (because $s\geq 1$) that for all
$d\geq d_p$,
\begin{equation}
\|{\hat \xi}^{d}\|_{p+1,\rho_d'}< M_p t_d^{-1/2}\,,
\end{equation}
where $M_p$ is a positive constant independent
of $d$.

Consider the two polynomials (with positive coefficients) $R$ and
$T$ of the estimate (\ref{conseq-axiom-product}). We know that the
product $\displaystyle \prod_{d\geq d_p} \big( 1+T(M_p t_d^{-1/2}))$
converges. Then, using repetitively (\ref{conseq-axiom-product}), we
can say that there exist two positive constants $a_p$ and $b_p$
(depending only on $p$) such that
\begin{equation}
\| \psi_d^{-1} - Id \|_{p+1,\rho_d'} \leq a_p(t_{d-1}^{-1/2}+ t_{d-2}^{-1/2} +\hdots + t_{d_p}^{-1/2})
+b_p\| \psi_d^{-1} - Id \|_{p+1,\rho_d'}\,.
\end{equation}
The sequence $\big( \| \psi_d^{-1} - Id \|_{p+1,\rho_d'} \big)_{d\geq 0}$ is then bounded.

Now, if $d\geq d_p$, the estimate (\ref{axiom:product}) gives then
\begin{equation}
\| \psi_{d+1}^{-1} - \psi_{d}^{-1} \|_{p+1,\rho_{d+1}'} \leq M_p t_d^{-1/2} P(M_p t_d^{-1/2})
+\| \psi_d^{-1} - Id \|_{p+1,\rho_d'} M_p t_d^{-1/2} Q(M_p t_d^{-1/2})
\end{equation}
where $P$ and $Q$ are two polynomial with positive coefficients.
We can then write
\begin{equation}
\| \psi_{d+1}^{-1} - \psi_{d}^{-1} \|_{p+1,\rho_{d+1}'} \leq c_p t_d^{-1/2}
\end{equation}
where $c_p$ is a positive constant independent of $d$.

We then obtain the
$\|\,\|_{p,R/2}$-convergence of ${(\psi_d^{-1})}_{d\geq 1}$ in
$\mathcal{G}^0$ that is closed. Theorem \ref{thm:Nash-Moser} is then
proved. \QED

\begin{rem}
Note that in the case of a {\it right} SCI-action, it is easier
because we can prove directly the convergence of the sequence
${(\psi_d)}_{d\geq 1}$ without working with ${(\psi_d^{-1})}_{d\geq
1}$.
\end{rem}


{\bf Proof of Lemma \ref{lem:NM1} : } We prove this lemma by
induction. Note that in this proof, the letter $M$ denotes a positive constant
and $P$ denotes a polynomial with positive real coefficients, which do not depend on $d$
and which vary from inequality to inequality.

{\it At the step $d=0$}   
the only thing we have to verify is the point $(1_0)$ (for the point
$(2_0)$ we just choose the constant $C$ such that
$C>2\|f^0\|_{l,r_0}$).

We have, by definition, ${\hat \chi}^0 = \Phi\big(
S(t_0)\mathrm{H}(f^0) \big) - Id$. We will see later that we can
assume that $\| S(t_0)\mathrm{H}(f^0) \|_{2,r_0} < 1 - \rho_0 / r_0$
which allows us to use the estimate (\ref{eqn:estimate-Exp}).
Moreover, using the property of the smoothing operator (\ref{axiom:smoothing1}) with
$p=q=l+2s$ or $\big[\frac{l+s+1}{2}\big]+s$ we get
\begin{equation}
\|{\hat \chi}^0\|_{l+s,\rho_0} \leq
\| \mathrm{H}(f^0) \|_{l+2s,r_0}
P(\| \mathrm{H}(f^0) \|_{[(l+2s+1)/2]+s,r_0})\,.
\end{equation}
Then, the estimate (\ref{eqn:estimate-H})
with the relation $l>6s+1$, and the property (\ref{eqn:proj}) give
\begin{equation}
\|{\hat \chi}^0\|_{l+s,\rho_0} \leq \| \mathrm{H}(f^0) \|_{l+2s,r_0}
P(\| f^0 \|_{l,r_0}) \leq M \| \mathrm{H}(f^0) \|_{l+2s,r_0}\,,
\end{equation}
Now we just have to estimate $\|\mathrm{H}(f)^0 \|_{l+2s,r_0}$.

We use again the estimate (\ref{eqn:estimate-H}) and the
interpolation inequality (\ref{eqn:interpolNM}) to obtain
\begin{eqnarray}
\| \mathrm{H}(f^0) \|_{l+2s,r_0} &\leq & \|\zeta(f^0)\|_{l+3s,r_0}
P(\|f^0\|_{l,r_0}) + \|f^0\|_{l+3s,r_0}\|\zeta(f^0)\|_{l,r_0}P(\|f^0\|_{l,r_0}) \nonumber \\
  & \leq & M \big( \|\zeta(f^0)\|_{l,r_0}^{\frac{l-3s-1}{l-1}}
\|\zeta(f^0)\|_{2l-1,r_0}^{\frac{3s}{l-1}} +
\|f^0\|_{l,r_0}^{\frac{l-3s-1}{l-1}}\|f^0\|_{2l-1,r_0}^{\frac{3s}{l-1}}
\|\zeta(f^0)\|_{l,r_0} \big) \nonumber\\
  & \leq &  M (t_0^{-\frac{l-3s-1}{l-1}+A\frac{3s}{l-1}} + t_0^{-1+A\frac{3s}{l-1}})
\ .
\end{eqnarray}

Finally, by (\ref{cond:lsespsilon}) and (\ref{cond:Aepsilon}) we get
the estimate
$\|{\hat \chi}^0\|_{l+s,\rho_0} \leq M t_0^{-\mu}$ with
$-\mu<-4/5<-1/2$ and, replacing $t_0$ by a larger number if
necessary (independently of $f$ and $d$), we have $\|{\hat
\chi}^0\|_{l+s,\rho_0}<t_0^{-1/2}$.
In the same way, we can show that $\| \chi^0 \|_{l+s,\rho_0}<t_0^{-1/2}$
and $\| \mathrm{H}(f^0) \|_{l+2s,r_0} \leq  t_0^{-4/5}$
(note that, as we said before, it implies that we can assume that
$\| S(t_0)\mathrm{H}(f^0) \|_{2,r_0} < 1 - \rho_0 / r_0$) .\\

Now, we suppose that the conditions $(1_d)\hdots (5_d)$ are
satisfied for $d\geq 0$ and we study the step $d+1$.

{\it Proof of $(1_{d+1})$ :} The point $(1_{d+1})$ can be proved as
above. Moreover, in the same way as in $(1_0)$, we can prove that
$\| \mathrm{H}(f^d) \|_{l+2s+2,r_d} \leq t_d^{-4/5}$ and
$\|\chi^{d}\|_{l+s,\rho_d}<t_d^{-1/2}$.\\

{\it Proof of $(2_{d+1})$ :} According to (\ref{axiom:action3}) with
$\rho_d\leq r_d$, we can write $\|f^{d+1}\|_{l,r_{d+1}}\leq
\|f^d\|_{l,r_d}\big(1 + \|{\hat \chi}^{d}\|_{l+s,\rho_d} P(\|{\hat
\chi}^{d}\|_{l+s,\rho_d})\big)$. Since
$\|{\hat \chi}^{d}\|_{l+s,\rho_d}<t_d^{-1/2}$ we can assume,
choosing $t_0$ large enough, that
\begin{equation}
\|{\hat \chi}^{d}\|_{l+s,\rho_d} P(\|{\hat \chi}^{d}\|_{l+s,\rho_d})\leq
\frac{1}{(d+1)(d+3)}\,,
\end{equation}
and we get
\begin{equation}
\|f^{d+1}\|_{l,r_{d+1}}<C\frac{d+1}{d+2}(1+\frac{1}{(d+1)(d+3)})<
C\frac{d+2}{d+3}\,.
\end{equation}

{\it Proof of $(3_{d+1})$ :} We have $f^{d+1}={\hat
{\phi}}_{d}\cdot f^d$ with ${\hat {\phi}}_{d}=Id+{\hat
\chi}^{d}=\Phi\big(S(t_d)\mathrm{H}(f^d)\big)$ thus,
(\ref{axiom:action2}) (with $\rho_d\leq r_d$) gives
\begin{equation}
\|f^{d+1}\|_{2l-1,r_{d+1}} \leq \|f^d\|_{2l-1,r_d} P(\|{\hat
{\chi}}^{d}\|_{l+s,\rho_d})+\|{\hat
{\chi}}^{d}\|_{2l-1+s,\rho_d} \|f^d\|_{l,r_d} P(\|{\hat
\chi}^{d}\|_{l+s,\rho_d})\,.
\end{equation}
This gives, by $(1_d)$
and $(2_d)$,
\begin{equation}
\|f^{d+1}\|_{2l-1,r_{d+1}} \leq M ( \|f^d\|_{2l-1,r_d} + \|{\hat
{\chi}}^{d}\|_{2l-1+s,\rho_d} )\,.
\end{equation}
Now, using (\ref{eqn:estimate-Exp}) with
$\big[\frac{2l-1+s+1}{2}\big]+s\leq l+2s$ we have
\begin{eqnarray}
\|{\hat {\chi}}^{d}\|_{2l-1+s,\rho_d} & \leq & \| S(t_d)
\mathrm{H}(f^d) \|_{2l-1+2s,r_d}
P(\| S(t_d) \mathrm{H}(f^d) \|_{l+2s,r_d}) \nonumber \\
 & \leq & \| S(t_d) \mathrm{H}(f^d) \|_{2l-1+2s,r_d}
P(\| \mathrm{H}(f^d) \|_{l+2s,r_d}) \quad {\mbox { by }}
(\ref{axiom:smoothing1})\,.
\end{eqnarray}
As we said above, we have the estimate $\| \mathrm{H}(f^d)
\|_{l+2s,r_d} \leq t_d^{-4/5}$ then, we can write
\begin{eqnarray}
\| {\hat {\chi}}^{d} \|_{2l-1+s,\rho_d} & \leq & M \| S(t_d) \mathrm{H}(f^d) \|_{2l-1+2s,r_d} \nonumber \\
 & \leq & M t_d^{4s} \| \mathrm{H}(f^d) \|_{2l-1-2s,r_d}\quad {\mbox { by }}\,
(\ref{axiom:smoothing1}) \nonumber \\
   & \leq & M t_d^{4s} \big( \|\zeta(f^d)\|_{2l-1-s,r_d} P(\|f^d\|_{l,r_d}) \nonumber \\
   & & +\|f^d\|_{2l-1-s,r_d}\|\zeta(f^d)\|_{l,r_d} P(\|f^d\|_{l,r_d}) \big)
   \quad {\mbox { by }}\,(\ref{eqn:estimate-H})\,.
\end{eqnarray}
We get $\|{\hat
{\chi}}^{d}\|_{2l-1+s,\rho_d} \leq M t_d^{A+4s}$ and,
consequently,
\begin{equation}
\|f^{d+1}\|_{2l-1,r_{d+1}} \leq M t_d^{A+4s}\,.
\end{equation}
To finish, since $A=8s+4$, we have that $\|f^{d+1}\|_{2l-1,r_{d+1}}
\leq M t_d^{B}$ with $0<B<3A/2$ thus, replacing $t_0$ by a larger
number if necessary, we get
$\|f^{d+1}\|_{2l-1,r_{d+1}} < t_d^{3A/2}=t_{d+1}^A$.\\

{\it Proof of $(4_{d+1})$ :} We have (see (\ref{eqn:proj}))
\begin{equation}
\|\zeta(f^{d+1})\|_{2l-1,r_{d+1}}\leq \|f^{d+1}\|_{2l-1,r_{d+1}}
P(\|f^{d+1}\|_{l,r_{d+1}})\,.
\end{equation}
Using the
estimates of $\|f^{d+1}\|_{2l-1,r_{d+1}}$ and
$\|f^{d+1}\|_{l,r_{d+1}}$ given above, we obtain
$\|\zeta(f^{d+1})\|_{2l-1,r_{d+1}}\leq M t_d^{A+4s}$, and we
conclude as in $(3_{d+1})$.

{\it Proof of $(5_{d+1})$ :} Recall that we have
$f^{d+1}={\hat\phi}_{d}\cdot f^d$ with
${\hat\phi}_{d}=\Phi\big(S(t_d)\mathrm{H}(f^
d)\big)=Id+{\hat\chi}^{d}$ and $\phi_{d}=\Phi\big(\mathrm{H}(f^
d)\big)=Id+\chi^{d}$.

We can write
\begin{equation}
\zeta({\hat\phi}_{d}\cdot f^d) = \zeta(\phi_{d}\cdot f^{d})
+\zeta({\hat\phi}_{d}\cdot f^d - \phi_{d}\cdot f^d)
\end{equation}
On the one hand, by (\ref{eqn:estimate-zeta}) with $\rho_d\leq r_d$
we get
\begin{equation}
\|\zeta(\phi_{d}\cdot f^{d})\|_{l,r_{d+1}} \leq
\|\zeta(f^d)\|_{l+s,r_d}^{1+\delta}
Q(\|f^d\|_{l+s,r_d},\|\chi^{d}\|_{l+s,\rho_d},\|\zeta(f^d)\|_{l+s,r_d},\|f^d\|_{l,r_d})
\label{eqn:point(5)4}
\end{equation}
where $Q$ is a polynomial whose degree $\tau$ in the first variable
does not depend on $l$ and $f$. Now, using the interpolation
inequality (\ref{eqn:interpolNM}) with $r=l$ and $p=2l-1$ we get
\begin{eqnarray}
\|\zeta(f^d)\|_{l+s,r_d} & \leq & M t_d^{-\frac{l-s-1}{l-1}+A\frac{s}{l-1}}  \label{eqn:point(5)3}\\
{\mbox { and }} \quad \|f^d\|_{l+s,r_d} & \leq & M
t_d^{A\frac{s}{l-1}}\,. \label{eqn:point(5)2}
\end{eqnarray}

The inequality (\ref{eqn:point(5)3}) with (\ref{cond:Aepsilon}) imply that
$\|\zeta(f^d)\|_{l+s,r_d} \leq  M t_d^{-\frac{4}{5}}$.

Then, using points $(1_d)$...$(5_d)$ and the estimate
$\|\chi^{d}\|_{l+s,\rho_d}<t_d^{-1/2}$ (see the proof of $(1_0)$),
the inequality (\ref{eqn:point(5)4}) gives
\begin{equation}
\|\zeta(\phi_{d}\cdot f^{d})\|_{l,r_{d+1}} \leq M
t_d^{-(1+\delta)\frac{l-s-1}{l-1} + (1+\delta+\tau)A\frac{s}{l-1}} \,.
\end{equation}

Finally, using the technical conditions (\ref{cond:lsespsilon}),
(\ref{cond:Aepsilon}) and (\ref{cond:AepsilonBis}), we have
\begin{equation}
\|\zeta(\phi_{d}\cdot f^{d})\|_{l,r_{d+1}}\leq Mt_d^{-\mu}
\end{equation}
where $-\mu<-3/2$ and, replacing $t_0$ by a larger number if
necessary, we obtain $\|\zeta(\phi_{d}\cdot f^{d})\|_{l,r_{d+1}}
<\frac{1}{2}t_d^{-3/2}$.

On the other hand, using the estimate $\| S(t_d)H(f^d) \|_{l+s,r_d}
\leq M \| H(f^d) \|_{l+s,r_d}$ (see (\ref{axiom:smoothing1})) with
the inequality (\ref{eqn:estimate-Exp-bis}) we get
\begin{eqnarray}
\| {\hat\phi}_{d}\cdot f^d - \phi_{d}\cdot f^d \|_{l,r_{d+1}} &
\leq &
\| S(t_d) H(f^d) - H(f^d) \|_{l+s,r_d} \| f^d \|_{l+s,r_d} P( \| H(f^d) \|_{l+s,r_d}) \nonumber \\
 & & \quad + \, \| f^d \|_{l+s,r_d} R_{(2)}( \| H(f^d) \|_{l+s,r_d})\,, \label{eqn:point(5)1}
\end{eqnarray}
where $R_{(2)}$ is a polynomial with positive coefficients and
which contains only terms of degree greater or equal to  2. We said
in the proof of the point $(1_{d+1})$ that $\|H(f^d)
\|_{l+s+2,r_d}<t_d^{-4/5}$ and we saw above (see
(\ref{eqn:point(5)2})) that $\|f^d\|_{l+s,r_d} \leq  M
t_d^{A\frac{s}{l-1}}$.  Then, we get :
\begin{equation}
\| f^d \|_{l+s,r_d} R_{(2)}( \| H(f^d) \|_{l+s,r_d}) \leq
Mt_d^{-2\times\frac{4}{5} + A\frac{s}{l-1}}\,.
\end{equation}
Moreover, the property of the smoothing operator
(\ref{axiom:smoothing2}) gives :
\begin{equation}
\| S(t_d) H(f^d) - H(f^d) \|_{l+s,r_d} \leq M t_d^{-2} \|H(f^d)
\|_{l+s+2,r_d} \leq M t_d^{-2-\frac{4}{5}}\,,
\end{equation}
which induces
\begin{equation}
\| S(t_d) H(f^d) - H(f^d) \|_{l+s,r_d} \| f^d \|_{l+s,r_d} P( \|
H(f^d) \|_{l+s,r_d}) \leq M t_d^{-2-\frac{4}{5}+A\frac{s}{l-1}}\,.
\end{equation}

Finally, the estimate (\ref{eqn:point(5)1}) gives
\begin{equation}
\| {\hat\phi}_{d}\cdot f^d - \phi_{d}\cdot f^d \|_{l,r_{d+1}}
\leq M (t_d^{-2-\frac{4}{5} + A\frac{s}{l-1}} +
t_d^{-2\times\frac{4}{5} + A\frac{s}{l-1}})\,.
\end{equation}
With the condition (\ref{cond:lsespsilonBis}) we then obtain :
\begin{equation}
\| {\hat\phi}_{d}\cdot f^d - \phi_{d}\cdot f^d \|_{l,r_{d+1}}
\leq Mt_d^{-\nu}
\end{equation}
where $-\nu<-3/2$ and, replacing $t_0$ by a larger number if
necessary, we obtain $\|\zeta(\phi_{d}\cdot f^{d})\|_{l,r_{d+1}}
<\frac{1}{2}t_d^{-3/2}$.

As a conclusion, we can write
\begin{equation}
\|\zeta(f^{d+1})\|_{l,r_{d+1}} \leq \frac{1}{2}
t_d^{-3/2}+\frac{1}{2} t_d^{-3/2}=t_{d+1}^{-1}
\end{equation}

Lemma \ref{lem:NM1} is proved.\QED

{\bf Proof of Lemma \ref{lem:NM2} : } As in the proof of the
previous lemma, the letter $M_k$ is a positive constant which does
not depend on $d$ and which varies from inequality to inequality. In the same way, $P_k$
is a polynomials with positive real coefficients, which depends only on $k$
and which varies from inequality to inequality.\\

{\it Proof of (i) :} We follow the same method as in the proof of
the point $(1_0)$ of the previous lemma. Using the relation $k\geq l
\geq 6s+1$ and the interpolation inequality (\ref{eqn:interpolNM})
with $r=k$ and $p=2k-1$, we can show that for all $d\geq d_k$, we
have
\begin{equation}
\|{\hat \chi}^{d}\|_{k+1+s,\rho_d} \leq M_k
(t_d^{-\frac{k-3s-2}{k-1}+A\frac{3s+1}{k-1}} +
t_d^{-1+A\frac{3s+1}{k-1}})
   \leq M_k t_d^{-\mu}\,,
\end{equation}
where $-\mu<-4/5$ (using (\ref{cond:Aepsilon})). Thus, there exists
$d_{k+1} > d_k$ such that for all $d\geq d_{k+1}$ we have $\|{\hat
\chi}^{d}\|_{k+1+s,\rho_d}<t_d^{-1/2}$. Note that in the same way,
we can also prove that
$\|\chi^{d}\|_{k+1+s,\rho_d}<t_d^{-1/2}$ and $\| \mathrm{H}(f^d) \|_{k+2s+3,r_d} <t_d^{-4/5}$.\\

{\it Proof of (ii) :} For $d\geq d_{k+1}$, we have by
(\ref{axiom:action3})
\begin{equation}
\|f^{d+1}\|_{k+1,r_{d+1}}\leq \|f^d\|_{k+1,r_d}
\big( 1 + \|{\hat \chi}^{d}\|_{k+1+s,\rho_d}P(\|{\hat \chi}^{d}\|_{k+1+s,\rho_d}) \big)\,.
\end{equation}
In Point (i)
we saw that $\|{\hat \chi}^{d}\|_{k+1+s,\rho_d}<t_d^{-1/2}$ then,
we can assume, replacing $d_{k+1}$ by a larger integer if necessary,
that
\begin{equation}
\|{\hat \chi}^{d}\|_{k+1+s,\rho_d}P(\|{\hat \chi}^{d}\|_{k+1+s,\rho_d})\leq
\frac{1}{(d+1)(d+3)}\,,
\end{equation}
for all $\geq d_{k+1}$. Now we choose a
positive constant ${\tilde C}_{k+1}$ (independent of $d$) such that
$\|f^{d_{k+1}}\|_{k+1,r_{d_{k+1}}}<{\tilde
C}_{k+1}\frac{d_{k+1}+1}{d_{k+1}+2}$. We then obtain, as in the
proof of Point $(2)$ of the previous lemma, that
$\|f^{d}\|_{k+1,r_{d+1}}< {\tilde C}_{k+1}\frac{d+1}{d+2}$ for any
$d\geq d_{k+1}$. Note that ${\tilde C}_{k+1}$ is, a priori, not the
constant of statement of the lemma. Later in the proof (see the
proof of the point (iii)), we will replace it by a
larger one.\\

{\it Proof of (v) :} The proof follows the same idea as the proof of
Point $(5)$ in the previous lemma, that is why we don't give a lot
of details. Consider an integer $d\geq d_{k+1}$. We write obviously
\begin{equation}
\zeta(f^{d+1}) = \zeta({\hat\phi}_{d}\cdot f^{d})  =
\zeta(\phi_{d}\cdot f^{d}) +\zeta({\hat\phi}_{d}\cdot f^{d} -
\phi_{d}\cdot f^{d})\,.
\end{equation}
On the one hand, by (\ref{eqn:estimate-zeta}), the interpolation
inequality (\ref{eqn:interpolNM}) with $r=k$ and $p=2k-1$, the condition (\ref{cond:Aepsilon}),
and the estimate $\|\chi^{d}\|_{k+1+s,\rho_d}<t_d^{-1/2}$ (see the proof
of $(i)$), we obtain
\begin{equation}
\|\zeta(\phi_{d}\cdot f^{d})\|_{k+1,r_{d+1}} \leq M_k
t_d^{-(1+\delta)\frac{k-s-2}{k-1} + (1+\delta+\tau)A\frac{s+1}{k-1}}
\end{equation}
(recall that $\tau$ and $\delta$ are introduced in Theorem
\ref{thm:Nash-Moser}). Then, by (\ref{cond:lsespsilon}) and
(\ref{cond:Aepsilon}), we have $\|\zeta(\phi_{d}\cdot
f^{d})\|_{k+1,r_{d+1}}\leq M_k t_d^{-\mu}$ where $-\mu<-3/2$ and,
replacing $d_{k+1}$ by a larger integer if necessary, we obtain
$\|\zeta(\phi_{d}\cdot
f^{d})\|_{k+1,r_{d+1}} < \frac{1}{2}t_d^{-3/2}$.\\

On the other hand, following the same way as in the proof of point
$(5)$ in the previous lemma (with the estimate $\| \mathrm{H}(f^d)
\|_{k+2s+3,r_d} <t_d^{-4/5}$ given in (i)), we can prove that
\begin{equation}
\| \zeta({\hat\phi}_{d}\cdot f^d - \phi_{d}\cdot f^d)
\|_{l,r_{d+1}} \leq M_k t_d^{-\nu}\,,
\end{equation}
with $-\nu<-3/2$, and replacing $d_{k+1}$ by a larger integer if necessary, we can
write
\begin{equation}
\| \zeta({\hat\phi}_{d}\cdot f^d - \phi_{d}\cdot f^d) \|_{l,r_{d+1}}
\leq
 \frac{1}{2} t_d^{-3/2}\,.
\end{equation}
Finally, we obtain for all $d\geq d_{k+1}$,
\begin{equation}
\|\zeta(f^{d+1})\|_{k+1,r_{d+1}}<t_{d+1}^{-1}\,,
\end{equation}
which gives the result (replacing actually $d_{k+1}$ by
$d_{k+1}+1$).

{\it Proof of (iii) and (iv) :} We first write, using the inequality
(\ref{eqn:proj}), for all $d\geq d_{k+1}$,
\begin{equation}
\|\zeta(f^d)\|_{2k+1,r_d} \leq \|f^d\|_{2k+1,r_d}
P_k(\|f^d\|_{k+1,r_d})\,.
\end{equation}
Putting $V_{k+1}:=\max (1,T_{2k+1}({\tilde
C}_{k+1}))$ (recall that ${\tilde C}_{k+1}$ was introduced in
$(ii)$), we obtain by the point (ii),
\begin{equation}
\|\zeta(f^d)\|_{2k+1,r_d} \leq V_{k+1}\|f^d\|_{2k+1,r_d}\,.
\label{eqn:zetaf2k+1}
\end{equation}
We will use this inequality at the end of the proof.\\

In the same way as in the proof of $(3_d)$ of the previous lemma, we
can show that for all $d\geq d_{k+1}$ we have
\begin{equation}
\|f^{d+1}\|_{2k+1,r_{d+1}} \leq M_k (\|f^d\|_{2k+1,r_d}+\|{\hat
\chi}^{d}\|_{2k+1+s,\rho_d})\,,
\end{equation}
with
\begin{eqnarray}
\| {\hat {\chi}}^{d} \|_{2k+1+s,\rho_d} & \leq & M_k \| S(t_d) \mathrm{H}(f^d) \|_{2l+1+2s,r_d} \nonumber \\
 & \leq & M_k t_d^{4s+2} \| \mathrm{H}(f^d) \|_{2k-1-2s,r_d}\quad {\mbox { by }}\,
(\ref{axiom:smoothing1})  \nonumber \\
   & \leq & M_k t_d^{4s+2} \big( \|\zeta(f^d)\|_{2k-1-s,r_d} P_k(\|f^d\|_{k,r_d}) \nonumber \\
   & & +\|f^d\|_{2k-1-s,r_d}\|\zeta(f^d)\|_{k,r_d} P_k(\|f^d\|_{k,r_d}) \big)
   \quad {\mbox { by }}\,(\ref{eqn:estimate-H})\,.
\end{eqnarray}
We then get the estimate $\|{\hat{\chi}}^{d}\|_{2k+1+s,\rho_d} \leq
M_k t_d^{A+4s+2}$, which gives
\begin{equation}
\|f^{d+1}\|_{2k+1,r_{d+1}} \leq M_k (\|f^d\|_{2k+1,r_d}+
t_d^{A+4s+2})\,.
\end{equation}
Now, since $A>8s+4$, replacing $d_{k+1}$ by a larger integer if
necessary,  we can assume that for any $d\geq d_{k+1}$, we have $M_k
t_d^{A+4s+2}<\frac{1}{2V_{k+1}} t_d^{3A/2}$ (note that it also
implies $M_k< \frac{1}{2V_{k+1}} t_d^{A/2}$). This gives
\begin{equation}
\|f^{d+1}\|_{2k+1,r_{d+1}} \leq \frac{1}{2V_{k+1}}
t_d^{A/2}\|f^d\|_{2k+1,r_d} +\frac{1}{2V_{k+1}} t_d^{3A/2}\,.
\label{eqn:recf}
\end{equation}

We choose a positive constant $C_{k+1}$ such that
\begin{equation}
C_{k+1}>\max \Big(1,{\tilde C}_{k+1},
\frac{\|f^{d_{k+1}}\|_{2k+1,r_{d_{k+1}}}}{t_{d_{k+1}}^A}\Big)\,.
\end{equation}
We then have
$\|f^{d_{k+1}}\|_{2k+1,r_{d_{k+1}}}<C_{k+1}{t_{d_{k+1}}^A}$ and,
using (\ref{eqn:recf}) we obtain by induction :
\begin{equation}
\|f^d\|_{2k+1,r_d} < \frac{C_{k+1}}{V_{k+1}} t_d^A < C_{k+1}t_d^A\,,
\end{equation}
for all $d\geq d_{k+1}$.

Now, by (\ref{eqn:zetaf2k+1}), we have
$$
\|\zeta(f^d)\|_{2k+1,r_d} \leq V_{k+1}\frac{C_{k+1}}{V_{k+1}}
t_d^A\,,
$$
for all $d\geq d_{k+1}$.

Moreover, the definition of $C_{k+1}$ completes the proof of the
point
(i), (ii) and (v).\\

Lemma \ref{lem:NM2} is proved. \QED

\begin{rem}
What about the proof of the affine version of Theorem \ref{thm:Nash-Moser} (see Section \ref{Affineversion}) ?
In fact, we can prove this result exactly in the same way. We just have to replace in Lemmas \ref{lem:NM1} and
\ref{lem:NM2}, the terms $f^d$ and $\zeta(f^d)$ by $f^d-\mathsf{f_O}$ and $\zeta(f^d)-\mathsf{f_O}$.\\

We can explain in this remark why we did not add a term $-\mathsf{f_O}$ to the estimate (\ref{eqn:estimate-Exp-bis}) in the affine version of Theorem \ref{thm:Nash-Moser} as we did for (\ref{eqn:proj}), (\ref{eqn:estimate-H})
and (\ref{eqn:estimate-zeta}). If we look at the proof of Lemmas \ref{lem:NM1} and
\ref{lem:NM2}, the only place where we used the estimate (\ref{eqn:estimate-Exp-bis}) was in the proof of the points $(5_{d+1})$ and $(v)$, writing :
$$
\zeta({\hat\phi}_{d}\cdot f^d)  = \zeta(\phi_{d}\cdot f^{d})
+\zeta({\hat\phi}_{d}\cdot f^d - \phi_{d}\cdot f^d) \,.
$$
For the affine version, we have to write
$$
\zeta({\hat\phi}_{d}\cdot f^d) -\mathsf{f_O}  =   \big( \zeta(\phi_{d}\cdot f^{d}) -\mathsf{f_O} \big) + \zeta({\hat\phi}_{d}\cdot f^d - \phi_{d}\cdot f^d)
$$
We can work with the term $\zeta(\phi_{d}\cdot f^{d}) -\mathsf{f_O}$ in the same way as in
Lemmas \ref{lem:NM1} and \ref{lem:NM2}. For the term $\zeta({\hat\phi}_{d}\cdot f^d - \phi_{d}\cdot f^d)$
(without  $-\mathsf{f_O}$) we use the estimate (\ref{eqn:estimate-Exp-bis}).
\label{rem:proofAffine}
\end{rem}

\section{Appendix 2 : Some technical results}
This section is devoted to state and prove some technical results we
used in the proof of Theorem \ref{thm:maintheoremlocal}.

\subsection{The local diffeomorphisms}
As we said in section \ref{Theproof}, the proof of Theorem
\ref{thm:maintheoremlocal} consists in checking that our situation
is a particular case of the \lq\lq SCI-context" given in the
Appendix 1. We then need some estimates on local diffeomorphisms and
action of local diffeomorphisms on smooth functions. Most of the
properties of the definition of SCI-spaces, SCI-groups and
SCI-actions can be found in \cite{conn} and \cite{monnierzung2004}.
In each of the followings lemmas, if $\rho$ is a positive real
number, $B_\rho$ is the closed ball in $\bbR^n$ of radius $\rho$ and
center 0.

We first recall here the following useful Lemma.
\begin{lem}
\label{lem:composition} Let $r > 0$ and $0< \eta<1$ be two positive
numbers. Consider two smooth maps
$$f:B_{r(1+\eta)}\rightarrow\bbR^q\quad {\mbox { and }}\quad \chi:B_r\rightarrow \bbR^n$$
such that $\chi(0)=0$ and $\|\chi\|_{1,r}<\eta$. Then the
composition $f\circ (id+\chi)$ is a smooth map from $B_r$ to
$\bbR^n$ which satisfies the following inequality:
\begin{equation}
\|f\circ(id+\chi)\|_{k,r} \leq \|f\|_{k,r(1+\eta)}(1+P_k(\|\chi\|_{k,r}))  \label{eqn:compl}\\
\end{equation}
where $P_k$ is a polynomial of degree $k$ with vanishing constant
term (and which is independent of $f$ and $\chi$).
\end{lem}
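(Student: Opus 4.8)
The plan is to verify first that the composition is well defined on $B_r$, and then to obtain the estimate by a careful application of the multivariate chain rule (Fa\`a di Bruno's formula), isolating one leading term that carries the factor $\|f\|_{k,r(1+\eta)}$ and bounding all the remaining terms by $\|f\|_{k,r(1+\eta)}$ times a polynomial in $\|\chi\|_{k,r}$ with vanishing constant term.

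\emph{Well-definedness.} Since $\chi(0)=0$ and $\|\chi\|_{1,r}<\eta$, the mean value inequality applied componentwise along segments from the origin gives $\|\chi\|_{0,r}\le r\|\chi\|_{1,r}<r\eta$ (with the norm conventions of \cite{conn,monnierzung2004}, for which no extra dimensional factor intervenes). Hence $(\mathrm{id}+\chi)(z)\in B_{r(1+\eta)}$ for every $z\in B_r$, so $f\circ(\mathrm{id}+\chi)$ is defined on $B_r$ and, being a composition of smooth maps, is smooth there.

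\emph{Chain rule and bookkeeping of the terms.} Fix a multi-index $\alpha$ with $|\alpha|\le k$ and a component $f_a$ of $f$. Iterating the chain rule, $D^\alpha\big(f_a\circ(\mathrm{id}+\chi)\big)$ is a finite sum, with combinatorial coefficients depending only on $k$ and $n$, of terms of the form
\begin{equation}
\big((D^\beta f_a)\circ(\mathrm{id}+\chi)\big)\cdot\prod_{l=1}^{m}D^{\gamma_l}(\mathrm{id}+\chi)_{j_l},
\end{equation}
where $1\le m=|\beta|\le|\alpha|$, each $\gamma_l\ne 0$, and $\sum_l|\gamma_l|=|\alpha|$. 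One isolates the single term with $m=|\alpha|$ in which each inner factor is taken to be the constant first-order derivative of $\mathrm{id}$; it equals $(D^\alpha f_a)\circ(\mathrm{id}+\chi)$, bounded by $\|f\|_{k,r(1+\eta)}$ by the previous paragraph. Every remaining term carries at least one factor $D^{\gamma_l}\chi_{j_l}$ with $1\le|\gamma_l|\le k$ (using that all derivatives of $\mathrm{id}$ of order $\ge 2$ vanish, while first-order inner factors are $\delta_{ij}+\partial_j\chi_i$, themselves bounded by $1+\|\chi\|_{1,r}$); since the $f$-derivative there has order $\le k$ and there are at most $|\alpha|\le k$ inner factors, each such term is at most $\|f\|_{k,r(1+\eta)}$ times a monomial in $\|\chi\|_{k,r}$ of degree between $1$ and $k$.

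\emph{Conclusion.} Summing the finitely many terms, then taking suprema over $z\in B_r$, over $|\alpha|\le k$ and over the components $f_a$, yields $\|f\circ(\mathrm{id}+\chi)\|_{k,r}\le\|f\|_{k,r(1+\eta)}\big(1+P_k(\|\chi\|_{k,r})\big)$, where $P_k$ has nonnegative coefficients, vanishing constant term, degree at most $k$, and is independent of $f$ and $\chi$. The main point requiring care is the middle step: one must check simultaneously that the argument $(\mathrm{id}+\chi)(z)$ stays inside $B_{r(1+\eta)}$, that every $f$-derivative occurring has order $\le k$ (so that it is controlled by $\|f\|_{k,r(1+\eta)}$), and that every non-leading term genuinely carries between $1$ and $k$ derivatives of $\chi$ — these are exactly the facts forcing the right-hand side to have the stated shape, and all of them are immediate from Fa\`a di Bruno's formula together with the vanishing of the higher derivatives of the identity map.
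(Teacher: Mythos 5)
The paper does not actually prove this lemma: it is explicitly \emph{recalled} from \cite{conn} and \cite{monnierzung2004}, so there is no internal proof to compare against. Your argument is the standard one found in those references (the multivariate chain rule / Fa\`{a} di Bruno, organized as an induction on $k$ if one prefers), and it is essentially correct: isolating the single leading term $(D^\alpha f_a)\circ(\mathrm{id}+\chi)$, bounded by $\|f\|_{k,r(1+\eta)}$, and observing that every remaining term carries an outer derivative of order at most $k$, at most $k$ inner factors each bounded by $1+\|\chi\|_{1,r}$ or $\|\chi\|_{k,r}$, and at least one genuine derivative of $\chi$, yields exactly the claimed form $\|f\|_{k,r(1+\eta)}\bigl(1+P_k(\|\chi\|_{k,r})\bigr)$ with $P_k$ of degree at most $k$ and vanishing constant term.

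The one point you gloss over is the mean-value step $\|\chi\|_{0,r}\le r\|\chi\|_{1,r}$. With the norms used in this paper (suprema over components and over individual partial derivatives, on Euclidean balls), one only gets $|\chi_i(z)|\le \|\chi\|_{1,r}\sum_j|z_j|$, so the bound on $|\chi(z)|$ picks up a factor depending on $n$, and the hypothesis $\|\chi\|_{1,r}<\eta$ does not literally force $(\mathrm{id}+\chi)(B_r)\subset B_{r(1+\eta)}$ unless the constant is absorbed into the smallness condition (e.g.\ requiring $\|\chi\|_{1,r}<\eta/n$) or the ambient norm on $\bbR^n$ is chosen compatibly. You flag this and defer to the conventions of \cite{conn} and \cite{monnierzung2004}; that is acceptable, since the same normalization issue is already implicit in the statement as quoted and only affects inessential constants, but a fully self-contained proof would fix the convention explicitly. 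Everything else in your write-up is sound.
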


Moreover, writing for any $x$
\begin{equation}
f(x+\chi(x)+\zeta(x))-f(x+\chi(x))=\int_0^1
df(x+\chi(x)+t\zeta(x))\big( \zeta(x)\big) dt
\end{equation}
we get :

\begin{lem}
Let $r > 0$ and $0< \eta<1$ be two positive numbers. Consider three
smooth maps
$$f:B_{r(1+\eta)}\rightarrow\bbR^q\, , \, \chi {\mbox { and }} \xi:B_r\rightarrow \bbR^n$$
such that $\chi(0)=\xi(0)=0$ and $\|\chi\|_{1,r}+
\|\xi\|_{1,r}<\eta$. Then we have the estimate
\begin{equation}
\| f\circ (Id+\chi+\xi) - f\circ (Id+\chi) \|_{k,r} \leq \| f
\|_{k+1,r(1+\eta)} \| \xi \|_{k,r}
    R(\| \chi \|_{k,r},\| \xi \|_{k,r})\,,
\end{equation}
where $R$ is a polynomial in two variables (which is independent of
$f$, $\chi$ and $\xi$). \label{lem:diffcompbis}
\end{lem}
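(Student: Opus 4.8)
The plan is to reduce the whole estimate to Lemma~\ref{lem:composition} by means of the integral form of the mean value theorem already displayed just above the statement. Write $Id+\chi+t\xi$ for the map $x\mapsto x+\chi(x)+t\xi(x)$; for every $t\in[0,1]$ it satisfies $(\chi+t\xi)(0)=0$ and $\|\chi+t\xi\|_{1,r}\leq\|\chi\|_{1,r}+\|\xi\|_{1,r}<\eta$. Hence, for each $x\in B_r$,
\begin{equation}
f\big(x+\chi(x)+\xi(x)\big)-f\big(x+\chi(x)\big)=\int_0^1 df\big(x+\chi(x)+t\xi(x)\big)\big(\xi(x)\big)\,dt ,
\end{equation}
so that $f\circ(Id+\chi+\xi)-f\circ(Id+\chi)=\int_0^1 G_t\,dt$, where $G_t$ is the pointwise contraction of the vector-valued map $df\circ(Id+\chi+t\xi):B_r\to\mathrm{Hom}(\R^n,\R^q)\cong\R^{nq}$ with the map $\xi:B_r\to\R^n$.

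Next I would estimate $\|G_t\|_{k,r}$ uniformly in $t\in[0,1]$, using two ingredients. First, $df$ is a smooth map on $B_{r(1+\eta)}$ with $\|df\|_{k,r(1+\eta)}\leq\|f\|_{k+1,r(1+\eta)}$; since $\|\chi+t\xi\|_{1,r}<\eta$ and $(\chi+t\xi)(0)=0$, Lemma~\ref{lem:composition} applied to $df$ together with the monotonicity of $P_k$ and the bound $\|\chi+t\xi\|_{k,r}\leq\|\chi\|_{k,r}+\|\xi\|_{k,r}$ gives
\begin{equation}
\|df\circ(Id+\chi+t\xi)\|_{k,r}\leq\|f\|_{k+1,r(1+\eta)}\big(1+P_k(\|\chi\|_{k,r}+\|\xi\|_{k,r})\big).
\end{equation}
Second, the Leibniz rule applied to the bilinear contraction $(A,v)\mapsto A(v)$ produces a purely combinatorial constant $C_k>0$ (depending only on $k$ and $n$) such that $\|A\cdot v\|_{k,r}\leq C_k\|A\|_{k,r}\|v\|_{k,r}$ for smooth $A,v$ of the relevant types. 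Applying this with $A=df\circ(Id+\chi+t\xi)$ and $v=\xi$ yields
\begin{equation}
\|G_t\|_{k,r}\leq C_k\,\|f\|_{k+1,r(1+\eta)}\big(1+P_k(\|\chi\|_{k,r}+\|\xi\|_{k,r})\big)\,\|\xi\|_{k,r}.
\end{equation}

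Finally, since the integrand is smooth in $(t,x)$ on the compact set $[0,1]\times B_r$, one may differentiate under the integral sign and estimate $\big\|\int_0^1 G_t\,dt\big\|_{k,r}\leq\sup_{t\in[0,1]}\|G_t\|_{k,r}$, which produces exactly the asserted inequality with $R(a,b):=C_k\big(1+P_k(a+b)\big)$, a polynomial in two variables independent of $f$, $\chi$ and $\xi$ (depending on $k$ and $n$, just as $P_k$ does in Lemma~\ref{lem:composition}). I do not expect a genuine obstacle here; the only points that require care are treating $df$ as an $\R^{nq}$-valued map so that Lemma~\ref{lem:composition} applies verbatim, checking that $\chi+t\xi$ meets its hypotheses uniformly in $t$, and tracking the one-degree shift coming from differentiating $f$ together with the trivial bound $\|\xi\|_{0,r}\leq\|\xi\|_{k,r}$ used inside the product estimate.
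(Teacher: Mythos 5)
Your proposal is correct and follows exactly the route the paper intends: the paper's entire "proof" consists of displaying the integral mean-value identity $f(x+\chi(x)+\xi(x))-f(x+\chi(x))=\int_0^1 df(x+\chi(x)+t\xi(x))(\xi(x))\,dt$ and then stating the lemma, leaving the application of Lemma~\ref{lem:composition} to $df\circ(Id+\chi+t\xi)$ and the Leibniz-rule bookkeeping implicit. You have simply filled in those routine details (uniformity in $t$, the shift from $f$ to $df$, the bilinear contraction estimate), and your resulting polynomial $R(a,b)=C_k(1+P_k(a+b))$ is a valid choice.
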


In \cite{monnierzung2004} all the diffeomorphisms we used were of
type $Id+\chi$ where $\chi$ was directly defined by the background.
Here as we want diffeomorphims preserving a given Poisson structure,
we work with the flows of Hamiltonian vector fields (these vector
fields are \lq \lq naturally" defined by the context). Even if such
a diffeomorphism is of type $Id+\chi$ too, we only have information
about the vector field $X$ defining it. Lemma \ref{lem:estchi}
allows to use the estimates given above combined with the estimates
of $X$.

\begin{lem}
Let $r>0$ and $0<\varepsilon<1$ be two positive numbers. Consider a
smooth vector field $X$ on $B_{r+\varepsilon}$ vanishing at 0 and
$\phi^t$ its flow written $\phi^t=Id+\chi^t$ with $\chi^t(0)=0$.

a) If $\|X\|_{1,r+\varepsilon}<\varepsilon$ then for all
$t\in[0,1]$,
\begin{equation}
\|\chi^t\|_{1,r} \leq C \|X\|_{1,r+\varepsilon} \,,
\label{eqn:estphi}
\end{equation}
where $C$ is a positive constant independent of $X$ and $\chi^t$.

b) If $\|X\|_{1,r+\varepsilon}<\varepsilon$ then for all $t\in
[0,1]$ and all $L\geq 2$,
\begin{equation}
\|\chi^t\|_{L,r} \leq \|X\|_{L,r+\varepsilon}
P_L(\|X\|_{l,r+\varepsilon})\,, \label{eqn:estchiL}
\end{equation}
where $P_L$ is a polynomial independent of $X$ and $\chi^t$ with
positive coefficients, and $l=[\frac{L}{2}]+1$ (\, $[\;]$ denotes
the integer part). \label{lem:estchi}
\end{lem}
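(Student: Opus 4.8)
Throughout I would work with the integral form of the flow equation: writing $h^t:=\phi^t=Id+\chi^t$ and integrating the ODE $\tfrac{d}{dt}\phi^t(x)=X(\phi^t(x))$, one has $\chi^t(x)=\int_0^t X\big(x+\chi^s(x)\big)\,ds$ with $\chi^0=0$. The first step is to dispose of existence by a continuation argument: as long as the trajectory through $x\in B_r$ stays in $B_{r+\varepsilon}$, the integral formula gives $|\chi^t(x)|\le t\,\|X\|_{0,r+\varepsilon}\le\|X\|_{1,r+\varepsilon}<\varepsilon$, so $x+\chi^t(x)$ remains in the compact ball of radius $r+\|X\|_{1,r+\varepsilon}<r+\varepsilon$; hence $\phi^t$ is defined on $B_r$ for all $t\in[0,1]$ and already satisfies $\|\chi^t\|_{0,r}\le\|X\|_{1,r+\varepsilon}$.

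For part a) I would differentiate the integral formula once in $x$, namely $D\chi^t(x)=\int_0^t DX\big(x+\chi^s(x)\big)\big(I+D\chi^s(x)\big)\,ds$, and set $g(t):=\|D\chi^t\|_{0,r}$, $M:=C_n\|X\|_{1,r+\varepsilon}$ (with $C_n$ coming from the matrix--vector estimate). Then $g(t)\le M\int_0^t(1+g(s))\,ds$, so Gr\"onwall yields $1+g(t)\le e^{Mt}$, hence $g(t)\le Mt\,e^{Mt}\le M\,e^{M}$. Since $\|X\|_{1,r+\varepsilon}<\varepsilon<1$ forces $M<C_n$, this gives $\|D\chi^t\|_{0,r}\le C\,\|X\|_{1,r+\varepsilon}$ with $C$ depending only on $n$, and combining with the $C^0$ bound above proves $\|\chi^t\|_{1,r}\le C\,\|X\|_{1,r+\varepsilon}$.

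For part b) I would induct on $L$, part a) being the case $L=1$. Fix a multi-index $\alpha$, $|\alpha|=L+1\ge2$, and apply the Fa\`a di Bruno formula to $\partial^\alpha\big(X\circ(Id+\chi^s)\big)$. Exactly one family of terms carries a derivative of $\chi^s$ of the full order $L+1$, namely the ``linear'' term $DX(h^s)\big[\partial^\alpha\chi^s\big]$; every other term is, up to a combinatorial constant depending only on $L$ and $n$, of the form $(\partial^\beta X)(h^s)\cdot\prod_{j=1}^m\partial^{\delta_j}\chi^s$ with $1\le|\beta|\le L+1$ and $1\le|\delta_j|\le L$. The decisive bookkeeping observation is the excess-degree identity $(|\beta|-1)+\sum_{j=1}^m(|\delta_j|-1)=L$, so that at most one of the indices $|\beta|,|\delta_1|,\dots,|\delta_m|$ can exceed $l=[L/2]+1$. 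Estimating that exceptional factor (if present) by $\|X\|_{L+1,r+\varepsilon}$ and all other factors by $\|X\|_{l,r+\varepsilon}$ (monotonicity of the norms in the smoothness index), using the induction hypothesis $\|\chi^s\|_{|\delta_j|,r}\le\|X\|_{|\delta_j|,r+\varepsilon}\,P_{|\delta_j|}\big(\|X\|_{[|\delta_j|/2]+1,r+\varepsilon}\big)$ (whose polynomial argument is $\le\|X\|_{l,r+\varepsilon}$ because $|\delta_j|\le L$), and using $|h^s(x)|<r+\varepsilon$ to control the compositions $(\partial^\beta X)(h^s)$ by $\|X\|_{|\beta|,r+\varepsilon}$, one obtains $\|R^s\|_{0,r}\le\|X\|_{L+1,r+\varepsilon}\,P_{L+1}\big(\|X\|_{l,r+\varepsilon}\big)$ for the remainder $R^s:=\partial^\alpha(X\circ h^s)-DX(h^s)[\partial^\alpha\chi^s]$. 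Since $\partial^\alpha\chi^t=\int_0^t\big(DX(h^s)[\partial^\alpha\chi^s]+R^s\big)\,ds$, a final Gr\"onwall estimate (the coefficient being $\|DX(h^s)\|\le C_n\|X\|_{1,r+\varepsilon}<C_n$) gives $\|\partial^\alpha\chi^t\|_{0,r}\le C\,\|X\|_{L+1,r+\varepsilon}\,P_{L+1}\big(\|X\|_{l,r+\varepsilon}\big)$; taking the supremum over $|\alpha|=L+1$, $t\in[0,1]$, folding in the lower-order bounds from the induction hypothesis, and using $[L/2]+1\le[(L+1)/2]+1$ yields the claimed estimate at order $L+1$.

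The one genuinely delicate point is the combinatorics just indicated: isolating precisely the single Fa\`a di Bruno term that carries $\partial^\alpha\chi^s$, verifying the excess-degree identity, and checking that the ``at most one large factor'' phenomenon produces exactly the tame form demanded by the statement (the polynomial depending only on $\|X\|_{[L/2]+1,r+\varepsilon}$, which is the analogue here of the $+s$ shift appearing in the Nash--Moser estimates). Everything else --- the continuation argument, the two Gr\"onwall estimates, and the composition bounds --- is routine, the relevant composition estimates being available through Lemma \ref{lem:composition} and in \cite{conn}, \cite{monnierzung2004}.
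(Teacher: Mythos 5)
Your proposal is correct and follows essentially the same route as the paper: the integral form of the flow equation plus Gr\"onwall for part a), and for part b) an induction on $L$ using the Fa\`a di Bruno expansion of $\partial^\alpha(X\circ\phi^s)$, isolating the single term carrying $\partial^\alpha\chi^s$ for a final Gr\"onwall step and controlling the remainder by the ``at most one factor of order exceeding $l$'' bookkeeping, which is exactly the paper's argument about the indices $|\gamma_i|$ in $A_\beta(\phi^\tau)$. The only cosmetic differences are that you make the continuation/existence argument explicit and start the induction at $L=1$ rather than treating $L=2$ as a separate base case.
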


\begin{proof}
{\it a)} If $x$ is in $B_r$ and $t\in [0,1]$, then we can write
\begin{equation}
\chi^t(x) = \phi^t (x) - x =\int_0^t X(\phi^\tau (x))d\tau\,.
\label{eqn:expressionchi01}
\end{equation}
It is clear, since $\|X\|_{1,r+\varepsilon}<\varepsilon$, that
(\ref{eqn:expressionchi01}) gives
\begin{equation}
\|\chi^t\|_{0,r} \leq \|X\|_{0,r+\varepsilon} \leq \|X\|_{1,r+\varepsilon} \,.
\end{equation}

Now for $i$ and $j$ in $\{1,\ldots n\}$ we have
\begin{eqnarray}
\Big| \frac{\partial \chi_i^t}{\partial x_j}(x)\Big| & \leq &
\int_0^t \sum_{k=1}^n \Big| \frac{\partial X_i}{\partial
x_k}(\phi^\tau(x))\Big| \Big| \frac{\partial
\phi_k^\tau}{\partial x_j}(x) \Big| d\tau \\
 & \leq &
 \int_0^t \sum_{k=1}^n \Big| \frac{\partial X_i}{\partial
x_k}(\phi^\tau(x))\Big| \Big| \frac{\partial \chi_k^\tau}{\partial
x_j}(x) \Big| d\tau + \int_0^t \Big| \frac{\partial X_i}{\partial
x_j}(\phi^\tau(x))\Big| d\tau \nonumber \\
 & \leq & \|X\|_{1,r+\varepsilon} \int_0^t \sum_{k=1}^n \Big| \frac{\partial
\chi_k^\tau}{\partial x_j}(x) \Big| d\tau  + \|X\|_{1,r+\varepsilon}
\nonumber \,,
\end{eqnarray}
which gives,
\begin{equation}
\sum_{k=1}^n \Big| \frac{\partial \chi_k^t}{\partial x_j}(x) \Big| \leq
n\|X\|_{1,r+\varepsilon} \int_0^t \sum_{k=1}^n \Big| \frac{\partial
\chi_k^\tau}{\partial x_j}(x) \Big| d\tau  +
n\|X\|_{1,r+\varepsilon}\,.
\end{equation}

Now, Gronwall's Lemma gives
\begin{equation}
\sum_{i=1}^n  \Big| \frac{\partial \chi_i^t}{\partial x_j}(x)\Big|
\leq \big( n  e^{n \|X\|_{1,r+\varepsilon} t} \big) \|X\|_{1,r+\varepsilon}\,,
\end{equation}
for all $t\in [0\,;\,1]$. Using the condition
$\|X\|_{1,r+\varepsilon}<\varepsilon$, the point {\it a)} follows (with $C=ne^{n\varepsilon}$).

{\it b)}  To prove this point we write again the trivial relation
\begin{equation}
\chi^t (x)=\phi^t(x)-x=\int_0^t X(\phi^\tau (x))d\tau\,,
\label{eqn:expressionchi0}
\end{equation}
and then use an induction on $L$. We first prove that the inequality
holds for $L=2$. If $x$ in $B_r$, we write
\begin{eqnarray}
\frac{\partial^2 \chi_i^t}{\partial x_k\partial x_j}(x) & = &
\int_0^t \sum_{u,v=1}^n \frac{\partial^2 X_i}{\partial x_v\partial
x_u}(\phi^\tau (x)) \frac{\partial \phi_u^\tau}{\partial x_j} (x)
\frac{\partial \phi_v^\tau}{\partial x_k} (x)\, d\tau \nonumber \\
& & + \int_0^t \sum_{u=1}^n \frac{\partial X_i}{\partial
x_u}(\phi^\tau (x)) \frac{\partial^2 \phi_u^\tau}{\partial x_k
\partial x_j} (x)\, d\tau\,. \label{eqn:expressionchi2}
\end{eqnarray}
It gives by the point {\it a)} and the hypothesis $\| X
\|_{1,r+\varepsilon} < \varepsilon$,

\begin{equation}
\Big|\frac{\partial^2 \chi_i^t}{\partial x_k\partial x_j}(x) \Big|
\leq \|X\|_{2,r+\varepsilon} (1+C\varepsilon)^2 +
\|X\|_{1,r+\varepsilon} \int_0^t \sum_{u=1}^n \Big| \frac{\partial^2
\phi_u^\tau}{\partial x_k \partial x_j} (x)\Big| \, d\tau\,,
\label{eqn:lemestchi2}
\end{equation}
Note that in this inequality, we have in fact $\frac{\partial^2
\phi_u^\tau}{\partial x_k \partial x_j}= \frac{\partial^2
\chi_u^\tau}{\partial x_k \partial x_j}$. In the same way as in the
proof of {\it a)}, summing these estimates and using the Gronwall
Lemma, we obtain
\begin{equation}
\sum_{u=1}^n \Big| \frac{\partial^2 \phi_u^t}{\partial x_k
\partial x_j} (x)\Big| \leq  n(1+C\varepsilon)^2 \|X\|_{2,r+\varepsilon} e^{n\|X\|_{1,r+\varepsilon}  t}
\leq a \|X\|_{2,r+\varepsilon} \,,
\end{equation}
where $a$ is a positive constant. We then get the expected
inequality for $L=2$.

Now, suppose that $L>2$ and that the estimates (\ref{eqn:estchiL})
are satisfied for all $k=2,\ldots, L-1$. If $\alpha$ is a multiindex
in $\mathbb{N}^n$ with $|\alpha|=L$ ($|\alpha|$ is the sum of the
components of $\alpha$), we have for all $x$ in $B_r$ and $i$ in
$\{1,\ldots, n\}$,
\begin{equation}
\frac{\partial^{|\alpha|} \chi_i^t}{\partial x^\alpha} (x)= \int_0^t
\frac{\partial^{|\alpha|} (X_i\circ\phi^\tau)}{\partial x^\alpha}
(x) d\tau\,.
\end{equation}
It is easy to show that
\begin{equation}
\frac{\partial^{|\alpha|} (X_i\circ\phi^\tau)}{\partial
x^{\alpha}}=\sum_{1\leq|\beta|\leq L}\big(\frac{\partial^{|\beta|}
X_i}{\partial x^\beta}\circ\phi^u \big)A_\beta(\phi^\tau)\,,
\label{eqn:ecriturecomposee}
\end{equation}
where $A_\beta(\phi^u)$ is of the type
\begin{equation}\label{eqn:Aalpha}
A_\beta(\phi^\tau)= \sum_{\begin{array}{cc}
{\scriptstyle 1\leq m_i\leq n\,,\, |\gamma_i|\geq 1}\\
{\scriptstyle |\gamma_1|+\hdots+|\gamma_{|\beta|}|=L}
\end{array}}
a_{\gamma m}\frac{\partial^{|\gamma_1|}\phi^\tau_{m_1}}{\partial
x^{\gamma_1}}\hdots\frac{\partial^{|\gamma_{|\beta|}|}\phi^\tau_{m_{\beta}}}{\partial
x^{\gamma_{|\beta|}}}
\end{equation}
where $\phi^\tau_{m_1}$ is the $m_1$-component of $\phi^\tau$ and
the $a_{\gamma m}$ are nonnegative integers.

If $l=[\frac{L}{2}]+1$ then we can write
\begin{eqnarray}
\frac{\partial^{|\alpha|} (X_i\circ\phi^\tau)}{\partial x^{\alpha}}
& = & \sum_{l<|\beta|\leq L}\big(\frac{\partial^{|\beta|}
X_i}{\partial x^\beta}\circ\phi^\tau\big) A_\beta(\phi^\tau)+
\sum_{2\leq |\beta|\leq l }\big(\frac{\partial^{|\beta|} X_i}
{\partial x^\beta}\circ\phi^\tau\big) A_\beta(\phi^\tau)  \nonumber \\
 & & + \sum_{j=1}^n \big(\frac{\partial X_i}{\partial x_j}\circ\phi^\tau\big)  \frac{\partial^{|\alpha|}
\chi_j^\tau}{\partial x^\alpha} \,.
\end{eqnarray}
 When $l<|\beta| \leq L$, all the $|\gamma_i|$ in the sum
(\ref{eqn:Aalpha}) defining $A_\beta(\phi^\tau)$ are smaller than
$l$. On the other hand, when $1< |\beta| \leq l$, then in each
product in the expression (\ref{eqn:Aalpha}) of $A_\beta(\phi^\tau)$
there is at most one factor
$\frac{\partial^{|\gamma|}\phi^u_m}{\partial x^{\gamma}}$ with
$L>|\gamma|>l$ (the others have $|\gamma|\leq l$). Therefore, using
the point {\it a)} with the hypothesis $\| X \|_{1,r+\varepsilon}
<\varepsilon $ and the induction hypothese we obtain, for all $x$
in $B_r$,
\begin{equation}
\Big| \frac{\partial^{|\alpha|} \chi_i^t}{\partial x^\alpha} (x)
\Big| \leq \|X\|_{L,r+\varepsilon} q(\|X\|_{l,r+\varepsilon}) +
\| X \|_{1,r+\varepsilon} \int_0^t \sum_{j=1}^n \Big| \frac{\partial^{|\alpha|}
\chi_j^\tau}{\partial x^\alpha} (x) \Big| d\tau\,,
\end{equation}
where $q$ is a polynomial independent of $X$ and $\chi^t$ with
positive coefficients. We then conclude as in the case $L=2$ (via
Gronwall's Lemma).
\end{proof}

Finally, we show the following result corresponding to inequality
(\ref{eqn:estimate-Exp-bis}).
\begin{lem}
Let $r>0$ and $0<\eta<1$ be two positive numbers. There exists a real number $\alpha>0$ such that
if $f$, $g_1$ and $g_2$ are three smooth functions on $B_{r(1+\eta)}$ verifying
$\|g_1\|_{2,r(1+\eta)}<\alpha \eta$  and $\|g_2\|_{2,r(1+\eta)}<\alpha \eta$ and if we
denote by $\phi_{1}$ (resp. $\phi_{2}$) the time-1 flow of the
Hamiltonian vector field $X_{g_1}$ (resp. $X_{g_2}$) of the function
$g_1$ (resp. $g_2$) with respect to a Poisson structure, then we
have the estimate :
\begin{eqnarray*}
\| f\circ \phi_{1} - f\circ \phi_{2} \|_{k,r} & \leq &
\|g_1 - g_2\|_{k+1,r(1+\eta)}  \| f \|_{k+1,r(1+\eta)} P(\|g_1\|_{k+1,r(1+\eta)}) \\
 & & \quad + \| f \|_{k+2,r(1+\eta)} R_{(2)}(\|g_1\|_{k+2,r(1+\eta)}, \|g_1\|_{k+2,r(1+\eta)})
\end{eqnarray*}
where $P$ and $R_{(2)}$ are polynomials with real positive coefficients (independent of $f$, $g_1$
and $g_2$) and
$R_{(2)}$ contains only terms of degree greater or equal to two.
\label{lem:estimate-Exp-bis}
\end{lem}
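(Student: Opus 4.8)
The plan is to Taylor-expand each Hamiltonian flow to first order in the time variable. Writing $\phi_i=Id+\chi_i$ for the time-$1$ flow of $X_{g_i}$, this decomposes $f\circ\phi_1-f\circ\phi_2$ into a term that is \emph{linear} in $g_1-g_2$ (the leading term of the expansion, which will feed the $\|g_1-g_2\|_{k+1}\|f\|_{k+1}$-term of the estimate) and a remainder that is intrinsically \emph{quadratic} in each $g_i$ (which will feed the $\|f\|_{k+2}R_{(2)}$-term). The estimates of the two pieces then follow from the Leibniz rule for the Poisson bracket together with Lemma \ref{lem:composition} and Lemma \ref{lem:estchi}; the only delicate point is bookkeeping, done uniformly in $t\in[0,1]$.

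First I would fix the smooth Poisson structure $\Pi$ on $B_{r(1+\eta)}$ and record that $X_g=\sum_j\big(\sum_l\Pi^{jl}\,\partial g/\partial x_l\big)\,\partial/\partial x_j$, so that $\|X_g\|_{L,\rho}\leq C_{L,\rho}\|g\|_{L+1,\rho}$ for every $L$ and every $\rho\leq r(1+\eta)$, with $C_{L,\rho}$ depending only on $\Pi$, $L$ and $\rho$. Put $\varepsilon:=\tfrac12 r\eta$. Choosing $\alpha>0$ small enough (depending only on $r$, $\eta$ and $\Pi$), the hypothesis $\|g_i\|_{2,r(1+\eta)}<\alpha\eta$ forces $\|X_{g_i}\|_{1,r+\varepsilon}<\varepsilon$; hence the flows $\phi_i^t=Id+\chi_i^t$ of $X_{g_i}$ are defined on $B_r$ for all $t\in[0,1]$, they map $B_r$ into $B_{r+\varepsilon}\subset B_{r(1+\eta)}$, and by Lemma \ref{lem:estchi} a) they satisfy $\|\chi_i^t\|_{1,r}\leq C\|X_{g_i}\|_{1,r+\varepsilon}$, which, after shrinking $\alpha$ if needed, stays below the threshold making Lemma \ref{lem:composition} applicable to $f\circ\phi_i^t$ at every $t\in[0,1]$.

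Next I would use that $t\mapsto f\circ\phi_i^t$ has first two time-derivatives $(X_{g_i}f)\circ\phi_i^t$ and $(X_{g_i}^2f)\circ\phi_i^t$, so Taylor's formula with integral remainder at $t=0$ gives $f\circ\phi_i=f+X_{g_i}f+R_i$ with $R_i:=\int_0^1(1-t)\,(X_{g_i}^2f)\circ\phi_i^t\,dt$. Since $X_{g_1}-X_{g_2}=X_{g_1-g_2}$, $X_hf=\{h,f\}$, and $X_{g_i}^2f=\{g_i,\{g_i,f\}\}$ — an expression homogeneous of degree $2$ in $g_i$ — subtracting the two identities yields
\begin{equation*}
f\circ\phi_1-f\circ\phi_2=\{g_1-g_2,f\}+R_1-R_2 .
\end{equation*}

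Then I would estimate the three summands. For the first, the Leibniz rule gives $\|\{g_1-g_2,f\}\|_{k,r}\leq\|\{g_1-g_2,f\}\|_{k,r(1+\eta)}\leq C_k\|g_1-g_2\|_{k+1,r(1+\eta)}\|f\|_{k+1,r(1+\eta)}$, i.e.\ the first required term with $P$ a constant polynomial. For $R_i$, applying the Leibniz rule twice yields $\|\{g_i,\{g_i,f\}\}\|_{k,r(1+\eta)}\leq C_k\|g_i\|_{k+1,r(1+\eta)}\|g_i\|_{k+2,r(1+\eta)}\|f\|_{k+2,r(1+\eta)}$, and then Lemma \ref{lem:composition} together with the bound $\|\chi_i^t\|_{k,r}\leq C\|g_i\|_{k+1,r(1+\eta)}\,\mathrm{Poly}(\|g_i\|_{[k/2]+2,r(1+\eta)})$ coming from Lemma \ref{lem:estchi} b) gives $\|R_i\|_{k,r}\leq\|X_{g_i}^2f\|_{k,r(1+\eta)}\big(1+P_k(\|\chi_i^t\|_{k,r})\big)\leq\|f\|_{k+2,r(1+\eta)}\,R^{(i)}_{(2)}(\|g_i\|_{k+2,r(1+\eta)})$, where $R^{(i)}_{(2)}$ has positive coefficients and only monomials of degree $\geq 2$ (the factor $\|g_i\|_{k+1}\|g_i\|_{k+2}$ is already quadratic, and multiplication by $1+P_k(\cdots)$ only raises the degree). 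Summing and setting $R_{(2)}(u,v):=R^{(1)}_{(2)}(u)+R^{(2)}_{(2)}(v)$ gives the claimed inequality. I expect no essential difficulty: the work is entirely the uniform-in-$t$ tracking of the three radii $r<r+\varepsilon<r(1+\eta)$ and of the smallness thresholds in Lemmas \ref{lem:composition} and \ref{lem:estchi}; the one structural point to keep in mind is that isolating the first-order Taylor term makes the dependence on $g_1-g_2$ exactly linear while the remainder $X_{g_i}^2f=\{g_i,\{g_i,f\}\}$ is automatically quadratic in $g_i$, so it lands in the $R_{(2)}$-term — which is precisely what is needed in the proof of Lemma \ref{lem:NM1}.
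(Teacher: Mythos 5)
Your proof is correct, and it follows a genuinely different (though closely related) decomposition from the paper's. The paper writes $f\circ\phi_1-f\circ\phi_2$ via the fundamental theorem of calculus along each flow, splits $\{g_1,f\}=\{g_1-g_2,f\}+\{g_2,f\}$, and then applies the same integral identity a second time to the difference $\{g_2,f\}\circ\phi_1^t-\{g_2,f\}\circ\phi_2^t$; this leaves the leading term $\int_0^1\{g_1-g_2,f\}\circ\phi_1^t\,dt$ (still composed with the flow, which is exactly why the factor $P(\|g_1\|_{k+1,r(1+\eta)})$ appears in the statement) plus two double-integral remainders involving $\{g_1,\{g_2,f\}\}$ and $\{g_2,\{g_2,f\}\}$. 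You instead Taylor-expand each $f\circ\phi_i^t$ to second order at $t=0$ with integral remainder, which makes the leading term exactly $\{g_1-g_2,f\}$ with no flow composition (so your $P$ can be taken constant, a slightly sharper form of the same estimate) and produces the diagonal remainders $\int_0^1(1-t)\{g_i,\{g_i,f\}\}\circ\phi_i^t\,dt$ in place of the paper's mixed ones. From that point on the two arguments coincide: both rest on the Leibniz-type bound $\|\{u,v\}\|_{k,\rho}\le C_k\|u\|_{k+1,\rho}\|v\|_{k+1,\rho}$ coming from the fixed Poisson tensor, on Lemma \ref{lem:composition} to absorb the composition with $\phi_i^t$ at the cost of $1+P_k(\|\chi_i^t\|_{k,r})$, and on Lemma \ref{lem:estchi} to convert $\|\chi_i^t\|_{k,r}$ into a polynomial in $\|g_i\|_{k+2,r(1+\eta)}$ without constant term, with $\alpha$ chosen small enough that the smallness hypotheses of those two lemmas hold uniformly in $t\in[0,1]$; your tracking of the radii $r<r+\varepsilon<r(1+\eta)$ and of the degree-$\ge 2$ condition on $R_{(2)}$ matches what the paper does.
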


\begin{proof}
Let $x$ be an element of the closed ball $B_r$. If $\phi^t_1$ and
$\phi^t_2$ design the flows of the vector fields $X_{g_1}$ and
$X_{g_2}$, we can write:
\begin{eqnarray}
f\big(\phi_1(x)\big) - f\big(\phi_2(x)\big) & = & f\big(\phi_1(x)\big) - f(x) + f(x) - f\big(\phi_2(x)\big)\nonumber \\
 & = & \int_0^1 \big( X_{g_1}.f \big)\circ \phi^t_1(x)\,dt - \int_0^1 \big( X_{g_2}.f \big)\circ \phi^t_2(x)\,dt
 \\
 & = & \int_0^1 \{g_1\,,\,f\} \circ \phi^t_1(x)\,dt - \int_0^1 \{g_2\,,\,f\}\circ \phi^t_2(x)\,dt \nonumber \\
 & = & \int_0^1 \{g_1-g_2\,,\,f\} \circ \phi^t_1(x)\,dt + \int_0^1 \{g_2\,,\,f\}\circ \phi^t_1(x)\,dt \nonumber\\
 & & \quad - \int_0^1 \{g_2\,,\,f\}\circ \phi^t_2(x)\,dt\,. \nonumber
 \end{eqnarray}
Now, using the same argument as above with $\{g_2\,,\,f\}$ instead of $f$, we get
\begin{eqnarray}
f\big(\phi_1(x)\big) - f\big(\phi_2(x)\big) & = & \int_0^1 \{g_1-g_2\,,\,f\} \circ \phi^t_1(x)\,dt \\
 & & \quad + \int_0^1
\Big( \int_0^t \{g_1\,,\,\{g_2\,,\,f\}\}\circ \phi^{\tau}_1(x)\,d\tau \Big)dt  \nonumber \\
&  & \quad - \int_0^1 \Big( \int_0^t \{g_2\,,\,\{g_2\,,\,f\}\}\circ
\phi^{\tau}_2(x)\,d\tau \Big)dt\,.\nonumber
 \end{eqnarray}

Now, we choose the real number $\alpha>0$ in order to make sure that we can apply
correctly Lemma \ref{lem:estchi} and Lemma \ref{lem:composition}
which depend on small conditions.

Finally, applying Lemmas \ref{lem:composition} and \ref{lem:estchi},
we then obtain the result (remark that we have, for example, $\|
\{g_2\,,\,f\} \|_{k,r(1+\eta)} \leq C \| g_2\|_{k+1,r(1+\eta)} \|
f\|_{k+1,r(1+\eta)}$, and also $\| X_{g_1} \|_{k,r(1+\eta)} \leq M
\| g_1\|_{k+1,r(1+\eta)}$ where $C$ and $M$ are positive constants
independent of $f$, $g_1$ and $g_2$).
\end{proof}

\subsection{Momentum maps}
Consider a momentum map $\lambda : M\longrightarrow
\mathfrak{g}^{\ast}$ with respect to the
 Poisson structure $\Pi$. We saw in section \ref{Chevalley-Eilenberg} that we can associate to $\lambda$ a Chevalley-Eilenberg complex
$C^\bullet (\mathfrak{g},C^\infty(M))$, with differential $\delta$,
and homotopy operator $h$. If $\mu$ is another momentum map with
respect to the same Poisson structure then we can see the difference
$\mu-\lambda$ as an 1-cochain in the complex. We then define
$\phi^t=Id+\chi^t$ the flow of the Hamiltonian vector field
$X_{h(\mu-\lambda)}$ with respect to the Poisson structure and
$\phi=\phi^ 1$ the time-1 flow.

\begin{lem}
Let $r>0$ and $0<\eta<1$ be two positive numbers. With the notations
above, we have the two following properties :
\begin{itemize}
\item[{\it a)}] For any positive integer $k$ we have
\begin{equation}
\| \delta (\mu-\lambda) \|_{k,r} \leq C \| \mu-\lambda
\|_{k+1,r}^2\,,
\end{equation}
where $C$ is a positive constant independent of $\mu$ and $\lambda$.
\item[{\it b)}] There exists a constant $\alpha>0$ such that if  $\| \mu-\lambda \|_{s+2,r(1+\eta)}<\alpha \eta$, then
we have, for any positive integer $k$ :
\begin{equation}
\| \mu\circ \phi - \lambda \|_{k,r} \leq \| \mu-\lambda
\|_{k+s+2,r(1+\eta)}^2 P(\| \mu-\lambda \|_{k+s+1,r(1+\eta)})
\end{equation}
where $P$ is a polynomial with positive coefficients, independent of
$\mu$ and $\lambda$.
\end{itemize}
\label{lem:quadraticerror}
\end{lem}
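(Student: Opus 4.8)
The plan is to reduce part a) to a purely algebraic identity --- namely that the Chevalley--Eilenberg differential $\delta(\mu-\lambda)$ is, up to sign, the quadratic expression $\{(\mu-\lambda)_\bullet,(\mu-\lambda)_\bullet\}$ --- and then to derive part b) from this together with the homotopy identity $\delta_0\circ h_0+h_1\circ\delta_1=\mathrm{id}$ and the composition and flow estimates established above (Lemmas \ref{lemmedeconn}, \ref{lem:composition}, \ref{lem:diffcompbis} and \ref{lem:estchi}). Throughout I write $\lambda_\xi=\xi\circ\lambda$, $\mu_\xi=\xi\circ\mu$, $\nu=\mu-\lambda$ and $\nu_\xi=\xi\circ\nu$, so that $\|\nu\|_{k,r}=\max_i\|\nu_{\xi_i}\|_{k,r}$ is the norm used in the statement.

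For part a), the point is that, since $\lambda$ and $\mu$ are both momentum maps for the \emph{same} Poisson structure $\Pi$ --- i.e. Poisson maps into $\fg^\ast$ equipped with its linear Lie--Poisson bracket --- we have $\{\lambda_\xi,\lambda_\eta\}=\lambda_{[\xi,\eta]}$ and $\{\mu_\xi,\mu_\eta\}=\mu_{[\xi,\eta]}$ for all $\xi,\eta\in\fg$. Plugging $\nu_\xi=\mu_\xi-\lambda_\xi$ into the explicit formula $\delta_1(\nu)(\xi\wedge\eta)=\{\lambda_\xi,\nu_\eta\}-\{\lambda_\eta,\nu_\xi\}-\nu_{[\xi,\eta]}$, expanding the brackets and using the two identities above, I expect every term linear in $\nu$ to cancel, leaving $\delta_1(\mu-\lambda)(\xi\wedge\eta)=-\{\nu_\xi,\nu_\eta\}$. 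The estimate of a) then follows at once: the Poisson bracket has fixed smooth coefficients on $B_R$ and loses one derivative, so $\|\{f,g\}\|_{k,r}\leq C_k\|f\|_{k+1,r}\|g\|_{k+1,r}$ for $r\leq R$ with $C_k$ independent of $f,g$; evaluating $\delta_1(\mu-\lambda)$ on the basis bivectors $\xi_i\wedge\xi_j$ and taking maxima gives $\|\delta(\mu-\lambda)\|_{k,r}\leq C\|\mu-\lambda\|_{k+1,r}^2$.

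For part b), set $g:=h_0(\mu-\lambda)$ and $Q_i:=\bigl(h_1(\delta_1(\mu-\lambda))\bigr)(\xi_i)$. Evaluating the homotopy identity on the $1$-cochain $\mu-\lambda$ at $\xi_i$, and using $\delta_0(g)(\xi_i)=\rho_{\xi_i}(g)=\{\lambda_i,g\}$, gives $\mu_i-\lambda_i=-\{g,\lambda_i\}+Q_i$; by a) and Lemma \ref{lemmedeconn} the correction already satisfies $\|Q_i\|_{k,r}\leq C_k\|\delta_1(\mu-\lambda)\|_{k+s,r}\leq C_k'\|\mu-\lambda\|_{k+s+1,r}^2$. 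Let $\phi^t=\mathrm{Id}+\chi^t$ be the flow of $X_g=\{g,\,\cdot\,\}$ and $\phi=\phi^1$. From $\frac{d}{dt}(\mu_i\circ\phi^t)=\{g,\mu_i\}\circ\phi^t$ I would write $\mu_i\circ\phi-\lambda_i=(\mu_i-\lambda_i)+\{g,\mu_i\}+\int_0^1\bigl(\{g,\mu_i\}\circ\phi^t-\{g,\mu_i\}\bigr)\,dt$, then expand $\{g,\mu_i\}=\{g,\lambda_i\}+\{g,\nu_i\}$ and substitute the expression for $\mu_i-\lambda_i$: the linear terms $\{g,\lambda_i\}$ cancel and one is left with $\mu_i\circ\phi-\lambda_i=Q_i+\{g,\nu_i\}+\int_0^1\bigl(\{g,\mu_i\}\circ\phi^t-\{g,\mu_i\}\bigr)\,dt$, a sum of three manifestly quadratic terms in $\mu-\lambda$.

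It then remains to estimate these three terms, and this is where the smallness hypothesis is used: for $\alpha$ small enough (depending on $r$, $\Pi$, $\lambda$, $s$), the bound $\|\mu-\lambda\|_{s+2,r(1+\eta)}<\alpha\eta$ forces $\|X_g\|_{1,r(1+\eta)}\leq C\|g\|_{2,r(1+\eta)}\leq C'\|\mu-\lambda\|_{s+2,r(1+\eta)}$ to be as small as required for Lemmas \ref{lem:composition}, \ref{lem:diffcompbis}, \ref{lem:estchi} to apply and for $\phi^t$ to map $B_r$ into the domain $B_{r(1+\eta)}$ of $\mu$ for all $t\in[0,1]$. Then $Q_i$ is already estimated; $\{g,\nu_i\}$ is bounded via Lemma \ref{lemmedeconn} by $C\|g\|_{k+1,r}\|\nu_i\|_{k+1,r}\leq C'\|\mu-\lambda\|_{k+s+1,r}^2$; and for the integral term Lemma \ref{lem:diffcompbis} (with $\chi=0$, $\xi=\chi^t$) gives a bound by $\|\{g,\mu_i\}\|_{k+1,r(1+\eta)}\|\chi^t\|_{k,r}R(\|\chi^t\|_{k,r})$, into which one feeds $\|\{g,\mu_i\}\|_{k+1,r(1+\eta)}\leq C\|g\|_{k+2}\|\mu_i\|_{k+2}\leq C'\|\mu-\lambda\|_{k+s+2,r(1+\eta)}\bigl(1+\|\mu-\lambda\|_{k+s+1,r(1+\eta)}\bigr)$ (splitting $\mu_i=\lambda_i+\nu_i$, absorbing $\|\lambda_i\|$ into the constants and using $s\geq1$) and $\|\chi^t\|_{k,r}\leq\|X_g\|_{k,r(1+\eta)}P_k(\|X_g\|_{[k/2]+1,r(1+\eta)})\leq C\|\mu-\lambda\|_{k+s+1,r(1+\eta)}P_k(\cdots)$ from Lemma \ref{lem:estchi}; multiplying these yields exactly the claimed bound $\|\mu-\lambda\|_{k+s+2,r(1+\eta)}^2\,P(\|\mu-\lambda\|_{k+s+1,r(1+\eta)})$, and summing over $i$ finishes b). The only genuinely substantive point is the algebraic identity of part a): it is precisely the fact that the same $\Pi$ makes both $\lambda$ and $\mu$ momentum maps that renders $\delta(\mu-\lambda)$, and hence $\mu\circ\phi-\lambda$ after the homotopy correction, quadratic; everything afterwards is bookkeeping --- keeping track of which terms are linear and must cancel, and matching the exact loss of $s+2$ derivatives through the Appendix 2 estimates --- together with the minor chore of checking the radius and smallness conditions that those estimates require.
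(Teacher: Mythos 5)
Your proof is correct and its skeleton is the paper's: part a) is word-for-word the paper's argument (expand $\delta_1(\mu-\lambda)(\xi_i\wedge\xi_j)$, use that both $\lambda$ and $\mu$ send $\{\,,\,\}$ to the Lie--Poisson bracket so the linear terms cancel, and read off $\delta(\mu-\lambda)(\xi_i\wedge\xi_j)=-\{(\mu-\lambda)_i,(\mu-\lambda)_j\}$, whence the quadratic bound with a constant proportional to $\|\Pi\|_{k,r}$), and part b) rests on the same two ingredients, namely the homotopy identity $\mu-\lambda=\delta h(\mu-\lambda)+h(\delta(\mu-\lambda))$ and the flow of $X_{h(\mu-\lambda)}$. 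Where you genuinely diverge is in how the linear term is cancelled in b). The paper writes $\mu_i\circ\phi-\lambda_i=(\mu-\lambda)_i\circ\phi+(\lambda_i\circ\phi-\lambda_i)$, turns the second piece into $-\int_0^1\delta h(\mu-\lambda)_i\circ\phi^t\,dt$, and lets it absorb the first up to the double integral $\int_0^1\!\int_t^1\{h(\mu-\lambda),(\mu-\lambda)_i\}\circ\phi^\tau\,d\tau\,dt$; every surviving term is then a product of two factors each controlled by $\|\mu-\lambda\|$, composed with the flow, so only Lemma \ref{lem:composition} is needed. You instead Taylor-expand $\mu_i\circ\phi^t$ at $t=0$ and cancel $\{g,\lambda_i\}$ against $\mu_i-\lambda_i=-\{g,\lambda_i\}+Q_i$, leaving the remainder $\int_0^1(\{g,\mu_i\}\circ\phi^t-\{g,\mu_i\})\,dt$. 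That remainder is only first order in $\mu-\lambda$ through $\{g,\mu_i\}$ (since $\mu_i$ itself is not small), and becomes quadratic only because the difference of compositions contributes the extra factor $\|\chi^t\|_{k,r}$; this forces you to invoke Lemma \ref{lem:diffcompbis} rather than just Lemma \ref{lem:composition}, to spend one more derivative on $g$ (which is why you still land exactly on $k+s+2$), and to absorb $\|\lambda_i\|_{k+2,r(1+\eta)}$ into your constants, so your polynomial $P$ carries an explicit dependence on $\lambda$ that the paper's decomposition avoids. Since $\lambda$ is the fixed reference map in the application, this is harmless, and both routes yield the stated estimate under the same smallness hypothesis; the paper's arrangement is just marginally cleaner in keeping every factor a norm of $\mu-\lambda$.
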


\begin{proof}  Let us consider a basis
$\{\xi_1,\hdots,\xi_n\}$ of the Lie algebra $\mathfrak{g}$ and the
real numbers $c_{ij}^p$ defined by $[\xi_i\,,\,\xi_j]=\sum_{p=1}^n
c_{ij}^p \xi_p$. In this proof, we adopt for instance the notation
$\lambda_i$, for $\lambda(\xi_i)$ or $\xi_i\circ\lambda$.

We first prove the point {\it a)}. In order to simplify, we denote
by $f=\mu-\lambda$. By definition of the differential $\delta$ (see
section \ref{Chevalley-Eilenberg}), we have :
\begin{eqnarray}
\delta f (\xi_i\wedge\xi_j) & = & \xi_i . f(\xi_j) - \xi_j .
f(\xi_i) - f([\xi_i\,,\,\xi_j]) \\ \nonumber
           & = & \{ \lambda_i\,,\, f_j \} - \{ \lambda_j\,,\, f_i \}
                       -\sum_{p=1}^n c_{ij}^p f_p\,.
\end{eqnarray}
It allows us to write the following equality :
\begin{eqnarray}
\{f_i \,,\, f_j\} & = & \{\mu_i\,,\, \mu_j\} - \{\lambda_i\,,\,
\mu_j\} - \{\mu_i\,,\, \lambda_j\} +\{\lambda_i\,,\, \lambda_j\} \\
\nonumber & = & \{\mu_i\,,\, \mu_j\} - \delta f (\xi_i\wedge\xi_j) -
\sum_{p=1}^n c_{ij}^p f_p - \{\lambda_i\,,\, \lambda_j\}
\end{eqnarray}

Now, since $\lambda$ and $\mu$ are momentum maps, we have
\begin{equation}
\{\mu_i\,,\, \mu_j\}=[\xi_i\,,\,\xi_j]\circ\mu=\sum_{p=1}^n c_{ij}^p
\mu_p
\end{equation}
and also $\{\lambda_i\,,\, \lambda_j\}=\sum_{p=1}^n c_{ij}^p
\lambda_p$.

Therefore, we obtain :
\begin{equation}
\delta f_d (\xi_i\wedge\xi_j)  = - \{ f_i\,,\, f_j \} \,.
\end{equation}
Finally, we just write the following estimates :
\begin{equation}
\| \delta f \|_{k,r}  \leq  n(n-1) \| \Pi \|_{k,r} \| f
\|_{k+1,r}^2\,,
\end{equation}
where $\Pi$ is the Poisson structure considered.

Now, we prove the point {\it b)} of the lemma. Let $x$ be in the
closed ball $B_r$, we first write  :
\begin{equation}
\mu\circ\phi(x) - \lambda(x) = \mu\circ\phi(x) - \lambda\circ\phi(x)
+ \lambda\circ\phi(x) - \lambda(x)\,. \label{eqn1:quadraticerror}
\end{equation}
Now, by the definition of $\delta$, we have for each $i\in
\{1,\hdots,n\}$ :
\begin{eqnarray}
\lambda_i\circ\phi(x)-\lambda_i(x) & = & \int_0^1 \big( X_{h(\mu-\lambda)} . \lambda_i\big)\circ\phi^t(x)\, dt \\
 & = & \int_0^1 \{h(\mu-\lambda)\,,\,\lambda_i\}\circ\phi^t(x)\, dt \nonumber \\
 & = & -\int_0^1 \delta h(\mu-\lambda)_i \circ\phi^t(x)\, dt \,. \nonumber
\end{eqnarray}
We know (see Lemma \ref{lemmedeconn}) that
\begin{equation}
\mu-\lambda = \delta h(\mu-\lambda) + h(\delta (\mu-\lambda))\,,
\end{equation}
therefore, we get
\begin{equation}
\lambda_i\circ\phi(x)-\lambda_i(x) = -\int_0^1 (\mu-\lambda)_i\circ
\phi^t(x)\, dt + \int_0^1 h\big(\delta (\mu-\lambda)\big)_i\circ
\phi^t(x)\, dt \,.
\end{equation}
Finally, injecting this equality in (\ref{eqn1:quadraticerror}), we
get
\begin{eqnarray}
\mu_i\circ\phi(x) - \lambda_i(x) & = & (\mu - \lambda)_i\circ\phi(x)
-\int_0^1 (\mu-\lambda)_i\circ \phi^t(x)\, dt
\label{eqn2:quadraticerror} \\ \nonumber
& & \quad + \int_0^1 h\big(\delta (\mu-\lambda)\big)_i\circ \phi^t(x)\, dt \nonumber\\
 & = & \int_0^1 \Big( \int_t^1 \big( X_{h(\mu-\lambda)} . (\mu-\lambda)_i \big) \circ\phi^{\tau}(x)\,d\tau\Big) dt \nonumber \\
& & \quad
+ \int_0^1 h\big(\delta (\mu-\lambda)\big)_i\circ \phi^t(x)\, dt \nonumber \\
 & = & \int_0^1 \Big( \int_t^1 \{ h(\mu-\lambda) \,,\, (\mu-\lambda)_i \} \circ\phi^{\tau}(x)\,d\tau\Big) dt \nonumber \\
& & \quad + \int_0^1 h\big(\delta (\mu-\lambda)\big)_i\circ
\phi^t(x)\, dt \,.\nonumber
\end{eqnarray}

Now, the equality (\ref{eqn2:quadraticerror}) combined with Lemma
\ref{lemmedeconn}, Lemma \ref{lem:composition} and the point {\it
a)} of this lemma give :
\begin{equation}
\| \mu\circ\phi - \lambda \|_{k,r} \leq \| \mu - \lambda
\|_{k+s+2,r(1+\eta)}^2  P(\| \chi^t \|_{k,r})\,,
\end{equation}
where $P$ is a polynomial with positive coefficients and which does
not depend on $\mu$ and $\lambda$. Finally, we conclude using Lemma
\ref{lem:estchi}.

Note that we know, using Lemma \ref{lemmedeconn}, that $\| X_{h(\mu-\lambda)}
\|_{1,r(1+\eta)} \leq M \| \mu-\lambda \|_{2+s,r(1+\eta)}$ where $M$
is a positive constant independent of $\mu$ and $\lambda$. The
condition we gave in the statement of {\it b)} ($\| \mu-\lambda
\|_{s+2,r(1+\eta)}<\alpha \eta$) is to make sure that we can apply
correctly Lemma \ref{lem:estchi} and Lemma \ref{lem:composition}
which depend on small conditions.

\end{proof}

\end{document}